\begin{document}

\title{Low Rank Approximation of Dual Complex Matrices}
\author{ Liqun Qi\footnote{%
    Department of Applied Mathematics, The Hong Kong Polytechnic University, Hung Hom,
    Kowloon, Hong Kong;
    Department of Mathematics, School of Science, Hangzhou Dianzi University, Hangzhou 310018 China.
    ({\tt maqilq@polyu.edu.hk}).}
     \and  \
    David M. Alexander\footnote{The Institut de Neurosciences des Syst\`emes (INS, UMR1106), Faculty of Medicine, Aix-Marseille University, Marseille 13005, France. ({\tt david.murray.alexander@gmail.com}).}
    \and  \
    Zhongming Chen\footnote{%
    Department of Mathematics, School of Science, Hangzhou Dianzi University, Hangzhou 310018 China.
    ({\tt zmchen@hdu.edu.cn}).}
    \and  \
    Chen Ling\footnote{%
    Department of Mathematics, School of Science, Hangzhou Dianzi University, Hangzhou 310018 China.
    ({\tt macling@hdu.edu.cn}).}
    \and and \
    Ziyan Luo\footnote{Corresponding author, Department of Mathematics,
  Beijing Jiaotong University, Beijing 100044, China. ({\tt zyluo@bjtu.edu.cn}). This author's work was supported by Beijing Natural Science Foundation (Grant No.  Z190002).}
}
\date{\today}
\maketitle

\begin{abstract}
Dual complex numbers can represent rigid body motion in 2D spaces.  Dual complex matrices are linked with screw theory, and have potential applications in various areas.
In this paper, we study low rank approximation of dual complex matrices. We define $2$-norm for dual complex vectors, and Frobenius norm for dual complex matrices.  These norms are nonnegative dual numbers.   We establish the unitary invariance property of dual complex matrices.   We study eigenvalues of square dual complex matrices, and show that an $n \times n$ dual complex Hermitian matrix has exactly $n$ eigenvalues, which are dual numbers. We present a singular value decomposition (SVD) theorem  for dual complex matrices, define ranks and appreciable ranks for dual complex matrices, and study their properties.  We establish an Eckart-Young like theorem for dual complex matrices, and present an algorithm framework for low rank approximation of dual complex matrices via truncated SVD.  The SVD of dual complex matrices also provides a basic tool for Principal Component Analysis (PCA) via these matrices.  Numerical experiments are reported.  

\medskip

  \medskip

  \textbf{Key words.} Dual complex matrices, conjugation, eigenvalues, Hermitian matrices, singular value decomposition, ranks, Eckart-Young like theorem. 

\end{abstract}

\renewcommand{\Re}{\mathds{R}}
\newcommand{\rank}{\mathrm{rank}}
\renewcommand{\span}{\mathrm{span}}
\newcommand{\X}{\mathcal{X}}
\newcommand{\A}{\mathcal{A}}
\newcommand{\I}{\mathcal{I}}
\newcommand{\B}{\mathcal{B}}
\newcommand{\C}{\mathcal{C}}
\newcommand{\OO}{\mathcal{O}}
\newcommand{\e}{\mathbf{e}}
\newcommand{\0}{\mathbf{0}}
\newcommand{\dd}{\mathbf{d}}
\newcommand{\ii}{\mathbf{i}}
\newcommand{\jj}{\mathbf{j}}
\newcommand{\kk}{\mathbf{k}}
\newcommand{\va}{\mathbf{a}}
\newcommand{\vb}{\mathbf{b}}
\newcommand{\vc}{\mathbf{c}}
\newcommand{\vg}{\mathbf{g}}
\newcommand{\vr}{\mathbf{r}}
\newcommand{\vt}{\rm{vec}}
\newcommand{\vx}{\mathbf{x}}
\newcommand{\vy}{\mathbf{y}}
\newcommand{\vu}{\mathbf{u}}
\newcommand{\vv}{\mathbf{v}}
\newcommand{\y}{\mathbf{y}}
\newcommand{\vz}{\mathbf{z}}
\newcommand{\T}{\top}

\newtheorem{Thm}{Theorem}[section]
\newtheorem{Def}[Thm]{Definition}
\newtheorem{Ass}[Thm]{Assumption}
\newtheorem{Lem}[Thm]{Lemma}
\newtheorem{Prop}[Thm]{Proposition}
\newtheorem{Cor}[Thm]{Corollary}
\newtheorem{Exa}[Thm]{Example}

\section{Introduction}

In 1873, W.K. Clifford \cite{Cl73} introduced dual numbers, dual complex numbers and dual quaternions.  These become the core knowledge of Clifford algebra or geometric algebra.

While dual quaternions can represent rigid body motions in 3D spaces, the primary application of dual complex numbers is in representing rigid body motions in 2D spaces \cite{MKO14}.
Thus, an $m$-dimensional dual complex vector can represent a set of $m$ rigid body motions in 2D space, and an $m \times n$ dual complex matrix represents a linear transformation from the $n$-dimensional dual complex vector space to the $m$-dimensional dual complex vector space.
Dual complex matrices are also linked with screw geometry or screw theory \cite{Fi98}, and have potential applications in  classical mechanics and robotics, complex representations of the Lorentz group in relativity and electrodynamics, conformal mappings in computer vision, the physics of scattering processes, etc., see \cite{Br20}.  
One important tool in data analysis is Principal Component Analysis (PCA) \cite{CLMW11}.  The core of PCA is low rank approximation of matrices.     Thus, in this paper, we study low rank approximation of dual complex matrices.

Suppose that all the data points are stacked as column vectors of a matrix $M$, the matrix should (approximately) have low rank: mathematically,
$$M= L_0 + N_0,$$
where $L_0$ has low rank and $N_0$ is a small perturbation matrix.  Classical Principal Component Analysis seeks the best rank-$k$ estimate of $L_0$ by solving
$$\min \{ \| M-L\|_2 : {\rm rank}(L) \le k \}.$$
This problem can be solved via the singular value decomposition (SVD) and enjoys a number of optimality properties.  See \cite{CLMW11}.

Now, $M$ is a dual complex matrix.   Hence, we need a theory of low rank approximation for dual complex matrices, including unitary invariance of dual complex matrices, SVD decomposition of dual complex matrices, the rank theory of dual complex matrices, and an Eckart-Young like theorem for dual complex matrices.   In this paper, we study these issues.

In the next section, we introduce the $2$-norm for dual complex vectors. The $2$-norm of a dual complex vector is a nonnegative dual number.
In Section 3, we define the Frobenius norm for dual complex matrices, and establish the unitary equivalence property of dual complex matrices.

In Section 4, we study eigenvalues of dual complex matrices, in particular, dual complex Hermitian matrix.   We show that an $n \times n$ dual complex Hermitian matrix has exactly $n$ eigenvalues, which are nonnegative dual numbers.   We prove a unitary decomposition theorem for dual complex Hermitian matrices.
A singular value decomposition theorem of dual complex matrices is proved in Section 5.  In Section 6, we define ranks and appreciate ranks for dual complex matrices, and study their properties.  An Eckart-Young like theorem for dual complex matrices is established in Section 7.

An algorithm framework for low rank approximation of dual complex matrices via truncated SVD is presented, and numerical experiments are reported,  in Section 8.

A dual complex number can represent a 2D rigid body motion in a 2D space.   Such a 2D space can be a plane or a surface.   For example, the cortex can be regarded as a 2D space. In 2016, Alexander et al. \cite{ANJZMV16} applied principal component analysis (PCA) on scalar valued phase gradients to analyze plane waves in the cortex.   In 2019,  Alexander et al. \cite{ABSV19} used PCA on complex valued unit phase to analyze spiral waves in the cortex.   The core of PCA is singular value decomposition (SVD) of matrices \cite{CLMW11}.  In these two cases, SVD of complex matrices are used.  However, the plane waves and spiral waves in the cortex are correlated. They should not be analyzed separately.    A possible way to overcome this defect is to combine two kinds of analysis together by using SVD of dual complex matrices.    In Section 8, we show that computationally, this combination is possible.  


Throughout the paper, scalars, vectors and matrices are denoted by small letters, bold small letters and capital letters, respectively.

\section{The $2$-Norm of Dual Complex Vectors}

\subsection{Dual Numbers}

Denote $\mathbb R$, $\mathbb C$ and $\mathbb D$ as the set of real numbers, the set of complex numbers, and the set of dual numbers, respectively.   A dual number $q$ may be written as $q = q_{st} + q_\I\epsilon$, where $q_{st}$ and $q_\I$ are real numbers,  and $\epsilon$ is the infinitesimal unit, satisfying $\epsilon^2 = 0$.   We call $q_{st}$ the real part or the standard part of $q$, and $q_\I$ the dual part or the infinitesimal part of $q$.  The infinitesimal unit $\epsilon$ is commutative in multiplication with complex numbers.  The dual numbers form a commutative algebra of dimension two over the reals.    If $q_{st} \not = 0$, we say that $q$ is appreciable, otherwise, we say that $q$ is infinitesimal.

A total order $\le$ for dual numbers was introduced in \cite{QLY21}.  Given two dual numbers $p, q \in \mathbb D$, $p = p_{st} + p_\I\epsilon$, $q = q_{st} + q_\I\epsilon$, where $p_{st}$, $p_\I$, $q_{st}$ and $q_\I$ are real numbers, we say that $p \le q$, if either $p_{st} < q_{st}$, or $p_{st} = q_{st}$ and $p_\I \le q_\I$.  In particular, we say that $p$ is positive, nonnegative, nonpositive or negative, if $p > 0$, $p \ge 0$, $p \le 0$ or $p < 0$, respectively.

Suppose that $q = q_{st} + q_\I \epsilon \in \mathbb D$, and $q_{st} > 0$.  Then we define the square root
\begin{equation}
\sqrt{q} = \sqrt{q_{st}} + {q_\I \over 2\sqrt{q_{st}}}\epsilon.
\end{equation}
Conventionally, we have $\sqrt{0} = 0$.

\subsection{Dual Complex Numbers}

Denote the dual complex numbers by  $\mathbb {DC}$.
A dual complex number $q$ has the form \cite{Br20, GT19, Me15}
\begin{equation} \label{ee1}
q = q_{st} + q_\I \epsilon,
\end{equation}
where $q_{st}$ and $q_\I$ are complex numbers.   Again, we call $q_{st}$ the real part or the standard part of $q$, and $q_\I$ the dual part or the infinitesimal part of $q$.  We say that a dual complex number $q$ is appreciable if its standard part is nonzero. Otherwise, we say that it is infinitesimal.
 The multiplication of dual complex numbers is commutative.

 Denote
$$q_{st} = q_1 + q_2\ii, \  \ q_\I = q_3 + q_4\ii,$$
where $q_1, q_2, q_3, q_4 \in \mathbb R$.

Recall that for a complex number $c= a+b\ii$, where $a$ and $b$ are real numbers, its conjugate is
$\bar c = a-b\ii$. We also have $c\bar c = \bar c c = a^2 + b^2 = |c|^2$.

The conjugate of $q = q_{st} + q_\I \epsilon$ is
\begin{equation} \label{ee2}
\bar q = \bar q_{st} + \bar q_\I \epsilon.
\end{equation}
We have
$$q\bar q = \bar q q = |q_{st}|^2 + (q_{st}\bar q_\I + q_\I\bar q_{st})\epsilon = (q_1^2 + q_2^2)+2(q_1q_3+q_2q_4)\epsilon.$$
It is a positive dual number if $q$ is appreciable, or $0$ otherwise.
From this, we may define the magnitude of a dual complex number $q$ as
\begin{equation}
|q| = \left\{\begin{aligned}\sqrt{q_1^2 + q_2^2} + {q_1q_3+q_2q_4 \over \sqrt{q_1^2+q_2^2}}\epsilon, & \ {\rm if}\ q_{st} \not = 0, \\
\sqrt{q_3^2+q_4^2}\epsilon, & \ {\rm otherwise.} \end{aligned}\right.
\end{equation}

By direct calculations, we have the following proposition.

\begin{Prop} \label{p2.1}
The magnitude $|q|$ is a nonnegative dual number for any $q \in \mathbb {DC}$.   If $q$ is appreciable, then
\begin{equation} \label{e8}
|q| = \sqrt{q\bar q}.
\end{equation}
For any $p, q \in \mathbb {DC}$, we have
\begin{itemize}
\item[(i)] $|q| = |\bar q|$;
\item[(ii)] $|q|\ge 0$ for all $q$, and $|q| = 0$ if and only if $q = 0$;
\item[(iii)] $|pq| = |p| |q|$;
\item[(iv)] $|p+q| \le |p|+|q|$.
\end{itemize}

\end{Prop}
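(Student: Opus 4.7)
The strategy is to prove the items in essentially the same order as they are stated, since most reduce to direct calculations from the explicit formula for $|q|$. Write $q=q_{st}+q_\I\epsilon$ with $q_{st}=q_1+q_2\ii$ and $q_\I=q_3+q_4\ii$.

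First, the nonnegativity statement and formula \eqref{e8} follow at once. In the appreciable case, $q\bar q=(q_1^2+q_2^2)+2(q_1q_3+q_2q_4)\epsilon$ is a positive dual number, so applying the square-root definition with $q_{st}\neq 0$ reproduces the first branch of $|q|$; in the infinitesimal case, the standard part of $|q|$ is $0$ and the infinitesimal part $\sqrt{q_3^2+q_4^2}$ is nonnegative, so $|q|\ge 0$ in the total order introduced in Section~2.1. Item (ii) then drops out of the same case split: in the appreciable case $|q|$ has positive standard part, while in the infinitesimal case $|q|=0$ iff $q_3=q_4=0$ iff $q=0$.

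For (i) I would just substitute $\bar q=(q_1-q_2\ii)+(q_3-q_4\ii)\epsilon$ into the definition and observe that $q_1^2+q_2^2$ and the cross term $q_1q_3+q_2q_4$ are invariant under $q_2\mapsto-q_2$, $q_4\mapsto-q_4$. For (iii) the natural path is a three-case split according to whether $p$ and $q$ are appreciable. When both are appreciable, $pq$ is appreciable, so by \eqref{e8}, $|pq|=\sqrt{pq\,\overline{pq}}=\sqrt{(p\bar p)(q\bar q)}$; a brief lemma-style verification that the dual square root is multiplicative on positive dual numbers (which is itself a one-line check using the formula $\sqrt{a_{st}b_{st}}+\tfrac{a_{st}b_\I+a_\I b_{st}}{2\sqrt{a_{st}b_{st}}}\epsilon$) finishes this case. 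When exactly one factor is infinitesimal, say $q=q_\I\epsilon$, then $pq=p_{st}q_\I\epsilon$ is infinitesimal and $|pq|=|p_{st}q_\I|\epsilon$; expanding $|p|\,|q|$ and using $\epsilon^2=0$ yields the same value. When both are infinitesimal, $pq=0$ and $|p|\,|q|$ already vanishes by $\epsilon^2=0$.

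The main obstacle is (iv). Here the total order forces a subcase analysis. If $p_{st}+q_{st}\neq 0$ and the ordinary complex triangle inequality $|p_{st}+q_{st}|\le|p_{st}|+|q_{st}|$ is strict, then the standard parts satisfy $|p+q|_{st}<|p|_{st}+|q|_{st}$ and we are done by the definition of the total order regardless of the infinitesimal parts. The delicate situation is when $|p_{st}+q_{st}|=|p_{st}|+|q_{st}|$, i.e.\ $p_{st}$ and $q_{st}$ are nonnegative real multiples of a common unit complex number; writing $p_{st}=\alpha w$, $q_{st}=\beta w$ with $\alpha,\beta\ge 0$ and $|w|=1$, the explicit formula reduces the infinitesimal part of $|p+q|-|p|-|q|$ to $\mathrm{Re}(\bar w(p_\I+q_\I))-\mathrm{Re}(\bar w p_\I)-\mathrm{Re}(\bar w q_\I)=0$ in the generic subcase, and a short separate check handles the degenerate subcases where $\alpha$ or $\beta$ vanishes (in which $|p+q|$ may switch to the infinitesimal branch of its definition). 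The remaining case $p_{st}+q_{st}=0$ with $p_{st}\neq 0$ is handled by again observing that the standard part of $|p|+|q|$ is then $2|p_{st}|>0$ while $|p+q|$ is at most appreciable of order $\epsilon$, so strict inequality holds.
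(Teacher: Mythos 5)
The paper does not actually prove Proposition~\ref{p2.1}: it declares the result a special case of Theorem~5.1 of \cite{QLY21} (the dual \emph{quaternion} analogue) and omits the argument. Your proposal therefore does something genuinely different in kind: it supplies a self-contained, purely calculational proof working directly from the formulas for $|q|$ and the square root of a positive dual number. That is a legitimate and arguably more useful route here, since it keeps the dual-complex development independent of the dual-quaternion paper; the trade-off is length, which is exactly why the authors chose to cite rather than reprove.

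On correctness: items (i)--(iii) and \eqref{e8} are handled cleanly, including the key lemma that the dual square root is multiplicative on positive dual numbers and the $\epsilon^2=0$ reductions in the mixed and doubly-infinitesimal cases of (iii). For (iv), your case split on the complex triangle inequality (strict vs.\ equality) and on $p_{st}+q_{st}=0$ is the right skeleton, and the computation $|p|_\I=\mathrm{Re}(\bar w p_\I)$ when $p_{st}=\alpha w$, $\alpha>0$, $|w|=1$ is exactly what makes the equality subcase close. The one subcase your enumeration does not explicitly reach is $p_{st}=q_{st}=0$ (both $p$ and $q$ infinitesimal), which falls outside your ``$p_{st}+q_{st}\neq 0$'' branch and also outside your final ``$p_{st}+q_{st}=0$ with $p_{st}\neq 0$'' branch. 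It is immediate --- there $|p+q|=|p_\I+q_\I|\,\epsilon\le(|p_\I|+|q_\I|)\epsilon=|p|+|q|$ by the ordinary complex triangle inequality --- but it should be stated so the case analysis is exhaustive.
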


It is a special case of Theorem 5.1 of \cite{QLY21}.  Hence, we omit its proof here.


\subsection{Dual Complex Vectors}

Denote $\vx = (x_1, \cdots, x_n)^\top \in {\mathbb {DC}}^n$ for dual complex vectors.   We say that $\vx \in {\mathbb {DC}}^n$ is appreciable if at least one of its components is appreciable.  We may also write
$$\vx = \vx_{st} + \vx_\I\epsilon,$$
where $\vx_{st}, \vx_\I \in {\mathbb C}^n$.
Define
$\vx^* := \bar \vx^\top \equiv (\bar x_1, \cdots, \bar x_n)$.   The $2$-norm of $\vx \in {\mathbb {DC}}^n$ is defined as
\begin{equation} \label{2-norm}
\|\vx \|_2 = \left\{\begin{aligned}\sqrt{\sum_{i=1}^n |x_i|^2}, & \ {\rm if}\ \vx_{st} \not = \0, \\
\|\vx_\I\|_2\epsilon, & \ {\rm otherwise.} \end{aligned}\right.
\end{equation}
If $\|\vx\|_2 = 1$, then we say that $\vx$ is a unit dual complex vector.
If
$\vx^{(1)}, \cdots, \vx^{(n)} \in {\mathbb {DC}}^n$ and $\left(\vx^{(i)}\right)^*\vx^{(j)} = \delta_{ij}$ for $i, j = 1, \cdots, n$, where $\delta_{ij}$ is the Kronecker symbol, then we say that
$\{ \vx^{(1)}, \cdots, \vx^{(n)} \}$ is an orthonormal basis of ${\mathbb {DC}}^n$.

We have the following proposition.

\begin{Prop}
Suppose that $\vx, \vy \in {\mathbb {DC}}^n$, and $q \in \mathbb {DC}$.   Then,
\begin{itemize}
\item[(i)] $\|\vx\|_2 \ge 0$, and $\|\vx \|_2 = 0$ if and only if $\vx = \0$;
\item[(ii)]$\|q\vx\|_2 = |q|\|\vx\|_2$;
\item[(iii)]$\|\vx + \vy\|_2 \le \|\vx\|_2 + \|\vy\|_2$.
\end{itemize}
\end{Prop}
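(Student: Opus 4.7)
The plan is to prove each of (i), (ii), (iii) by case analysis on whether the vectors involved are appreciable, using the piecewise definition of $\|\cdot\|_2$ and the scalar magnitude properties established in Proposition~\ref{p2.1}. For (i), if $\vx_{st} \neq \0$, then at least one component is appreciable, so $\sum_{i=1}^n |x_i|^2$ has positive standard part and $\|\vx\|_2 = \sqrt{\sum_i |x_i|^2}$ is an appreciable positive dual number; if $\vx_{st} = \0$, then $\|\vx\|_2 = \|\vx_\I\|_2 \epsilon$ is a nonnegative multiple of $\epsilon$, vanishing if and only if $\vx_\I = \0$, i.e.\ $\vx = \0$.

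For (ii), the principal case is that both $q$ and $\vx$ are appreciable. Then Proposition~\ref{p2.1}(iii) applied componentwise gives $|qx_i|^2 = |q|^2 |x_i|^2$, so $\|q\vx\|_2^2 = |q|^2 \|\vx\|_2^2$; since both factors have positive standard parts, taking square roots factors cleanly to $\|q\vx\|_2 = |q|\|\vx\|_2$. The remaining subcases follow by direct calculation: when both $q$ and $\vx$ are infinitesimal, $q\vx = \0$ via $\epsilon^2 = 0$ and both sides vanish; in the two mixed subcases one expands $q\vx$ explicitly and reduces to the classical scalar-vector norm identity in $\mathbb{C}^n$.

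The triangle inequality (iii) is where the main work lies. I would split into subcases by the appreciability of $\vx$, $\vy$, and $\vx+\vy$. In the fully appreciable subcase, the square-root formula yields
\[
\|\vx\|_2 = \|\vx_{st}\|_2 + \frac{\mathrm{Re}(\vx_{st}^* \vx_\I)}{\|\vx_{st}\|_2}\epsilon,
\]
and analogous expressions for $\vy$ and $\vx+\vy$. The standard-part comparison is the classical complex triangle inequality $\|\vx_{st}+\vy_{st}\|_2 \le \|\vx_{st}\|_2 + \|\vy_{st}\|_2$; when strict, the dual total order settles the inequality at once, and when it saturates, forcing $\vy_{st} = \lambda \vx_{st}$ for some $\lambda > 0$, a direct substitution shows the dual parts of both sides collapse to the same value. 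When $\vx+\vy$ is infinitesimal while $\vx, \vy$ are appreciable, the RHS has positive standard part while the LHS does not, so the inequality is trivial. In the mixed subcase where, say, $\vx_{st}=\0$ and $\vy_{st} \neq \0$, the standard parts of both sides coincide and the dual-part inequality reduces to $\mathrm{Re}(\vy_{st}^*\vx_\I)/\|\vy_{st}\|_2 \le \|\vx_\I\|_2$, which is classical Cauchy-Schwarz in $\mathbb{C}^n$. When both $\vx$ and $\vy$ are infinitesimal, the inequality reduces directly to the complex triangle inequality on $\vx_\I$ and $\vy_\I$.

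The most delicate step is the saturated subcase within the fully appreciable regime: one needs to verify that the dual-part equation actually matches as an equality (so that the overall inequality is saturated) rather than tilting in either direction. This is a short algebraic collapse made possible by consistently substituting $\|\vy_{st}\|_2 = \lambda \|\vx_{st}\|_2$ into each denominator, but it is the one place where the dual-number bookkeeping has to be done carefully.
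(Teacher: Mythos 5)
The paper does not actually prove this proposition: it simply remarks that it is a special case of Theorem~6.4 of \cite{QLY21} and omits the argument. Your blind proof is therefore a genuine addition rather than a reconstruction, and it is correct. The case analysis on appreciability is the right structure, since the definition of $\|\cdot\|_2$ and of $|\cdot|$ are piecewise. A few confirmations of the delicate points you flagged: in part~(ii) the factorization $\sqrt{|q|^2\sum_i|x_i|^2}=|q|\sqrt{\sum_i|x_i|^2}$ is legitimate because $\sqrt{ab}=\sqrt{a}\,\sqrt{b}$ holds for nonnegative dual numbers with appreciable standard parts (one checks $\sqrt{a_{st}b_{st}}+\tfrac{a_\I b_{st}+a_{st}b_\I}{2\sqrt{a_{st}b_{st}}}\epsilon$ comes out either way); and the componentwise identity $|qx_i|^2=|q|^2|x_i|^2$ holds even when an individual $x_i$ is infinitesimal, since then both sides vanish. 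In part~(iii), the saturated subcase does collapse exactly as you claim: with $\vy_{st}=\lambda\vx_{st}$, $\lambda>0$, the dual part of $\|\vx+\vy\|_2$ is $\mathrm{Re}\bigl(\vx_{st}^*(\vx_\I+\vy_\I)\bigr)/\|\vx_{st}\|_2$ after the factors $(1+\lambda)$ cancel, and the dual part of $\|\vx\|_2+\|\vy\|_2$ is $\mathrm{Re}(\vx_{st}^*\vx_\I)/\|\vx_{st}\|_2+\mathrm{Re}(\vx_{st}^*\vy_\I)/\|\vx_{st}\|_2$ after cancelling $\lambda$, and these agree. The mixed subcase of~(iii) correctly reduces to Cauchy--Schwarz in $\mathbb{C}^n$, and the doubly infinitesimal subcase to the classical triangle inequality. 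The argument is complete; the only thing worth adding for a fully self-contained write-up would be an explicit line establishing the multiplicativity of the square root for appreciable nonnegative dual numbers, which you use implicitly in~(ii).
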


This proposition can be proved by definition.  It is a special case of Theorem 6.4 of \cite{QLY21}.  Hence, we also omit its proof here.

Note that if both $\vx$ and $q$ are appreciable, then $q\vx$ is appreciable.


\section{Unitary Invariance of Dual Complex Matrices}

The collections of real, complex and dual complex $m \times n$ matrices are denoted by ${\mathbb R}^{m \times n}$, ${\mathbb C}^{m \times n}$ and ${\mathbb {DC}}^{m \times n}$, respectively.

A dual complex matrix $A= (a_{ij}) \in {\mathbb {DC}}^{m \times n}$ can be denoted as
\begin{equation} \label{ee5}
A = A_{st} + A_\I\epsilon,
\end{equation}
where $A_{st}, A_\I \in {\mathbb C}^{m \times n}$.  Again, we call $A_{st}$ and $A_\I$ the standard part and the infinitesimal part of $A$, respectively.    The transpose of $A$ is $A^\top = (a_{ji})$. The conjugate of $A$ is $\bar A = (\bar a_{ij})$.   The conjugate transpose of $A$ is $A^* = (\bar a_{ji}) = \bar A^\top$.   

Let $A \in {\mathbb {DC}}^{m \times n}$ and $B \in {\mathbb {DC}}^{n \times r}$.   Then we have
\begin{equation}\label{commu} (AB)^\top = B^\top A^\top,~~(AB)^* = B^*A^*.\end{equation}

Given a square dual complex matrix $A \in {\mathbb {DC}}^{n \times n}$, it is called invertible (nonsingular) if $AB = BA = I_n$ for some $B \in {\mathbb {DC}}^{n \times n}$, where $I_n$ is the $n\times n$ identity matrix. Such $B$ is unique and denoted by  $A^{-1}$.  
Matrix $A$ is called Hermitian if $A^* = A$. Write $A = A_{st} + A_\I\epsilon \in {\mathbb {DC}}^{n \times n}$. Then $A$ is Hermitian if and only if both $A_{st}$ and $A_\I$ are complex Hermitian matrices. Matrix $A$ is called unitary if $A^*A= I_n$. Apparently,  $A \in {\mathbb {DC}}^{n \times n}$ is unitary if and only if its column vectors form an orthonormal basis of ${\mathbb {DC}}^n$. 
Let $k \leq n$. We say that $A \in \mathbb{DC}^{n\times k}$ is partially unitary if its column vectors are unit vectors and orthogonal to each other.



Suppose that $A \in {\mathbb {DC}}^{n \times n}$ is invertible.   Then all column and row vectors of $A$ are appreciable.  This can be proved by definition directly.  We also have the following proposition.

\begin{Prop} \label{p2.0}
Suppose that $A \in {\mathbb {DC}}^{n \times n}$ is unitary.   Then $AA^* =I_n$.
\end{Prop}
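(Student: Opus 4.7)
The plan is to reduce the claim to two statements at the level of the standard and infinitesimal parts, and then use the fact that a square complex matrix satisfying $B^*B=I_n$ automatically satisfies $BB^*=I_n$.

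First, I would write $A = A_{st} + A_\I \epsilon$ with $A_{st},A_\I \in \mathbb{C}^{n\times n}$, so that $A^* = A_{st}^* + A_\I^* \epsilon$, and compute
\[
A^*A = A_{st}^* A_{st} + \left(A_{st}^* A_\I + A_\I^* A_{st}\right)\epsilon.
\]
The hypothesis $A^*A = I_n$ then splits into the standard-part identity $A_{st}^*A_{st}=I_n$ and the infinitesimal-part identity $A_{st}^* A_\I + A_\I^* A_{st} = 0$.

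Next, since $A_{st}$ is a square complex matrix with $A_{st}^*A_{st}=I_n$, it is a unitary matrix in the ordinary complex sense, so we automatically have $A_{st}A_{st}^* = I_n$. This is the only nontrivial classical input and handles the standard part of $AA^*$. For the infinitesimal part of $AA^*$, I would take the identity $A_{st}^* A_\I + A_\I^* A_{st} = 0$ and multiply on the left by $A_{st}$ and on the right by $A_{st}^*$; using $A_{st}A_{st}^* = I_n$ on both resulting terms then yields $A_\I A_{st}^* + A_{st} A_\I^* = 0$.

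Finally, assembling,
\[
AA^* = A_{st}A_{st}^* + \left(A_{st}A_\I^* + A_\I A_{st}^*\right)\epsilon = I_n + 0\cdot\epsilon = I_n,
\]
which is the desired conclusion. The main obstacle, such as it is, is recognizing that one must not try to prove the dual-part identity by abstract dual-number manipulations but must instead exploit that $A_{st}$ is genuinely a complex unitary matrix so that $A_{st}A_{st}^*=I_n$ can be used to conjugate the infinitesimal-part equation into the form needed for $AA^*$. Once that observation is made the calculation is a two-line rearrangement.
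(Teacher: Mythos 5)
Your proof is correct and follows essentially the same route as the paper: split $A^*A=I_n$ into the standard-part identity $A_{st}^*A_{st}=I_n$ and the infinitesimal-part identity $A_{st}^*A_\I+A_\I^*A_{st}=O$, use classical complex matrix theory to get $A_{st}A_{st}^*=I_n$, and conjugate the infinitesimal-part identity by $A_{st}(\cdot)A_{st}^*$ to obtain $A_{st}A_\I^*+A_\I A_{st}^*=O$. The paper's argument is the same two-step reduction, so there is nothing to add.
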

\begin{proof}   Write $A = A_{st}+A_\I\epsilon$, where $A_{st}, A_\I \in {\mathbb C}^{n \times n}$.   Since $A$ is unitary, $A^*A = I_n$.  This is equivalent to
\begin{equation} \label{aa9}
A_{st}^*A_{st} = I_n
\end{equation}
and
\begin{equation} \label{aa10}
A_{\I}^*A_{st} + A_{st}^*A_\I = O.
\end{equation}
By matrix analysis, from (\ref{aa9}), we have
\begin{equation} \label{aa11}
A_{st}A_{st}^* = I_n
\end{equation}
By (\ref{aa10}), we have
$$A_{st}(A_{\I}^*A_{st} + A_{st}^*A_\I)A_{st}^* = O,$$
i.e.,
\begin{equation} \label{aa12}
A_{st}A_\I^* + A_\I A_{st}^* = O.
\end{equation}
From (\ref{aa11}) and (\ref{aa12}), we have $AA^* =I_n$.
\end{proof}

Dual complex partially unitary matrices have the following properties.

\begin{Prop} \label{pp3.1}
Suppose that $U \in {\mathbb{DC}}^{n \times k}$ is partially unitary, $U = U_{st} + U_\I\epsilon$, where $U_{st}, U_\I \in {\mathbb C}^{n \times k}$, $k \le n$.    Then $U_{st}$ is a complex partially unitary matrix, and
\begin{equation} \label{eq2}
U_{st}^*U_\I + U_\I^*U_{st} = O.
\end{equation}
\end{Prop}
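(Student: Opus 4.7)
The plan is to unpack the defining condition of partial unitarity into a single matrix equation on $U$, then expand that equation in terms of $U_{st}$ and $U_\I$ and equate standard and infinitesimal parts.

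First I would verify that ``partially unitary'' for $U$ amounts exactly to $U^*U = I_k$. By the definition given just before the proposition, the column vectors of $U$ are unit vectors and mutually orthogonal, i.e.\ $(U^{(i)})^*U^{(j)}=\delta_{ij}$, which collectively says $U^*U=I_k$. A small but necessary sanity check here is that the condition $\vx^*\vx = 1$ really is equivalent to $\|\vx\|_2=1$ in the dual complex sense: computing $\vx^*\vx=\sum_i |x_i|^2$ directly from the definition of conjugate, and comparing with formula (\ref{2-norm}), shows that $\vx^*\vx=1$ forces $\vx$ to be appreciable (since the standard part of $\vx_{st}^*\vx_{st}$ equals $1\ne 0$), and then $\|\vx\|_2 = \sqrt{\vx^*\vx}=1$. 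So the two notions of ``unit vector'' coincide on appreciable vectors, which is what is needed here.

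Next I would simply expand $U^*U$ using $U=U_{st}+U_\I\epsilon$ and $U^*=U_{st}^*+U_\I^*\epsilon$, invoking (\ref{commu}) and $\epsilon^2=0$:
\begin{equation*}
U^*U = U_{st}^*U_{st} + \bigl(U_{st}^*U_\I+U_\I^*U_{st}\bigr)\epsilon.
\end{equation*}
Setting this equal to $I_k = I_k + O\cdot\epsilon$ and matching the standard and infinitesimal parts (which is justified by the unique decomposition of a dual complex matrix into its standard and infinitesimal parts) yields the two identities $U_{st}^*U_{st}=I_k$ and $U_{st}^*U_\I+U_\I^*U_{st}=O$. The first of these is precisely the statement that $U_{st}\in\mathbb{C}^{n\times k}$ is a complex partially unitary matrix, and the second is equation (\ref{eq2}).

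There is no real obstacle here: the proof is essentially the same bookkeeping used to prove Proposition \ref{p2.0}, only one-sided (left multiplication by $U^*$) since $U$ need not be square. The only pitfall to guard against is to keep the order of multiplication correct when writing $U^*U$ out in blocks, because the standard/infinitesimal split of $A^*$ is $A_{st}^*+A_\I^*\epsilon$ rather than something involving transposes of the parts in a different order.
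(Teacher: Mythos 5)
Your proof is correct and follows essentially the same argument as the paper: expand $U^*U = I_k$ in terms of standard and infinitesimal parts, then match the two components to obtain $U_{st}^*U_{st}=I_k$ and equation (\ref{eq2}). The extra sanity check that $\vx^*\vx=1$ is equivalent to $\|\vx\|_2=1$ is reasonable diligence, though the paper takes the equivalence of ``partially unitary'' with $U^*U=I_k$ as immediate from the definitions.
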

\begin{proof}
Since $U$ is partially unitary, $U^*U = I_k$, i.e.,
$$\left(U_{st}^* + U_\I^*\epsilon\right)\left(U_{st} + U_\I\epsilon\right) = I_k.$$
This implies that
$$U_{st}^*U_{st} + \left(U_{st}^*U_\I + U_\I^*U_{st}\right)\epsilon = I_k.$$
Then we have (\ref{eq2}), and $U_{st}^*U_{st} = I_k$, i.e., $U_{st}$ is a complex partially unitary matrix.
\end{proof}

\begin{Prop} \label{pp3.1.1}
Suppose that $U = U_{st} + U_\I\epsilon \in {\mathbb{DC}}^{n \times k}$ is partially unitary, where $U_{st}, U_\I \in {\mathbb C}^{n \times k}$, $k < n$.    Then there is a vector $\vv \in {\mathbb {DC}}^n$ such that $(U, \vv) \in {\mathbb{DC}}^{n \times (k+1)}$ is partially unitary.
\end{Prop}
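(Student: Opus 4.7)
The plan is to write $\vv = \vv_{st} + \vv_\I\epsilon$ with $\vv_{st}, \vv_\I \in \mathbb{C}^n$, and translate the target conclusion $(U,\vv)^*(U,\vv) = I_{k+1}$ into the two conditions $U^*\vv = \0$ and $\vv^*\vv = 1$. Expanding each of these as a dual complex equation and separating standard and infinitesimal parts, the problem reduces to four complex conditions on $\vv_{st}$ and $\vv_\I$:
\begin{equation*}
U_{st}^*\vv_{st} = \0, \qquad \vv_{st}^*\vv_{st} = 1, \qquad U_{st}^*\vv_\I + U_\I^*\vv_{st} = \0, \qquad \vv_{st}^*\vv_\I + \vv_\I^*\vv_{st} = 0.
\end{equation*}

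First I would build the standard part $\vv_{st}$. By Proposition \ref{pp3.1}, $U_{st} \in \mathbb{C}^{n \times k}$ is a complex partially unitary matrix with $k < n$, so classical complex linear algebra (extending an orthonormal set to an orthonormal basis of $\mathbb{C}^n$) yields a unit vector $\vv_{st} \in \mathbb{C}^n$ orthogonal to every column of $U_{st}$. This handles the first two conditions above.

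Next I would produce $\vv_\I$ explicitly by setting
\begin{equation*}
\vv_\I := -U_{st}\,U_\I^*\vv_{st}.
\end{equation*}
Using $U_{st}^*U_{st} = I_k$ from Proposition \ref{pp3.1}, I compute $U_{st}^*\vv_\I = -U_\I^*\vv_{st}$, which delivers the third condition. For the fourth, using $U_{st}^*\vv_{st} = \0$ and hence $\vv_{st}^*U_{st} = \0$, I obtain $\vv_{st}^*\vv_\I = -\vv_{st}^*U_{st}U_\I^*\vv_{st} = 0$, so both $\vv_{st}^*\vv_\I$ and $\vv_\I^*\vv_{st}$ vanish individually, not merely in sum.

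I do not expect a serious obstacle: once $\vv_{st}$ is chosen in the orthogonal complement of the columns of $U_{st}$, the explicit candidate for $\vv_\I$ works out in closed form, and the identity (\ref{eq2}) from Proposition \ref{pp3.1} is not even needed for this construction. The only subtlety worth flagging is that one must check normalization from $\vv^*\vv = 1$ rather than from the piecewise formula (\ref{2-norm}); since $\vv_{st}$ is appreciable (being a complex unit vector), there is no trouble, and $\vv$ is in particular an appreciable dual complex unit vector, as required for $(U,\vv)$ to be partially unitary.
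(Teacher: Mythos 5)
Your proof is correct and matches the paper's own argument: the paper likewise picks $\vv_{st}$ from the orthogonal complement of the columns of $U_{st}$ (via Proposition \ref{pp3.1}) and sets $\hat\vv_\I = -\sum_{j=1}^k (\vu_{\I,j}^*\vv_{st})\vu_{st,j}$, which is exactly your $-U_{st}U_\I^*\vv_{st}$ written out column-by-column. Your observation that $\vv_{st}^*\vv_\I = 0$ holds automatically, so the constructed vector is already a unit dual complex vector, is a small tidying-up: the paper normalizes by $\|\vv_{st}+\hat\vv_\I\epsilon\|_2$ even though that norm already equals $1$ with this choice.
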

\begin{proof} By Proposition \ref{pp3.1}, $U_{st}$ is a complex partially unitary matrix.  By complex matrix analysis, there is a complex vector $\vv_{st} \in {\mathbb {C}}^n$ such that $(U_{st}, \vv_{st})$ is partially unitary.  Denote the $j$th column vector of $U$ as $\vu_{st, j} + \vu_{\I, j}\epsilon$ for $j=1, \cdots, k$.  Now, let
\begin{equation}  \label{ae.1}
\hat \vv_\I = -\sum_{j=1}^k (\vu_{\I, j}^*\vv_{st})\vu_{st, j}.
\end{equation}
Then for $j = 1, \cdots, k$,
$$(\vv_{st} + \hat \vv_\I\epsilon)^*(\vu_{st, j} + \vu_{\I, j}\epsilon)=0.$$
Then, $\vv_{st} + \hat \vv_\I\epsilon$ is orthogonal to every column vector of $U$.  Note that $\vv_{st} + \hat \vv_\I\epsilon$ is appreciable. 
Let
\begin{equation} \label{ae.2}
\vv = {\vv_{st} + \hat \vv_\I\epsilon \over \|\vv_{st} + \hat \vv_\I\epsilon\|_2}.
\end{equation}
Then we have the desired result.
\end{proof}

Note that the proof of this proposition is constructive.   By complex matrix analysis, we may derive a formula for $\vv_{st}$.  Then we may calculate $\vv$ by (\ref{ae.1}) and (\ref{ae.2}).

\begin{Cor} \label{c3.4}
Suppose that $U \in {\mathbb{DC}}^{n \times k}$ is partially unitary, $k < n$.    Then there is  $V \in {\mathbb {DC}}^{n \times (n-k)}$ such that $(U, V) \in {\mathbb{DC}}^{n \times n}$ is unitary.
\end{Cor}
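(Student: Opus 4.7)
The plan is to prove Corollary \ref{c3.4} by a straightforward induction on the number of columns one still needs to add, driving the induction with Proposition \ref{pp3.1.1} at each step. Since Proposition \ref{pp3.1.1} already produces a single new unit vector orthogonal to all columns of a given partially unitary matrix whenever its column count is strictly less than $n$, the corollary should follow by iterating the construction $n-k$ times.

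More concretely, I would argue as follows. Set $U_0 = U \in {\mathbb{DC}}^{n \times k}$, which is partially unitary with $k < n$ columns. Applying Proposition \ref{pp3.1.1} to $U_0$ yields a vector $\vv_1 \in {\mathbb{DC}}^n$ such that $U_1 := (U_0, \vv_1) \in {\mathbb{DC}}^{n \times (k+1)}$ is partially unitary. If $k+1 < n$, apply Proposition \ref{pp3.1.1} once more to $U_1$ to obtain a vector $\vv_2 \in {\mathbb{DC}}^n$ with $U_2 := (U_1, \vv_2) \in {\mathbb{DC}}^{n \times (k+2)}$ partially unitary. Continue in this fashion; after exactly $n-k$ steps we obtain vectors $\vv_1, \ldots, \vv_{n-k} \in {\mathbb{DC}}^n$ such that
\begin{equation*}
(U, \vv_1, \ldots, \vv_{n-k}) \in {\mathbb{DC}}^{n \times n}
\end{equation*}
is partially unitary. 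Setting $V := (\vv_1, \ldots, \vv_{n-k}) \in {\mathbb{DC}}^{n \times (n-k)}$, the augmented matrix $(U, V)$ is an $n \times n$ partially unitary matrix, i.e., its columns form an orthonormal system of $n$ vectors, which by the definition of unitarity means $(U, V)^*(U, V) = I_n$. Hence $(U, V)$ is unitary, as required.

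There is essentially no hard step here, since the genuine work, namely the explicit construction of an extending column, is already carried out in the proof of Proposition \ref{pp3.1.1} via formulas \eqref{ae.1} and \eqref{ae.2}. The only minor subtlety one should flag is that Proposition \ref{pp3.1.1} requires its input matrix to have strictly fewer than $n$ columns; this is ensured at every stage of the iteration because we stop as soon as we reach $n$ columns, and each intermediate $U_j$ with $j < n-k$ has $k+j < n$ columns. After all $n-k$ iterations, the resulting square matrix satisfies the unitarity relation of Proposition \ref{p2.0}, so no further argument is needed.
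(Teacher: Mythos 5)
Your proof is correct and is exactly the argument the paper intends: the corollary follows by iterating Proposition \ref{pp3.1.1} a total of $n-k$ times, appending one orthonormal column at each step, and the paper accordingly states it without further proof. Nothing more is needed.
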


\begin{Cor} \label{cor3.5} Suppose that $k<n$, $U \in {\mathbb{DC}}^{n \times k}$, $V$, $W \in {\mathbb{DC}}^{n \times (n-k)}$, such that $(U,V)$ and $(U,W)$ are both unitary matrices in ${\mathbb{DC}}^{n \times n}$.  Then there is a unitary matrix $H \in {\mathbb {DC}}^{(n-k) \times (n-k)}$ such that $W = VH$.
\end{Cor}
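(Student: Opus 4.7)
My plan is to exhibit $H$ explicitly as $H := V^*W$, and then verify both $VH = W$ and the unitarity of $H$ by purely algebraic manipulations that rely on the two complementary identities associated with a full unitary matrix.

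First I would unpack what it means that $(U,V)$ and $(U,W)$ are unitary. Applying the defining relation $A^*A = I_n$ to the block matrix $(U,V)$ yields the four identities $U^*U = I_k$, $V^*V = I_{n-k}$, $U^*V = O$, $V^*U = O$; the analogous identities hold with $W$ in place of $V$. At this point I would invoke Proposition \ref{p2.0}, which guarantees $AA^* = I_n$ as well, so that in block form
\begin{equation*}
UU^* + VV^* = I_n.
\end{equation*}
This is the key ingredient: it says $VV^*$ is the projection onto the orthogonal complement of the column space of $U$, from which the corollary essentially falls out. Note that the block identities above make sense in $\mathbb{DC}$ because Proposition \ref{pp3.1} and matrix multiplication rules from (\ref{commu}) were already established for dual complex matrices.

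Next I would set $H := V^*W \in \mathbb{DC}^{(n-k)\times(n-k)}$ and compute
\begin{equation*}
VH = VV^*W = (I_n - UU^*)W = W - U(U^*W) = W - U\cdot O = W,
\end{equation*}
using $U^*W = O$ from the unitarity of $(U,W)$. This gives $W = VH$ as required. For unitarity of $H$,
\begin{equation*}
H^*H = W^*VV^*W = W^*(I_n - UU^*)W = W^*W - (W^*U)(U^*W) = I_{n-k} - O = I_{n-k},
\end{equation*}
so $H$ is unitary by definition.

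There is no real obstacle here: the entire argument is the standard Hilbert-space computation that would appear over $\mathbb{C}$, and the only thing to check is that each identity survives in the dual complex setting. The one delicate point, which is exactly why Proposition \ref{p2.0} is placed in the paper just before this corollary, is that we must know $(U,V)(U,V)^* = I_n$ (i.e., that $A^*A = I_n$ forces $AA^* = I_n$ for dual complex matrices) in order to obtain the resolution of the identity $UU^* + VV^* = I_n$. Once that is granted, the rest is algebra that uses only the commutativity of dual complex number multiplication and the transpose-conjugate rule (\ref{commu}).
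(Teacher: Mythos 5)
Your proof is correct and takes essentially the same approach as the paper: both define $H := V^*W$ and verify $VH = W$ and unitarity of $H$. The paper phrases the key identity as $VV^* = WW^*$ (obtained from $UU^* + VV^* = I_n = UU^* + WW^*$) and writes $VH = VV^*W = WW^*W = W$, whereas you use $VV^* = I_n - UU^*$ together with $U^*W = O$; these are algebraically the same observation, and your appeal to Proposition \ref{p2.0} to justify the resolution of the identity matches the paper's implicit use of that result.
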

\begin{proof} 
Set $H = V^*W$. Since $(U,V)$ and $(U,W)$ are unitary, we have
$$U^* W =O, ~~VV^* = WW^*,~~V^*V = W^*W = I_{n-k}.$$
Thus,
$$VH = VV^*W = WW^*W = WI_{n-k}=W.$$
Note that $H^*H = W^* V V^* W = W^* WW^*W = I_{n-k}$, and $HH^* = V^*W W^* V = V^* V V^*V = I_{n-k}$. Thus, $H$ is a unitary matrix in ${\mathbb {DC}}^{(n-k) \times (n-k)}$. This completes the proof.
\end{proof}


Let $A = A_{st} + A_\I \epsilon = (a_{ij}) \in {\mathbb {DC}}^{m \times n}$.  Define the Frobenius norm of $A$ as
\begin{equation}
\|A \|_F = \left\{\begin{aligned}\sqrt{\sum_{i=1}^m \sum_{j=1}^n |a_{ij}|^2}, & \ {\rm if}\ A_{st} \not = O, \\
\|A_\I\|_F\epsilon, & \ {\rm otherwise.} \end{aligned}\right.
\end{equation}

The Frobenius norm of a matrix is actually the $2$-norm of the vectorization of that matrix.  Thus, it has all the properties of a vector norm.

If $A \in {\mathbb {DC}}^{n \times n}$ and $\vx \in {\mathbb {DC}}^n$, then
$$\|A\vx\|_2 \le \|A\|_F \|\vx\|_2.$$
This property can be proved directly.

We also have the following proposition.

\begin{Prop} \label{pp3.2}
Suppose that $U \in {\mathbb{DC}}^{m \times n}$ is partially unitary, and $\vx \in {\mathbb {DC}}^n$.   Then
\begin{equation} \label{eq5}
\|U\vx\|_2 = \|\vx\|_2.
\end{equation}
\end{Prop}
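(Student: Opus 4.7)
The plan is to split the argument into two cases according to whether $\vx$ is appreciable or infinitesimal, because the $2$-norm is defined piecewise along exactly this dichotomy.

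First I would verify the algebraic identity $\|\vy\|_2^2 = \vy^*\vy$ whenever $\vy \in \mathbb{DC}^n$ is appreciable. Indeed, from Proposition \ref{p2.1} we have $|y_i|^2 = y_i\bar y_i$ for every component (the appreciable components are handled by equation (\ref{e8}), while the infinitesimal components contribute $0$ to both sides), so summation gives $\sum_i |y_i|^2 = \vy^*\vy$. This is the workhorse identity.

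Case 1 (appreciable $\vx$): I would first observe that $U\vx$ is appreciable. Writing $U = U_{st} + U_\I\epsilon$, the standard part of $U\vx$ is $U_{st}\vx_{st}$; by Proposition \ref{pp3.1}, $U_{st}$ is a complex partially unitary matrix, so its columns are linearly independent and $U_{st}\vx_{st} \ne 0$ because $\vx_{st} \ne 0$. Hence both $\vx$ and $U\vx$ are appreciable, so by the identity above
\begin{equation*}
\|U\vx\|_2^2 = (U\vx)^*(U\vx) = \vx^* U^*U\vx = \vx^* I_n \vx = \vx^*\vx = \|\vx\|_2^2.
\end{equation*}
Both sides are nonnegative dual numbers with positive standard part, so taking the dual-number square root as defined in (1) yields $\|U\vx\|_2 = \|\vx\|_2$.

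Case 2 (infinitesimal $\vx$): Here $\vx = \vx_\I\epsilon$, and direct computation gives $U\vx = U_{st}\vx_\I\,\epsilon$, which is again infinitesimal. Applying the definition (\ref{2-norm}) in the infinitesimal branch,
\begin{equation*}
\|U\vx\|_2 = \|U_{st}\vx_\I\|_2\,\epsilon = \|\vx_\I\|_2\,\epsilon = \|\vx\|_2,
\end{equation*}
where the middle equality is the standard fact that a complex partially unitary matrix preserves the Euclidean $2$-norm, together with Proposition \ref{pp3.1}.

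I do not expect a real obstacle; the only delicate point is making sure the identity $\|\vy\|_2^2 = \vy^*\vy$ is used only on appreciable vectors, since the piecewise definition of the $2$-norm breaks this identity in the infinitesimal branch (there $\vy^*\vy = 0$ but $\|\vy\|_2$ may be nonzero and of order $\epsilon$). Treating the infinitesimal case separately, as above, sidesteps this issue cleanly.
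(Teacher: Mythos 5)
Your proposal is correct and follows essentially the same route as the paper's proof: split into appreciable versus infinitesimal cases, use the identity $\|\vy\|_2^2 = \vy^*\vy$ for appreciable $\vy$ together with $U^*U = I_n$ in the first case, and reduce to the complex partial-unitary norm preservation applied to $U_{st}\vx_\I$ in the second. Your explicit justification that $U_{st}\vx_{st} \neq \0$ (via linear independence of the columns of the complex partially unitary $U_{st}$) and the careful remark about taking the dual-number square root are slightly more detailed than the paper's, but the argument is identical in substance.
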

\begin{proof}   Suppose that $\vx = \vx_{st}+\vx_\I \epsilon$ is appreciable.   If $x_i$ is appreciable, then by (\ref{2-norm}), $|x_i|^2 = x_ix_i^*=x_i^*x_i$.   If $x_i$ is infinitesimal, then
$|x_i|^2 = 0 = x_i^*x_i$.   Thus, by (\ref{2-norm}),
$$\|\vx\|_2^2 = \sum_{i=1}^n |x_i|^2 = \vx^*\vx.$$
On the other hand, let $U = U_{st}+U_\I\epsilon$.  Direct calculations lead to $U_{st}^* U_{st}=I_n$.  
Then the standard part of $U\vx$ is $U_{st}\vx_{st} \not = \0$, i.e., $U\vx$ is also appreciable.   We have
$$\|U\vx\|_2^2 = (U\vx)^*(U\vx) = \vx^*U^*U\vx = \vx^*\vx.$$
Hence, $\|U\vx\|_2 = \|\vx\|_2$ in this case.

Now, assume that $\vx$ is infinitesimal.   Then $\vx = \vx_\I\epsilon$, and $U\vx = U_{st}\vx_\I\epsilon$ is also infinitesimal.  We have $\|U_{st}\vx_\I\|_2 = \|\vx_\I\|_2$. Then by (\ref{2-norm}), we still have
$\|U\vx\|_2 = \|\vx\|_2$ in this case.   This proves (\ref{eq5}).
\end{proof}

We are ready to establish unitary invariance of dual complex matrices.
We have the following theorem.

\begin{Thm} \label{tt3.3}
Suppose that $U \in {\mathbb{DC}}^{m \times m}$ and $V \in {\mathbb{DC}}^{n \times n}$ are unitary, and $A \in {\mathbb {DC}}^{m \times n}$.   Then
\begin{equation} \label{eq8}
\|UAV\|_F = \|A\|_F.
\end{equation}
\end{Thm}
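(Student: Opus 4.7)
The plan is to split the claim into two one-sided unitary invariance statements and combine them: I will first show $\|UB\|_F = \|B\|_F$ for any unitary $U \in \mathbb{DC}^{m \times m}$ and any $B \in \mathbb{DC}^{m \times n}$, and then show $\|AV\|_F = \|A\|_F$ for any unitary $V \in \mathbb{DC}^{n \times n}$. Chaining the two gives $\|UAV\|_F = \|U(AV)\|_F = \|AV\|_F = \|A\|_F$.

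For the left-multiplication identity I will argue column by column. Writing $B = [\vb_1, \ldots, \vb_n]$ so that $UB = [U\vb_1, \ldots, U\vb_n]$, the first small step is to check the identity $\|B\|_F^2 = \sum_j \|\vb_j\|_2^2$ in the appreciable case by regrouping the double sum in the definition of $\|B\|_F$ column-wise; for any infinitesimal column $\vb_j$ both $\sum_i |b_{ij}|^2$ and $\|\vb_j\|_2^2$ equal $0$, so such columns contribute equally on both sides. Since $U_{st}$ is complex unitary by Proposition \ref{pp3.1}, it is invertible and hence $UB$ is appreciable whenever $B$ is, and Proposition \ref{pp3.2} applied to each $\vb_j$ yields $\|U\vb_j\|_2 = \|\vb_j\|_2$. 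In the infinitesimal case $B = B_\I \epsilon$, one writes $UB = U_{st} B_\I \epsilon$ and concludes from the classical complex unitary invariance of the Frobenius norm that $\|UB\|_F = \|U_{st}B_\I\|_F \epsilon = \|B_\I\|_F \epsilon = \|B\|_F$.

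For the right-multiplication part I will pass to the conjugate transpose. A direct inspection of the definition of $\|\cdot\|_F$, in both cases, shows $\|C\|_F = \|C^*\|_F$ for every $C \in \mathbb{DC}^{p \times q}$, since conjugation-transposition only permutes and conjugates entries. By Proposition \ref{p2.0}, $V^*$ is also unitary, and applying the left-multiplication identity already proved to $V^*$ and $A^*$ yields
\begin{equation*}
\|AV\|_F = \|(AV)^*\|_F = \|V^* A^*\|_F = \|A^*\|_F = \|A\|_F.
\end{equation*}

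The main obstacle, such as it is, is the bookkeeping demanded by the two-case definitions of $\|\cdot\|_F$ and $\|\cdot\|_2$. I will need to verify (i) that appreciability of $B$ is preserved under left multiplication by a unitary $U$, and (ii) that the column-wise regrouping $\|B\|_F^2 = \sum_j \|\vb_j\|_2^2$ remains valid when some columns are infinitesimal. Once these two checks are in place, the argument reduces to invoking Proposition \ref{pp3.2} column by column and juggling the two one-sided identities.
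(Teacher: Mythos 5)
Your proposal is correct and takes essentially the same route as the paper. Both reduce the claim to Proposition~\ref{pp3.2} by viewing the Frobenius norm column-wise (the paper bundles the columns into a single $mn$-vector and applies a block-diagonal unitary $\mathrm{bldg}(U)$, while you regroup $\|B\|_F^2$ as $\sum_j\|\vb_j\|_2^2$ and apply the proposition per column), and your explicit conjugate-transpose argument for the right factor $V$ simply fills in what the paper dispatches with ``similarly''.
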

\begin{proof}
Write the columns of $A$ as $\va^{(1)}, \cdots, \va^{(n)}$,
$${\rm vec}(A) = \begin{bmatrix} \va^{(1)}\\ \vdots \\ \va^{(n)}\end{bmatrix},$$
and
$${\rm bldg}(U) = {\rm diag}(U, \cdots, U) \in {\mathbb {DC}}^{mn \times mn}.$$
Then ${\rm bldg}(U)$ is an $mn \times mn$ unitary matrix, and ${\rm vec}(A) \in {\mathbb {DC}}^{mn}$.
We have $\|A\|_F = \|{\rm vec}(A)\|_2$ and $\|UA\|_F = \|{\rm bldg}(U){\rm vec}(A)\|_2$.  By Proposition \ref{pp3.2}, we have
$$\|{\rm bldg}(U){\rm vec}(A)\|_2 = \|{\rm vec}(A)\|_2.$$
Thus, $\|UA\|_F = \|A\|_F$.   Similarly, we may show that $\|AV\|_F = \|A\|_F$.  The conclusion follows.
\end{proof}

\section{Eigenvalues of Dual Complex Matrices}

Suppose that $A \in {\mathbb {DC}}^{n \times n}$.   If there are $\lambda \in \mathbb {DC}$ and $\vx \in
{\mathbb {DC}}^n$, where $\vx$ is appreciable, such that
\begin{equation} \label{e2}
A\vx = \lambda\vx,
\end{equation}
then we say that $\lambda$ is an eigenvalue of $A$, with $\vx$ as a corresponding eigenvector.
Here, we request that $\vx$ is appreciable.


We now establish conditions for a dual complex number to be an eigenvalue of a square dual complex matrix.


\begin{Thm} \label{t3.2}
Suppose that $A = A_{st}+A_\I\epsilon \in {\mathbb {DC}}^{n \times n}$.   Then $\lambda = \lambda_{st} + \lambda_\I \epsilon$ is an eigenvalue of $A$ with an eigenvector $\vx = \vx_{st}+\vx_I\epsilon$ only if $\lambda_{st}$ is an eigenvalue of the complex matrix $A_{st}$ with an
eigenvector $\vx_{st}$, i.e., $\vx_{st} \not = \0$ and
\begin{equation} \label{e3}
A_{st}\vx_{st} = \lambda_{st} \vx_{st}.
\end{equation}
Furthermore, if $\lambda_{st}$ is an eigenvalue of the complex matrix $A_{st}$ with an
eigenvector $\vx_{st}$, then $\lambda$ is an eigenvalue of $A$ with an eigenvector $\vx$ if and only if $\lambda_\I$ and $\vx_\I$ satisfy
\begin{equation} \label{e4}
\lambda_\I\vx_{st} = A_\I \vx_{st} + A_{st}\vx_\I - \lambda_{st} \vx_\I.
\end{equation}
\end{Thm}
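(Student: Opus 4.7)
The plan is to substitute the dual decompositions $A = A_{st} + A_\I\epsilon$, $\vx = \vx_{st} + \vx_\I\epsilon$, $\lambda = \lambda_{st} + \lambda_\I\epsilon$ directly into the eigenvalue equation $A\vx = \lambda\vx$, expand using $\epsilon^2 = 0$, and compare the standard and infinitesimal parts separately. Since dual complex numbers form a commutative algebra (as noted after equation (\ref{ee1})), all the scalar-matrix-vector products commute past $\epsilon$ in the expected way, and the calculation is purely mechanical.

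Concretely, expansion gives
\begin{equation*}
A\vx = A_{st}\vx_{st} + \bigl(A_{st}\vx_\I + A_\I\vx_{st}\bigr)\epsilon, \qquad \lambda\vx = \lambda_{st}\vx_{st} + \bigl(\lambda_{st}\vx_\I + \lambda_\I\vx_{st}\bigr)\epsilon.
\end{equation*}
Equating the standard parts yields (\ref{e3}), and equating the infinitesimal parts, after transposing $\lambda_{st}\vx_\I$ to the right-hand side, yields (\ref{e4}). For the forward direction of the first claim, I will invoke the hypothesis that $\vx$ is appreciable, which by definition means $\vx_{st} \neq \0$; together with (\ref{e3}) this is exactly the statement that $\lambda_{st}$ is an eigenvalue of the complex matrix $A_{st}$ with eigenvector $\vx_{st}$.

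For the ``furthermore'' part, both directions are essentially trivial once we have the component decomposition above: if $\lambda_{st}$ and $\vx_{st}$ are a complex eigenpair of $A_{st}$, then the standard part of $A\vx - \lambda\vx$ vanishes automatically, so the dual equation $A\vx = \lambda\vx$ reduces to the vanishing of the infinitesimal part, which is precisely (\ref{e4}). Conversely, if (\ref{e4}) holds in addition to (\ref{e3}), both coefficients in the dual expansion of $A\vx - \lambda\vx$ are zero, so $\lambda$ is an eigenvalue of $A$ with eigenvector $\vx$ (the appreciability of $\vx$ being guaranteed by $\vx_{st} \neq \0$).

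There is really no substantive obstacle here; the only subtlety worth flagging is the appreciability requirement built into the definition of eigenvector, which is what forces $\vx_{st} \neq \0$ and thereby upgrades the standard-part identity to a genuine complex eigenvalue relation. The write-up will therefore be short: one display to expand $A\vx$ and $\lambda\vx$ in the dual basis, one sentence matching standard parts (and noting the appreciability condition), and one sentence matching infinitesimal parts to read off (\ref{e4}).
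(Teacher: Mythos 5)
Your proposal is correct and follows essentially the same route as the paper's proof: expand $A\vx = \lambda\vx$ in the dual decomposition, match standard parts to obtain (\ref{e3}) and infinitesimal parts to obtain (\ref{e4}), and invoke the appreciability of $\vx$ to guarantee $\vx_{st}\neq\mathbf{0}$. There is nothing to add.
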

\begin{proof}
By definition, $\lambda$ is an eigenvalue of $A$ with an eigenvector $\vx$ if and only if $\vx_{st} \not = \0$ and $A\vx = \lambda \vx$.   Then $A\vx = \lambda \vx$ is equivalent to
$$(A_{st}+A_\I\epsilon)(\vx_{st}+\vx_I\epsilon) = (\lambda_{st} + \lambda_\I \epsilon)(\vx_{st}+\vx_I\epsilon).$$
This is further equivalent to $A_{st}\vx_{st} = \vx_{st}\lambda_{st}$, i.e., (\ref{e3}), and
\begin{equation} \label{e5}
A_{st}\vx_\I\epsilon + A_\I \vx_{st}\epsilon = \lambda_\I\vx_{st} + \lambda_{st}\vx_\I \epsilon.
\end{equation}
Then (\ref{e5}) is equivalent to
$$A_{st}\vx_\I + A_\I\vx_{st} =  \lambda_\I\vx_{st} +\lambda_{st} \vx_\I,$$
which is further equivalent to (\ref{e4}).    The conclusions of this theorem follow from
these.
\end{proof}



Suppose that $A \in {\mathbb {DC}}^{n \times n}$ is a Hermitian matrix.  For any $\vx \in {\mathbb {DC}}^n$, we have
$$(\vx^*A\vx)^* = \vx^*A\vx.$$
This implies that $\vx^*A\vx$ is a dual number.  With the total order of dual numbers defined in Section 2, we may define positive semidefiniteness and positive definiteness of Hermitian matrices in ${\mathbb {DC}}^{n \times n}$.  A Hermitian matrix $A \in {\mathbb {DC}}^{n \times n}$ is called positive semidefinite if for any $\vx \in {\mathbb {DC}}^n$, $\vx^*A\vx \ge 0$; $A$ is called positive definite if for any $\vx \in {\mathbb {DC}}^n$ with $\vx$ being appreciable,  we have $\vx^*A\vx > 0$ and is appreciable.

\begin{Thm} \label{t3.2.1}
An eigenvalue $\lambda$ of a Hermitian matrix $A = A_{st} + A_\I\epsilon \in {\mathbb {DC}}^{n \times n}$ must be a dual number, 
and its standard part $\lambda_{st}$ is an eigenvalue of the complex Hermitian matrix $A_{st}$.   Furthermore, assume that $\lambda = \lambda_{st} + \lambda_\I \epsilon$ is an eigenvalue of $A$ with a corresponding eigenvector
$\vx = \vx_{st} + \vx_\I \epsilon \in {\mathbb {DC}}^n$ 
where $\vx_{st}, \vx_\I \in {\mathbb C}^n$.   Then we have
\begin{equation} \label{e7.1}
\lambda_\I = {\vx_{st}^* A_\I \vx_{st} \over \vx_{st}^*\vx_{st}}.
\end{equation}

A Hermitian matrix $A \in {\mathbb {DC}}^{n \times n}$ has at most $n$ dual number eigenvalues and no other eigenvalues.

An eigenvalue of a positive semidefinite Hermitian matrix $A \in {\mathbb {DC}}^{n \times n}$ must be a nonnegative dual number.   In that case, $A_{st}$ must be positive semidefinite.
An eigenvalue of a positive definite Hermitian matrix $A \in {\mathbb {DC}}^{n \times n}$ must be an appreciable positive dual number.   In that case, $A_{st}$ must be positive definite.
\end{Thm}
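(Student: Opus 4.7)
The approach is to reduce every claim to a statement about the complex Hermitian matrix $A_{st}$ and then pull information back to $A$ using Theorem \ref{t3.2} together with the Hermiticity of $A_\I$. First I would check that every eigenvalue $\lambda$ is a dual number. Theorem \ref{t3.2} gives that $\lambda_{st}$ is an eigenvalue of the complex Hermitian matrix $A_{st}$, so $\lambda_{st} \in \mathbb R$. To pin down $\lambda_\I$, I would left-multiply the compatibility equation (\ref{e4}) by $\vx_{st}^*$. Since $A_{st}^* = A_{st}$ and $A_{st}\vx_{st} = \lambda_{st}\vx_{st}$ with $\lambda_{st} \in \mathbb R$, one has $\vx_{st}^* A_{st} = \lambda_{st}\vx_{st}^*$, so the terms carrying $\vx_\I$ cancel and I am left with
$$\lambda_\I \vx_{st}^* \vx_{st} = \vx_{st}^* A_\I \vx_{st}.$$
Appreciability of $\vx$ gives $\vx_{st}^*\vx_{st} > 0$, and Hermiticity of $A_\I$ makes the right-hand side real; solving for $\lambda_\I$ simultaneously proves formula (\ref{e7.1}) and that $\lambda \in \mathbb D$.

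For the count, I would fix an eigenvalue $\mu$ of $A_{st}$ and let $V_\mu \subseteq \mathbb C^n$ denote its eigenspace, of dimension $d_\mu$. Rewrite (\ref{e4}) as
$$(A_{st} - \mu I)\vx_\I = \lambda_\I \vx_{st} - A_\I \vx_{st}.$$
Since $A_{st}$ is Hermitian, $\mathrm{range}(A_{st} - \mu I) = V_\mu^\perp$, so solvability in $\vx_\I$ is equivalent to the projected identity $P_{V_\mu} A_\I|_{V_\mu} \vx_{st} = \lambda_\I \vx_{st}$ on $V_\mu$. The operator $P_{V_\mu} A_\I|_{V_\mu}$ is Hermitian on the $d_\mu$-dimensional space $V_\mu$, so it contributes at most $d_\mu$ real values of $\lambda_\I$. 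Summing over the distinct eigenvalues of $A_{st}$ gives at most $\sum_\mu d_\mu = n$ eigenvalues of $A$, and by the first part these exhaust all possible eigenvalues.

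For the (positive) (semi)definite parts, I would use the identity $\lambda = (\vx^* A \vx)/(\vx^* \vx)$, which is valid because $\vx^*\vx$ is appreciable and hence invertible in $\mathbb D$. If $A$ is positive semidefinite the numerator is $\ge 0$ and the denominator is appreciable positive, so $\lambda \ge 0$ in the total order on $\mathbb D$; positive definiteness upgrades this to $\lambda > 0$ and appreciable. The converse statements about $A_{st}$ follow by testing with a vector $\vy \in \mathbb C^n$ regarded as a dual vector with zero infinitesimal part: then $\vy^* A \vy = \vy^* A_{st}\vy + (\vy^* A_\I \vy)\epsilon$, and reading off the standard part gives positive (semi)definiteness of $A_{st}$.

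The main obstacle will be the counting step: the key insight is to recognize that the correct compatibility condition on (\ref{e4}) is projection onto the eigenspace $V_\mu$ of $A_{st}$ at $\mu$, and that the resulting operator $P_{V_\mu} A_\I|_{V_\mu}$ is again Hermitian, which is what bounds how many $\lambda_\I$ can be produced on top of each $\mu$. Once the reality of $\lambda_\I$ and the formula (\ref{e7.1}) are in hand, the remaining assertions reduce to routine manipulations with the dual order and with the quadratic form $\vx^* A \vx$.
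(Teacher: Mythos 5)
Your derivation of the formula $\lambda_\I \vx_{st}^*\vx_{st} = \vx_{st}^*A_\I\vx_{st}$ is essentially the paper's argument: the paper left-multiplies $A\vx=\lambda\vx$ by $\vx^*$ and extracts the infinitesimal part of the resulting scalar identity, whereas you left-multiply the infinitesimal-part equation (\ref{e4}) by $\vx_{st}^*$; both routes exploit $A_{st}\vx_{st}=\lambda_{st}\vx_{st}$, $\vx_{st}^*A_{st}=\lambda_{st}\vx_{st}^*$ and $\lambda_{st}\in\mathbb{R}$ to cancel the $\vx_\I$ terms, and are interchangeable. Where you genuinely add content is in the last two paragraphs. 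The paper dismisses the counting and the positive-(semi)definiteness assertions with ``follow from Theorem~\ref{t3.2} and matrix theory,'' while you supply the actual arguments. Your counting step is a clean way to get the bound without invoking the unitary decomposition of Theorem~\ref{t4.1} (which comes later in the paper): for each eigenvalue $\mu$ of $A_{st}$ with eigenspace $V_\mu$, solvability of $(A_{st}-\mu I)\vx_\I = \lambda_\I\vx_{st} - A_\I\vx_{st}$ forces the right-hand side into $V_\mu^\perp$, reducing $\lambda_\I$ to an eigenvalue of the compressed Hermitian operator $P_{V_\mu}A_\I|_{V_\mu}$, which has at most $\dim V_\mu$ distinct eigenvalues; summing over $\mu$ gives at most $n$. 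The Rayleigh-quotient identity $\lambda=\vx^*A\vx/\vx^*\vx$ (valid since $\vx^*\vx$ is appreciable, hence invertible in $\mathbb{D}$) and the restriction to dual vectors with zero infinitesimal part for the converse about $A_{st}$ are both correct routine arguments. In short: same core computation, with you filling in exactly the details the paper leaves implicit, and the eigenspace-projection count is the one nontrivial piece you had to invent.
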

\begin{proof}   Suppose that $A \in {\mathbb {DC}}^{n \times n}$ is a Hermitian matrix, and $\lambda$ is an eigenvalue of $A$, with $\vx$ as the corresponding eigenvector.    Then we have $A\vx = \lambda \vx$, and $\vx$ is appreciable.    We have
$$\vx^*A\vx = \lambda \vx^*\vx.$$
As $\vx = \vx_{st} + \vx_\I \epsilon$, $A = A_{st} + A_\I \epsilon$, and $\lambda = \lambda_{st} + \lambda_\I \epsilon$, considering the infinitesimal part of the equality,  we have
$$\lambda_{st}(\vx_{\I}^* \vx_{st} + \vx_{st}^*\vx_\I ) + \lambda_\I\vx_{st}^*\vx_{st} =
\vx_\I^*A_{st}\vx_{st} + \vx_{st}^*A_{st}\vx_\I + \vx_{st}^*A_\I \vx_{st}.$$
Since $A_{st}\vx_{st} = \lambda_{st}\vx_{st}$, $\vx_{st}^*A_{st} = \lambda_{st}\vx_{st}^*$ and $\lambda_{st}$ is a real number, the above equality reduces to
$$\lambda_\I\vx_{st}^*\vx_{st} = \vx_{st}^*A_\I \vx_{st},$$
which proves (\ref{e7.1}). Hence, $\lambda$ is a dual number and an eigenvalue of $A$.

The other conclusions follow from Theorem \ref{t3.2} and matrix theory.
\end{proof}

Consider eigenvectors of a dual complex Hermitian matrix, associated with two  eigenvalues with distinct standard parts.   We have the following proposition.

\begin{Prop} \label{p3.4.1}
Two eigenvectors of a Hermitian matrix $A \in {\mathbb {DC}}^{n \times n}$, associated with two  eigenvalues with distinct standard parts, are orthogonal to each other.
\end{Prop}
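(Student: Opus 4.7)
The plan is to mimic the standard Hilbert-space orthogonality argument, using $A^* = A$ to swap $A$ onto the other eigenvector and produce a scalar identity of the form $(\lambda - \bar\mu)\,\vy^*\vx = 0$, then extract $\vy^*\vx = 0$ by invoking the appreciability of $\lambda - \bar\mu$.

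Concretely, let $\vx, \vy \in \mathbb{DC}^n$ be appreciable eigenvectors with $A\vx = \lambda\vx$ and $A\vy = \mu\vy$, where $\lambda = \lambda_{st}+\lambda_\I\epsilon$, $\mu = \mu_{st}+\mu_\I\epsilon$, and $\lambda_{st}\neq\mu_{st}$. First I would compute $\vy^*A\vx$ in two ways, exploiting commutativity of dual complex scalar multiplication and the identity $(AB)^*=B^*A^*$ from (\ref{commu}): on the one hand it equals $\lambda\,\vy^*\vx$; on the other hand, $\vy^*A\vx = \vy^*A^*\vx = (A\vy)^*\vx = (\mu\vy)^*\vx = \bar\mu\,\vy^*\vx$. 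Subtracting gives
\[
(\lambda - \bar\mu)\,\vy^*\vx = 0.
\]

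Next I would observe, via Theorem \ref{t3.2.1}, that $\mu_{st}$ is an eigenvalue of the complex Hermitian matrix $A_{st}$ and hence is real, so $\bar\mu_{st} = \mu_{st}$ and the standard part of $\lambda - \bar\mu$ equals $\lambda_{st} - \mu_{st} \neq 0$; that is, $\lambda - \bar\mu$ is an appreciable dual complex number. I would then invoke the elementary fact that in $\mathbb{DC}$, if $pq = 0$ with $p$ appreciable, then $q = 0$: writing $p = p_{st}+p_\I\epsilon$ and $q = q_{st}+q_\I\epsilon$ with $p_{st}\neq 0$, the standard part of $pq$ gives $p_{st}q_{st}=0$ hence $q_{st}=0$, and the infinitesimal part then forces $p_{st}q_\I = 0$, hence $q_\I = 0$. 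Applied with $p = \lambda-\bar\mu$ and $q = \vy^*\vx$ (componentwise, or by the same argument on the single dual complex scalar $\vy^*\vx$), this yields $\vy^*\vx = 0$, i.e.\ $\vx$ and $\vy$ are orthogonal.

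The only subtlety — and the step most likely to require care in the write-up — is the cancellation step: the ring $\mathbb{DC}$ has zero divisors (every infinitesimal times $\epsilon$ is zero), so the usual complex-scalar reasoning fails in general, and the argument genuinely uses that $\lambda-\bar\mu$ has nonzero standard part. Everything else is a direct transcription of the Hermitian orthogonality proof, with the Hermiticity $A^*=A$ and reality of $\mu_{st}$ (guaranteed by Theorem \ref{t3.2.1}) providing the two ingredients needed to make $\lambda-\bar\mu$ appreciable.
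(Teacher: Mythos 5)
Your proof is correct and takes essentially the same route as the paper's: compute the scalar $\vy^*A\vx$ two ways and cancel the appreciable factor. The paper first invokes Theorem \ref{t3.2.1} to replace $\bar\mu$ by $\mu$ (since eigenvalues of a Hermitian dual complex matrix are dual numbers) and then simply asserts $(\lambda-\mu)^{-1}$ exists, whereas you keep $\bar\mu$, use only the reality of $\mu_{st}$, and spell out the cancellation lemma for appreciable elements explicitly --- a harmless variant, and a useful clarification given that $\mathbb{DC}$ has zero divisors.
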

\begin{proof}
Suppose that $\vx$ and $\vy$ are two  eigenvectors of a Hermitian matrix $A \in {\mathbb {DC}}^{n \times n}$, associated with two eigenvalues $\lambda = \lambda_{st} + \lambda_\I \epsilon$ and $\mu = \mu_{st} + \mu_\I \epsilon$, respectively, and $\lambda_{st} \not = \mu_{st}$.   By Theorem \ref{t3.2}, $\lambda$ and $\mu$ are dual numbers.
We have
$$\lambda(\vx^*\vy) = (\lambda\vx)^*\vy = (A\vx)^*\vy = \vx^*A\vy = \vx^*\mu\vy = \mu \vx^*\vy,$$
i.e.,
$$(\lambda - \mu )(\vx^*\vy) = 0.$$
Since $\lambda_{st} \not = \mu_{st}$, $(\lambda - \mu)^{-1}$ exists.   We have $\vx^*\vy = 0$.
\end{proof}

The following is the unitary decomposition theorem of dual complex matrices.

\begin{Thm} \label{t4.1}
Suppose that $A=A_{st}+A_\I \in {\mathbb {DC}}^{n \times n}$ is a Hermitian matrix.  Then there are unitary matrix $U \in {\mathbb {DC}}^{n \times n}$ and a diagonal matrix $\Sigma \in {\mathbb {D}}^{n \times n}$ such that $\Sigma = U^*AU$, where
\begin{equation} \label{eee1.1}
\Sigma\equiv {\rm diag}\left(\lambda_1+\lambda_{1,1}\epsilon,\cdots, \lambda_1+\lambda_{1,k_1}\epsilon, \lambda_2+\lambda_{2,1}\epsilon,\cdots, \lambda_r+\lambda_{r,k_r}\epsilon\right).
\end{equation}
with the diagonal entries of $\Sigma$ being $n$  eigenvalues of $A$,
\begin{equation} \label{eee2.2}
A\vu_{i, j} = (\lambda_i +\lambda_{i, j}\epsilon)\vu_{i, j},
\end{equation}
for $j = 1, \cdots, k_i$ and $i = 1, \cdots, r$, $U = (\vu_{1,1}, \cdots, \vu_{1, k_1}, \cdots, \vu_{r, k_r})$,
$\lambda_1 > \lambda_2 > \cdots > \lambda_r$ are real numbers, $\lambda_i$ is a $k_i$-multiple eigenvalue of $A_{st}$, $\lambda_{i, 1} \ge \lambda_{i, 2} \ge \cdots \ge \lambda_{i, k_i}$ are also real numbers, $\sum_{i=1}^r k_i = n$.   Counting possible multiplicities $\lambda_{i, j}$, the form $\Sigma$ is unique.
\end{Thm}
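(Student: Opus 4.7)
The plan is to build the unitary $U$ eigenspace-by-eigenspace, using the spectral decomposition of $A_{st}$ as the skeleton and the compressed copy of $A_\I$ inside each eigenspace to pin down the infinitesimal parts of both the eigenvalues and the eigenvectors.

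First I would apply standard complex spectral theory to the Hermitian matrix $A_{st}$, obtaining the distinct real eigenvalues $\lambda_1>\cdots>\lambda_r$ with multiplicities $k_1,\ldots,k_r$ summing to $n$, together with matrices $V_i\in{\mathbb C}^{n\times k_i}$ whose columns form an orthonormal basis of the $\lambda_i$-eigenspace of $A_{st}$. Since $A_\I$ is also complex Hermitian, the compressed block $V_i^*A_\I V_i\in{\mathbb C}^{k_i\times k_i}$ is Hermitian; diagonalize it as $V_i^*A_\I V_i=W_i\Lambda_i W_i^*$ with $W_i\in{\mathbb C}^{k_i\times k_i}$ unitary and $\Lambda_i=\mathrm{diag}(\lambda_{i,1},\ldots,\lambda_{i,k_i})$, real and ordered $\lambda_{i,1}\ge\cdots\ge\lambda_{i,k_i}$. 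Replacing $V_i$ by $\tilde V_i:=V_iW_i$ yields a new orthonormal basis of the same $\lambda_i$-eigenspace satisfying $\tilde V_i^*A_\I\tilde V_i=\Lambda_i$, so in particular $\tilde v_{i,l}^*A_\I \tilde v_{i,j}=\lambda_{i,j}\delta_{lj}$.

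Next I would use Theorem \ref{t3.2} to construct the dual complex eigenvectors. For each $(i,j)$, set $\vu_{i,j}=\tilde v_{i,j}+\vz_{i,j}\epsilon$; the eigenvalue equation for $\lambda_i+\lambda_{i,j}\epsilon$ reduces by (\ref{e4}) to solving
\begin{equation*}
(A_{st}-\lambda_i I)\vz_{i,j}=\lambda_{i,j}\tilde v_{i,j}-A_\I\tilde v_{i,j}.
\end{equation*}
By the identity $\tilde v_{i,l}^*A_\I\tilde v_{i,j}=\lambda_{i,j}\delta_{lj}$, the right-hand side is orthogonal to every column of $\tilde V_i$, i.e.\ to the entire $\lambda_i$-eigenspace of $A_{st}$, hence lies in the range of $A_{st}-\lambda_i I$. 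Thus the system is solvable, and I choose the unique $\vz_{i,j}$ orthogonal to that eigenspace (for instance via the Moore--Penrose pseudoinverse of $A_{st}-\lambda_i I$). With this choice, $\vu_{i,j}$ is an eigenvector of $A$ associated with the eigenvalue $\lambda_i+\lambda_{i,j}\epsilon$.

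I would then verify that $U=(\vu_{1,1},\ldots,\vu_{r,k_r})$ is unitary. For eigenvectors with different standard parts of their eigenvalues, orthogonality is immediate from Proposition \ref{p3.4.1}. For two eigenvectors $\vu_{i,j},\vu_{i,l}$ within the same block $i$, a direct expansion gives $\vu_{i,j}^*\vu_{i,l}=\tilde v_{i,j}^*\tilde v_{i,l}+\bigl(\tilde v_{i,j}^*\vz_{i,l}+\vz_{i,j}^*\tilde v_{i,l}\bigr)\epsilon=\delta_{jl}$, since by construction each $\vz_{i,\cdot}$ is orthogonal to the $\lambda_i$-eigenspace; in particular $\|\vu_{i,j}\|_2=1$. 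Then $U^*AU$ is diagonal with entries as in (\ref{eee1.1}).

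For uniqueness, the distinct standard parts $\lambda_i$ and their multiplicities $k_i$ are forced by the spectral decomposition of $A_{st}$, and the infinitesimal parts $\{\lambda_{i,j}\}_{j=1}^{k_i}$ are exactly the eigenvalues of the block $V_i^*A_\I V_i$, which does not depend on the choice of orthonormal basis $V_i$ of the $\lambda_i$-eigenspace (a change of basis conjugates the block by a unitary matrix in ${\mathbb C}^{k_i\times k_i}$). I expect the main obstacle to be the inner consistency of step three: guaranteeing both solvability of the singular linear system for $\vz_{i,j}$ and orthonormality of the full collection $\{\vu_{i,j}\}$. Both pieces rest on the same key observation, namely that diagonalizing the compressed block $V_i^*A_\I V_i$ before building the eigenvectors makes the right-hand side of the equation for $\vz_{i,j}$ lie precisely in the range of $A_{st}-\lambda_i I$, so the $\vz_{i,j}$ can be taken in the orthogonal complement of the eigenspace without spoiling the eigenvector equation.
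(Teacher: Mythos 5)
Your proof is correct, and it reaches the same diagonal form as the paper's, but the route is organized differently enough to be worth contrasting. The paper works matrix-wise: after conjugating by a complex unitary $W$ that diagonalizes $A_{st}$ into blocks $\lambda_i I_{k_i}$, it writes down an explicit dual unitary $P$ (with off-diagonal entries $\tfrac{C_{ij}}{\lambda_i-\lambda_j}\epsilon$) that kills all the cross-block coupling in $WAW^*$ at once, then diagonalizes the surviving Hermitian blocks $C_{ii}$ by a block-diagonal complex unitary $V$, and finally sets $U=(PW)^*V$. You instead work vector-by-vector in the spirit of first-order perturbation theory: you diagonalize the compressed blocks $V_i^*A_\I V_i$ first, then invoke Theorem~\ref{t3.2} to reduce each eigenvector equation to the singular linear system $(A_{st}-\lambda_i I)\vz_{i,j}=\lambda_{i,j}\tilde v_{i,j}-A_\I\tilde v_{i,j}$, whose solvability is exactly the orthogonality you arranged by diagonalizing the compressed block beforehand, and you pin $\vz_{i,j}$ down by taking the pseudoinverse solution. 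The two constructions produce the same $U$ (expanding your $\vz_{i,j}$ in the $A_{st}$-eigenbasis recovers precisely the $\tfrac{C_{ij}}{\lambda_i-\lambda_j}$ coefficients), so the difference is organizational rather than mathematical; what your version buys is a cleaner appeal to the Fredholm alternative and to Proposition~\ref{p3.4.1} for cross-block orthogonality, at the cost of having to verify unitarity of the assembled $U$ by hand, whereas the paper's version produces a unitary $U$ automatically as a product of three unitaries. Your uniqueness argument is the same as the paper's.
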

\begin{proof}   Suppose that $A \in {\mathbb {DC}}^{n \times n}$ is a dual complex Hermitian matrix.  Denote $A = A_{st} + A_\I \epsilon$, where $A_{st}, A_\I \in {\mathbb {C}}^{n \times n}$.  Then $A_{st}$ and $A_\I$ are complex Hermitian matrices.   Thus, there is an $n \times n$ complex unitary matrix $W$ and a real $n \times n$ diagonal matrix $D$ such that $D = WA_{st}W^*$.
Suppose that $D = {\rm diag}(\lambda_1I_{k_1}, \lambda_2I_{k_2}, \cdots, \lambda_rI_{k_r})$, where $\lambda_1 > \lambda_2 > \cdots > \lambda_r$, and $I_{k_i}$ is a $k_i \times k_i$ identity matrix, and $\sum_{i=1}^r k_i = n$.  Let $M = WAW^*$.  Then
\begin{eqnarray*}
&& M \\ & = & D + WA_\I W^*\epsilon\\
& = & \begin{bmatrix}
\lambda_1I_{k_1} +  C_{11}\epsilon &  C_{12}\epsilon & \cdots &   C_{1r} \epsilon\\
  C_{12}^* \epsilon& \lambda_2I_{k_2} +  C_{22}\epsilon & \cdots  &   C_{2r}\epsilon \\
\vdots & \vdots & \ddots & \vdots \\
  C_{1r}^* \epsilon&   C_{2r}^* \epsilon&  \cdots & \lambda_r I_{k_r} +  C_{rr}\epsilon
\end{bmatrix},
\end{eqnarray*}
where each $C_{ij}$ is a complex matrix of adequate dimensions, and each $C_{ii}$ is Hermitian.

Let
$$P = \begin{bmatrix}
I_{k_1} & { C_{12}\epsilon \over \lambda_1-\lambda_2} & \cdots & {  C_{1r}\epsilon \over \lambda_1-\lambda_r}\\
-{  C_{12}^*\epsilon \over \lambda_1-\lambda_2} & I_{k_2} & \cdots  & { C_{2r}\epsilon \over \lambda_2-\lambda_r} \\
\vdots & \vdots & \ddots & \vdots \\
-{ C_{1r}^* \epsilon\over \lambda_1-\lambda_r} & -{  C_{2r}^*\epsilon \over \lambda_2-\lambda_r}  &  \cdots & I_{k_r}
\end{bmatrix}.$$
Then
$$P^* = \begin{bmatrix}
I_{k_1} & -{  C_{12} \epsilon\over \lambda_1-\lambda_2} & \cdots & -{ C_{1r}\epsilon \over \lambda_1-\lambda_r}\\
{ C_{12}^*\epsilon \over \lambda_1-\lambda_2} & I_{k_2} & \cdots  & -{  C_{2r} \epsilon\over \lambda_2-\lambda_r} \\
\vdots & \vdots & \ddots & \vdots \\
{ C_{1r}^* \epsilon\over \lambda_1-\lambda_r} & { C_{2r}^* \epsilon\over \lambda_2-\lambda_r}  &  \cdots & I_{k_r}
\end{bmatrix}.$$
Direct calculations certify $PP^* = P^*P = I_n$ and
$$\Sigma'\equiv PMP^* =(PW)A(PW)^* = {\rm diag}(\lambda_1I_{k_1}+  C_{11} \epsilon, \lambda_2I_{k_2} +  C_{22}\epsilon, \cdots, \lambda_rI_{k_r}+ C_{rr}\epsilon).$$
Since $P$ and $W$ are unitary matrices, then so is $PW$.  
Noting that each $C_{ii}$ is a complex Hermitian matrix, by matrix theory, we can find unitary matrices $U_1\in {\mathbb{C}}^{k_1\times k_1}$, $\cdots$, $U_r\in {\mathbb{C}}^{k_r\times k_r}$ that diagonalize $C_{11}$, $\cdots$, $C_{rr}$, respectively. That is, there exist real numbers $\lambda_{1,1} \ge \cdots \ge \lambda_{1,k_1}$, $\lambda_{2,1} \ge \cdots \ge \lambda_{2, k_2}$, $\cdots$, $\lambda_{r,1} \ge \cdots \ge \lambda_{r,k_r}$ such that
\begin{equation}\label{blocks}
U_{i}^* C_{ii} U_{i} = {\rm diag}\left(\lambda_{i,1}, \cdots, \lambda_{i,k_i}\right), ~~i=1,\cdots, r.
\end{equation}
Denote $V \equiv {\rm diag}\left(U_1, \cdots, U_r\right)$. We can easily verify that $V$ is unitary. Thus, $U\equiv (PW)^*V$ is also unitary.  Denote
$$\Sigma\equiv {\rm diag}\left(\lambda_1+\lambda_{1,1}\epsilon,\cdots, \lambda_1+\lambda_{1,k_1}\epsilon, \lambda_2+\lambda_{2,1}\epsilon,\cdots, \lambda_r+\lambda_{r,k_r}\epsilon\right).$$
Then we have $U^*AU = \Sigma$, as required.  Letting $U = (\vu_{1,1}, \cdots, \vu_{1, k_1}, \cdots, \vu_{r, k_r})$, we have (\ref{eee2.2}). Thus, $\lambda_i+\lambda_{i,j}\epsilon$ are eigenvalues of $A$ with $\vu_{i,j}$ as the corresponding eigenvectors, for $j = 1, \cdots, k_i$ and $i=1, \cdots, r$.   

Note that those $\lambda_{i,j}$'s are all eigenvalues of $C_{ii}$'s, where $C_{ii} = W_iA_\I W_i^*$ with $W_i^*\in {\mathbb{C}}^{n\times k_i}$ the submatrix formed by an orthonormal basis of the eigenspace of $\lambda_{i}$ for $A_{st}$. To show the desired uniqueness of $\Sigma$, it suffices to show that for any other orthonormal basis of the right eigenspace of $\lambda_{i}$ for $A_{st}$, say $\hat{W}_i^*$, those $\lambda_{i,j}$'s are all eigenvalues of $\hat{C}_{ii}=\hat{W}_iA_\I \hat{W}_i^*$ as well. Observe that there exists a unitary matrix $T_i\in {\mathbb{C}}^{k_i\times k_i}$ such that $\hat{W}_i^* = W_i^*T_i$. Then
$$\hat{C}_{ii}  = T_i^* W_i A_\I W_i^* T_i = T_i^* C_{ii} T_i= T_i^*U_{i}{\rm diag}\left(\lambda_{i,1},\cdots, \lambda_{i,k_i}\right)U_{i}^*T_i.$$
Since $U_{i}^*T_i$ is also unitary, we have $\lambda_{i,j}$'s are all eigenvalues of $\hat{C}_{ii}$. The uniqueness is thus proved.
\end{proof}

By the above theorem, we have the following theorem.

\begin{Thm} \label{t4.4.1}
Suppose that $A \in {\mathbb {DC}}^{n \times n}$ is Hermitian.  Then $A$ has exactly $n$ eigenvalues, which are all dual numbers.  There are also $n$ eigenvectors, associated with these $n$ eigenvalues.   The Hermitian matrix $A$ is positive semidefinite or definite if and only if all of these eigenvalues are nonnegative, or positive and appreciable, respectively.
\end{Thm}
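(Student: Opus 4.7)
The plan is to derive all four claims directly from the unitary decomposition Theorem \ref{t4.1}, together with Theorem \ref{t3.2.1}.

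First I would invoke Theorem \ref{t4.1} to write $U^*AU = \Sigma$ with $U$ unitary and $\Sigma$ the dual diagonal matrix displayed in (\ref{eee1.1}). Reading this as $AU = U\Sigma$ and looking at columns immediately exhibits $n$ eigenvalues (the diagonal entries, all dual numbers by construction) with $n$ associated eigenvectors (the columns of $U$, which are appreciable because $U$ is unitary hence invertible). The content of the ``exactly $n$'' assertion is that there are no \emph{other} eigenvalues. For that I would argue: by Theorem \ref{t3.2.1}, every eigenvalue $\lambda = \lambda_{st}+\lambda_\I\epsilon$ of $A$ has $\lambda_{st}$ an eigenvalue of the complex Hermitian $A_{st}$, and the list of standard parts $\lambda_1,\dots,\lambda_r$ in (\ref{eee1.1}) already exhausts the spectrum of $A_{st}$ with correct multiplicities $k_i$. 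The infinitesimal part of any eigenvalue associated with standard part $\lambda_i$ is then, by formula (\ref{e7.1}), an eigenvalue of the compressed matrix $C_{ii}$ from the proof of Theorem \ref{t4.1}, so it must be one of $\lambda_{i,1},\dots,\lambda_{i,k_i}$. This rules out new eigenvalues.

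Next, for the positive (semi)definiteness characterization, the main tool is the change of variables $\vy = U^*\vx$. Since $U$ is unitary, $\vx \mapsto \vy$ is a bijection on $\mathbb{DC}^n$ preserving appreciability, and
\begin{equation}
\vx^*A\vx \;=\; \vy^*\Sigma\vy \;=\; \sum_{i=1}^r\sum_{j=1}^{k_i} (\lambda_i+\lambda_{i,j}\epsilon)\,|y_{i,j}|^2.
\end{equation}
For the ``if'' direction of semidefiniteness, if each $\lambda_i+\lambda_{i,j}\epsilon \ge 0$ and each $|y_{i,j}|^2\ge 0$ (by Proposition \ref{p2.1}), then the sum is $\ge 0$ in the total order on $\mathbb{D}$. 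For the ``only if'' direction, I would plug in $\vx = \vu_{i,j}$ to extract the eigenvalue $\lambda_i+\lambda_{i,j}\epsilon = \vu_{i,j}^*A\vu_{i,j} \ge 0$.

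For positive definiteness, the ``only if'' direction is the same substitution $\vx=\vu_{i,j}$, now yielding that each eigenvalue must be positive \emph{and} appreciable (since $\vu_{i,j}$ is appreciable, positive definiteness forces both). The ``if'' direction requires a little more care and is the one subtle point: given an appreciable $\vx$, I must argue $\vy = U^*\vx$ has some appreciable component $y_{i_0,j_0}$, whence $|y_{i_0,j_0}|^2$ is an appreciable positive dual number; combined with the hypothesis that $\lambda_{i_0}+\lambda_{i_0,j_0}\epsilon$ is appreciable positive, that single term in the sum is already appreciable and positive, and the remaining terms are nonnegative, so the total order makes $\vx^*A\vx$ appreciable and positive. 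The appreciability of $\vy$ follows from $\vx_{st} = U_{st}\vy_{st}$ with $U_{st}$ invertible (Proposition \ref{pp3.1} gives $U_{st}^*U_{st}=I_n$). This last bookkeeping about appreciability propagating through the unitary change of basis is the only place where one must be careful rather than simply mimicking the classical complex proof.
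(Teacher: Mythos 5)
Your proof is correct and follows exactly the route the paper intends; the paper itself proves this theorem in a single line by citing Theorem~\ref{t4.1}, so you are essentially supplying the details left implicit (the unitary decomposition exhibits $n$ eigenpairs, and positive (semi)definiteness is read off after the change of variables $\vy = U^*\vx$). Your appreciability bookkeeping in the positive-definite ``if'' direction, via $U_{st}^*U_{st}=I_n$ from Proposition~\ref{pp3.1}, is the right subtle point to isolate and is handled correctly.

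One step deserves tightening. For the ``no other eigenvalues'' claim you cite formula~(\ref{e7.1}), but that formula only expresses $\lambda_\I$ as a Rayleigh quotient $\vc^*C_{ii}\vc/\vc^*\vc$ (writing $\vx_{st}=W_i^*\vc$ in an orthonormal eigenbasis $W_i^*$ of the $\lambda_i$-eigenspace of $A_{st}$), which is not automatically an eigenvalue of $C_{ii}$. To close this, multiply the dual-part relation~(\ref{e4}), namely $\lambda_\I\vx_{st}=A_\I\vx_{st}+(A_{st}-\lambda_{st}I)\vx_\I$, on the left by $W_i$: since $W_i(A_{st}-\lambda_{st}I)=0$, the $\vx_\I$ term drops and one obtains $\lambda_\I\vc=C_{ii}\vc$, so $\vc$ is genuinely an eigenvector of $C_{ii}$ and $\lambda_\I\in\{\lambda_{i,1},\dots,\lambda_{i,k_i}\}$. (Alternatively, the ``at most $n$ dual-number eigenvalues and no others'' statement is already asserted in Theorem~\ref{t3.2.1} and could simply be cited.) With that addition the argument is complete.
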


\section{Singular Value Decomposition of Dual Complex Matrices}

We need the following theorem for SVD of dual complex matrices.

\begin{Thm}\label{special-Herm.1} Suppose that $B \in {\mathbb {DC}}^{m \times n}$ and $A = B^*B$. Then there exists a unitary matrix $U\in {\mathbb {DC}}^{n \times n}$ such that
\begin{equation}\label{block-diag.1}
U^* A U = {\rm diag}(\lambda_1+\lambda_{1,1}\epsilon, \cdots, \lambda_1+\lambda_{1,k_1}\epsilon, \lambda_2+\lambda_{2,1}\epsilon, \cdots, \lambda_s+\lambda_{s,k_s}\epsilon, 0, \cdots, 0),
\end{equation}
where $\lambda_1>\cdots >\lambda_s>0$, $\lambda_{1,1} \ge \cdots \ge \lambda_{1,k_1}$, $\cdots$, $\lambda_{s,1}\ge \cdots \ge \lambda_{s,k_s}$ are real numbers, $\sum_{i=1}^s k_i \le n$.    Counting possible multiplicities $\lambda_{i, j}$, the real numbers $\lambda_i$ and $\lambda_{i, j}$ for $i=1, \cdots, s$ and $j = 1, \cdots, k_i$ are uniquely determined.
\end{Thm}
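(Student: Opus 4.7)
The plan is to derive Theorem \ref{special-Herm.1} as a refinement of the unitary decomposition Theorem \ref{t4.1}, applied to the specific Hermitian matrix $A = B^*B$. First I would observe that $A^* = (B^*B)^* = B^*B = A$ by (\ref{commu}), so $A$ is Hermitian. Moreover, for any $\vx \in \mathbb{DC}^n$, $\vx^*A\vx = \vx^*B^*B\vx = \|B\vx\|_2^2 \geq 0$, and by Proposition 2.2 it is $0$ when $\vx = \0$. Hence $A$ is positive semidefinite. By Theorem \ref{t4.4.1}, all $n$ eigenvalues of $A$ are nonnegative dual numbers. Applying Theorem \ref{t4.1} to $A$ produces a unitary $U$ and a diagonal $\Sigma = U^*AU$ whose diagonal entries are the eigenvalues $\lambda_i + \lambda_{i,j}\epsilon$ with $\lambda_1 > \lambda_2 > \cdots > \lambda_r \geq 0$.

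The key extra claim—and the main obstacle—is to show that whenever some $\lambda_i$ equals $0$ (necessarily $i = r$, as the $\lambda_i$ are strictly decreasing), the associated infinitesimal parts $\lambda_{r,j}$ must vanish as well, so that the corresponding block collapses to exactly $0$'s in (\ref{block-diag.1}). For this, I would revisit the construction in the proof of Theorem \ref{t4.1}: the numbers $\lambda_{i,j}$ appear as eigenvalues of the Hermitian block $C_{ii} = W_i A_\I W_i^*$, where the columns of $W_i^*$ form an orthonormal basis of the $\lambda_i$-eigenspace of $A_{st}$. Writing $B = B_{st} + B_\I \epsilon$, a direct expansion of $B^*B$ gives $A_{st} = B_{st}^*B_{st}$ and
\begin{equation*}
A_\I = B_{st}^*B_\I + B_\I^*B_{st}.
\end{equation*}

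Now suppose $\lambda_r = 0$. Any $\vx_{st}$ in the $0$-eigenspace of $A_{st} = B_{st}^*B_{st}$ satisfies $\|B_{st}\vx_{st}\|_2^2 = \vx_{st}^*B_{st}^*B_{st}\vx_{st} = 0$, hence $B_{st}\vx_{st} = 0$. Therefore, for any $\vx_{st}, \vy_{st}$ in this eigenspace,
\begin{equation*}
\vy_{st}^* A_\I \vx_{st} = (B_{st}\vy_{st})^*(B_\I \vx_{st}) + (B_\I \vy_{st})^*(B_{st}\vx_{st}) = 0.
\end{equation*}
Since this bilinear form vanishes identically on the $0$-eigenspace, the restricted matrix $C_{rr} = W_r A_\I W_r^* = O$, so by (\ref{blocks}) all of its eigenvalues $\lambda_{r,1}, \ldots, \lambda_{r,k_r}$ are $0$. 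Setting $s = r - 1$ if $\lambda_r = 0$ (and $s = r$ otherwise), the trailing block contributes $k_r$ zeros to $\Sigma$, yielding the exact form (\ref{block-diag.1}).

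Finally, uniqueness of the real numbers $\lambda_i$ and $\lambda_{i,j}$ is inherited from Theorem \ref{t4.1}: the $\lambda_i$ are the distinct nonzero eigenvalues of the complex Hermitian matrix $A_{st}$, hence unique; and the uniqueness of the $\lambda_{i,j}$ for $i \leq s$ is precisely the uniqueness argument at the end of the proof of Theorem \ref{t4.1}, which shows that the spectra of $C_{ii}$ do not depend on the choice of orthonormal basis of the corresponding eigenspace. The trailing zeros are fixed in number by $n - \sum_{i=1}^s k_i$, completing the statement. The only genuinely new work beyond Theorem \ref{t4.1} is the vanishing of $C_{rr}$ on $\ker(B_{st})$ established above, which is where the special structure $A = B^*B$ is used.
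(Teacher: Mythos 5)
Your proposal is correct and follows essentially the same route as the paper: apply Theorems \ref{t4.1} and \ref{t4.4.1} to the positive semidefinite Hermitian matrix $A = B^*B$, then use the special structure $A_{st} = B_{st}^*B_{st}$, $A_\I = B_{st}^*B_\I + B_\I^*B_{st}$ to show the infinitesimal parts associated with the zero standard eigenvalue vanish, with uniqueness inherited from Theorem \ref{t4.1}. The only cosmetic difference is that you show the entire block $C_{rr} = W_r A_\I W_r^*$ is zero (since the $0$-eigenspace of $A_{st}$ lies in $\ker(B_{st})$), while the paper argues diagonal-entry by diagonal-entry via the quadratic form $\vu_{r,j}^* A \vu_{r,j} = \lambda_{r,j}\epsilon$; both rest on the same observation that $B_{st}$ annihilates that eigenspace.
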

\begin{proof} By Theorems \ref{t4.1} and \ref{t4.4.1}, since $A$ is a positive semidefinite dual complex Hermitian matrix, $A$ can be diagonalized by $U$ as defined in Theorem \ref{t4.1}, and has exactly $n$ eigenvalues which are all nonnegative dual numbers, and may be denoted as $\lambda_{i}+\lambda_{i,j}\epsilon$, $i=1,\cdots, r$, $j=1,\cdots, k_i$, and $\lambda_1>\cdots >\lambda_r\geq 0$, $\lambda_{1,1} \ge \cdots \ge \lambda_{1,k_1}$, $\cdots$, $\lambda_{r,1}\ge \cdots \ge \lambda_{r,k_r}$. We now need to show that if $\lambda_r =0$, then $\lambda_{r,j}=0$ for every $j=1,\cdots, k_r$.   Note that
\begin{equation}\label{5-0} \vu_{r, j}^* A \vu_{r, j} = \vu_{r, j}^* \lambda_{r,j} \vu_{r, j} \epsilon = \lambda_{r,j} \epsilon, ~~j=1,\cdots, k_r.
\end{equation}
Since
\begin{equation}\label{5-1}
\vu_{r, j}^* A \vu_{r, j}
=  \vu_{r, j}^* B_{st}^*B_{st} \vu_{r, j} + \vu_{r, j}^* \left(B_{st}^*B_{\I}+B_{\I}^*B_{st}\right) \vu_{r, j}\epsilon,
\end{equation}
we have
\begin{equation}\label{5-2}
\vu_{r, j}^* B_{st}^*B_{st} \vu_{r, j} = 0,
\end{equation}
and hence $B_{st} \vu_{r, j} = {\bf 0}$. Therefore, we have
\begin{eqnarray}
&   & \vu_{r, j}^* \left(B_{st}^*B_{\I}+B_{\I}^*B_{st}\right) \vu_{r, j} \nonumber\\
& = &\left(B_{st} \vu_{r, j}\right)^* B_\I \vu_{r, j}+ \vu_{r, j}^*B_{\I}^*\left(B_{st}\vu_{r, j}\right)\nonumber\\
& = & 0. \nonumber
\end{eqnarray}
By  equations \eqref{5-0} and \eqref{5-1}, we have  $\lambda_{r,j}=0$ for every $j=1,\cdots, k_r$. This completes the proof.

\end{proof}

We now have the SVD theorem for dual complex matrices.

\begin{Thm}\label{SVD.1}
Suppose that $B \in {\mathbb {DC}}^{m \times n}$. Then there exists a unitary matrix $\hat{V} \in {\mathbb {DC}}^{m \times m}$ and a unitary matrix $\hat{U} \in {\mathbb {DC}}^{n \times n}$ such that
\begin{equation} \label{e20.1}
\hat{V}^*B\hat{U} = \begin{bmatrix} \Sigma_t & O  \\ O & O \end{bmatrix},
\end{equation}
where $\Sigma_t\in {\mathbb{D}}^{t\times t}$ is a diagonal matrix, taking the form $$\Sigma_t ={\rm diag}\left(\mu_1, \cdots, \mu_r, \cdots, \mu_t \right),$$
$r \le t \le \min \{ m , n \}$, $\mu_1 \ge \mu_2 \ge \cdots \ge \mu_r$ are positive appreciable dual numbers, and $\mu_{r+1} \ge \cdots \ge \mu_t$ are positive infinitesimal dual numbers.    Counting possible multiplicities of the diagonal entries, the form $\Sigma_t$ is unique.
\end{Thm}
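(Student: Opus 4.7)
The plan is to reduce to Theorem \ref{special-Herm.1} applied to $A = B^*B$ to extract the appreciable singular values, then handle the residual infinitesimal block by an ordinary complex SVD on a suitable auxiliary matrix, gluing everything together with one unitary ``dual rotation''. First I would apply Theorem \ref{special-Herm.1} to $A = B^*B$ to obtain a unitary $U \in {\mathbb{DC}}^{n\times n}$ so that $U^*B^*BU$ is diagonal with $r := \sum_i k_i$ leading appreciable positive entries $\lambda_i+\lambda_{i,j}\epsilon$ and $n-r$ trailing zeros. Partitioning $U = [U_1 \mid U_2]$ accordingly, I let $\Sigma_1\in {\mathbb{D}}^{r\times r}$ be the diagonal matrix whose entries are the dual-number square roots of the appreciable eigenvalues (well defined by the formula in Section 2 since every $\lambda_i>0$), and set $V_1 := B U_1 \Sigma_1^{-1}\in {\mathbb{DC}}^{m\times r}$. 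A direct computation gives $V_1^*V_1 = \Sigma_1^{-1}U_1^*B^*BU_1\Sigma_1^{-1} = I_r$, so $V_1$ is partially unitary and Corollary \ref{c3.4} lets me extend it to a unitary $\hat V_0 = [V_1\mid \tilde V]\in {\mathbb{DC}}^{m\times m}$.

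Next I observe that each column $\vu$ of $U_2$ satisfies $\vu^*B^*B\vu = 0$, whose standard part $\|B_{st}\vu_{st}\|^2$ must vanish, forcing $B_{st}\vu_{st} = 0$; hence $BU_2 = W\epsilon$ with $W = B_{st}U_{2,\I} + B_\I U_{2,st}\in {\mathbb{C}}^{m\times (n-r)}$. In block form,
\[\hat V_0^* B U = \begin{pmatrix} \Sigma_1 & V_{1,st}^* W\epsilon \\ 0 & \tilde V_{st}^* W\epsilon \end{pmatrix},\]
with the bottom-left block zero because $\tilde V^*V_1 = 0$. I then eliminate the top-right infinitesimal block by right-multiplying with a unitary $S = I_n + S_\I\epsilon$ whose anti-Hermitian infinitesimal part has top-right block $-\Sigma_{1,st}^{-1}V_{1,st}^*W$; unitarity follows from Proposition \ref{pp3.1}, and a direct calculation yields $\hat V_0^* B U S = {\rm diag}(\Sigma_1, W'\epsilon)$ with $W' = \tilde V_{st}^* W = \tilde V_{st}^* B_\I U_{2,st}\in {\mathbb{C}}^{(m-r)\times (n-r)}$, using $\tilde V_{st}^*B_{st} = 0$ (again from Proposition \ref{pp3.1} applied to $\hat V_0$).

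Since $W'$ is a plain complex matrix, its ordinary SVD $W' = V'\Sigma' U'^*$ gives $\Sigma' = {\rm diag}(\sigma_{r+1},\ldots,\sigma_t,0,\ldots,0)$ with $\sigma_{r+1}\ge\cdots\ge\sigma_t>0$. Setting $R := {\rm diag}(I_r,V')$, $Q := {\rm diag}(I_r,U')$, $\hat V := \hat V_0 R$, and $\hat U := U S Q$ (all products of unitary matrices, so unitary), one obtains $\hat V^* B \hat U = {\rm diag}(\Sigma_1,\Sigma'\epsilon)$, whose top-left $t\times t$ submatrix is the required $\Sigma_t$; the bound $t \le \min\{m,n\}$ follows from $t-r\le \min(m-r,n-r)$. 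For uniqueness, the standard parts of the $\mu_i$ are the singular values of $B_{st}$, the appreciable dual parts inherit their uniqueness from Theorem \ref{special-Herm.1}, and the infinitesimal entries $\sigma_k\epsilon$ are the singular values of $\tilde V_{st}^*B_\I U_{2,st}$, which are intrinsic because $\tilde V_{st}$ and $U_{2,st}$ are orthonormal bases of $\ker(B_{st}^*)$ and $\ker(B_{st})$, determined up to unitary change of basis. The main obstacle I foresee is the middle step---crafting $S$ so that its dual rotation simultaneously preserves the appreciable block $\Sigma_1$ and annihilates the leaked infinitesimal off-diagonal---since this is precisely what bridges the gap between the spectrum of $B^*B$, which is blind to the purely infinitesimal part of $B$, and a genuine SVD of $B$ that captures those infinitesimal singular values.
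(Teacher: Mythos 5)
Your proof follows essentially the same route as the paper: apply Theorem \ref{special-Herm.1} to $B^*B$, form $\Sigma_1$ and $V_1 = BU_1\Sigma_1^{-1}$, extend by Corollary \ref{c3.4}, reduce the residual block to a complex matrix times $\epsilon$, and finish with a classical complex SVD, with a uniqueness argument via Corollary \ref{cor3.5}. The only genuine deviation is the ``dual rotation'' $S$ you introduce to kill the block $V_{1,st}^*W\epsilon$; that step is superfluous, because $V_1^*BU_2 = \Sigma_1^{-*}U_1^*(B^*BU_2) = O$ (the trailing eigenvalues of $B^*B$ are exactly zero by Theorem \ref{special-Herm.1}), so $V_{1,st}^*W = O$ already and the paper goes directly to the block-diagonal form without any $S$. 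Your identification of the infinitesimal block as $\tilde V_{st}^*B_\I U_{2,st}$, with $\tilde V_{st}$ and $U_{2,st}$ orthonormal bases of $\ker(B_{st}^*)$ and $\ker(B_{st})$, is a slightly more conceptual way of stating the same uniqueness argument the paper gives via $G^{(2)} = H_{2,st}^*G^{(1)}H_{1,st}$.
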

\begin{proof}  Let $A = B^* B$. By Theorem \ref{special-Herm.1}, 
there exists a complex unitary matrix $U \in {\mathbb {DC}}^{n \times n}$ as defined in Theorem \ref{t4.1} such that $A$ can be diagonalized as in \eqref{block-diag.1}. Let $r = \sum\limits_{j=1}^s k_j$, and
\begin{equation}\label{sigma-cons}
\Sigma_r \equiv \mathrm{diag}\left( \sigma_1+\sigma_{1,1}\epsilon, \cdots, \sigma_1+\sigma_{1,k_1}\epsilon, \sigma_2+\sigma_{2,1}\epsilon,\cdots, \sigma_s+\sigma_{s,k_s}\epsilon
          \right)
\end{equation}
with $\sigma_i = \sqrt{\lambda_{i}}$, $\sigma_{i,j} = { \lambda_{i,j} \over 2\sqrt{\lambda_{i}}}$, $j=1,\cdots, k_i$, $i=1,\cdots, s$. Denote
$U_1 = U_{:,1:r}$ and $U_2 = U_{:, r+1:n}$. By direct calculation, we have
$$AU = [B^* B U_1~~B^*BU_2 ]= [U_1\Sigma_r^2~~O],$$
which leads to
$$ U_1^*B^* B U_1 = \Sigma_r^2, ~~U_2^*B^* B U_2 = O.$$
Therefore, $B U_2 = M \epsilon$ with a complex matrix $M$.

Let $V_1 = BU_1\Sigma_r^{-1}\in {\mathbb{DC}}^{m\times r}$.
Then
$$V_1^*V_1 = I_s,~~V_1^*BU_1 = V_1^*V_1\Sigma_r = \Sigma_r,~~ V_1^*BU_2 = \left(\Sigma_r^{-1}\right)^*U_1^*(B^*BU_2) =  O.$$
By Corollary \ref{c3.4}, we may take $V_2 \in {\mathbb {DC}}^{m \times (m-r)}$ such that $V = (V_1, V_2)$ is a unitary matrix.  Then
$$V_2^*BU_1= V_2^*V_1\Sigma_r = O,~~V_2^*BU_2 = V_2^*M\epsilon = G\epsilon,$$
where $G$ is an $(m-r) \times (n-r)$ complex matrix.   Thus,
\begin{eqnarray*}
V^*BU & = & \begin{bmatrix} V_1^*BU_1 & V_1^*BU_2 \\ V_2^*BU_1 & V_2^*BU_2 \end{bmatrix}\\
& = & \begin{bmatrix} \Sigma_r & O \\ O & G\epsilon \end{bmatrix}.
\end{eqnarray*}
By matrix theory, there exist complex unitary matrices $W_1\in {\mathbb{C}}^{(m-r)\times (m-r)}$ and $W_2\in{\mathbb{C}}^{(n-r)\times (n-r)}$ such that
$W_1^* G W_2 = D$, where $D\in {\mathbb{R}}^{(m-r)\times (n-r)}$ with $D_{ij}=0$ for any $i\neq j$ and
$$D_{11}\geq \cdots \geq D_{ll}\geq 0$$ with $l= \min\{m-r,n-r\}$. Let $r'$ be the number of positive $D_{ii}$'s (counting multiplicity). Apparently, $D_{11},\cdots, D_{r'r'}$ are the square roots of real positive eigenvalues of the Hermitian matrix $G^*G\in {\mathbb{C}}^{(n-r)\times (n-r)}$.
Denote
\begin{equation}\label{UV_bar}
\hat{V} \equiv V\left(
                  \begin{array}{cc}
                    I_r &   \\
                      & W_1 \\
                  \end{array}
                \right), ~~\mathrm{and}~~\hat{U} \equiv U\left(
                  \begin{array}{cc}
                    I_r &   \\
                      & W_2 \\
                  \end{array}
                \right).
\end{equation}
It is obvious that both $\hat{V}$ and $\hat{U}$ are unitary and $\hat{V}^* B \hat{U} = \begin{bmatrix} \Sigma_r & O  \\ O & D\epsilon \end{bmatrix}$.  Set $t = r+r'$, and
\begin{equation}\label{sigmat}\Sigma_t = {\rm diag}(\Sigma_r, D_{11}\epsilon, \cdots, D_{r'r'}\epsilon).\end{equation} Then we have (\ref{e20.1}).  The uniqueness of $\Sigma_r$ follows from Theorems \ref{t4.1} and \ref{special-Herm.1}.

Now we claim that the real positive numbers $D_{11}, \cdots, D_{r'r'}$ are also unique. Note that $U_1$ and $V_1$ are uniquely determined from Theorems \ref{t4.1} and \ref{special-Herm.1}, while $U_2$ and $V_2$ may have different choices, provided that $U_2$, $V_2$ expand $U_1$ and $V_1$ to get unitary matrices $U$ and $V$, respectively. Among all these choices, pick any two distinct pairs $\{U^{(1)}_2, V^{(1)}_2\}$ and $\{U^{(2)}_2, V^{(2)}_2\}$ (i.e., $U^{(1)}_2\neq  U^{(2)}_2$ or $V^{(1)}_2\neq V^{(2)}_2$, or both).
Applying Corollary \ref{cor3.5}, we can find unitary matrices $H_1\in {\mathbb{DC}}^{(n-r)\times (n-r)}$ and
$H_2\in {\mathbb{DC}}^{(m-r)\times (m-r)}$ such that
\begin{equation}\label{trans}
U^{(2)}_2 = U^{(1)}_2 H_1,~~V^{(2)}_2 = V^{(1)}_2 H_2.
\end{equation}
It is known from Proposition \ref{pp3.1} that
$\left[U_{1,st}, U^{(1)}_{2,st}\right]$ and $\left[U_{1,st}, U^{(2)}_{2,st}\right]$ are complex unitary matrices in ${\mathbb{C}}^{n\times n}$. Similarly,
$\left[V_{1,st}, V^{(1)}_{2,st}\right]$ and $\left[V_{1,st}, V^{(2)}_{2,st}\right]$ are complex unitary matrices in ${\mathbb{C}}^{m\times m}$. Thus, the unitary matrices $H_{1,st}\in {\mathbb{C}}^{(n-r)\times (n-r)}$ and $H_{2,st}\in {\mathbb{C}}^{(m-r)\times (m-r)}$ will satisfy
\begin{equation}
U^{(2)}_{2,st} = U^{(1)}_{2,st}H_{1,st}  ~{\rm ~and~}~V^{(2)}_{2,st} = V^{(1)}_{2,st}H_{2,st}.
\end{equation}
Denote \begin{equation}\label{e31} B U_2^{(1)} = M^{(1)} \epsilon,~~B U_2^{(2)} = M^{(2)} \epsilon,\end{equation}  and
\begin{equation}\label{e32} \left(V_2^{(1)}\right)^*BU_2^{(1)} = \left(V_2^{(1)}\right)^*M^{(1)}\epsilon = G^{(1)}\epsilon,~~\left(V_2^{(2)}\right)^*BU_2^{(2)} = \left(V_2^{(2)}\right)^*M^{(2)}\epsilon = G^{(2)}\epsilon,\end{equation}
where $M^{(i)}$, $G^{(i)}$, $i=1,2$, are complex matrices.
Direct calculations lead to
$$M^{(2)}\epsilon = BU_2^{(2)} = BU_2^{(1)} H_1 = M^{(1)}\epsilon H_1 = M^{(1)}H_{1,st}\epsilon,$$
which implies that $M^{(2)} = M^{(1)}H_{1,st}$. Similarly, we have
\begin{eqnarray}\label{similar}
 G^{(2)}\epsilon &=&\left(V_2^{(2)}\right)^*M^{(2)}\epsilon \nonumber\\
 &= &\left(V_{2,st}^{(2)}\right)^*M^{(2)}\epsilon \nonumber\\
 &= & H_{2,st}^*\left(V_{2,st}^{(1)}\right)^* M^{(1)}H_{1,st} \epsilon \nonumber\\
 & = & H_{2,st}^*G^{(1)}H_{1,st} \epsilon,
\end{eqnarray}
where the last equality follows from the fact that
$$G^{(1)}\epsilon =\left(V_2^{(1)}\right)^*M^{(1)}\epsilon = \left(V_{2,st}^{(1)}\right)^* M^{(1)}\epsilon.$$
Thus, $G^{(2)} = H_{2,st}^*G^{(1)}H_{1,st}$. Note that $H_{2,st}$ and $H_{1,st}$ are complex unitary matrices. It follows from complex matrix theory that the Hermitian matrices
$\left(G^{(1)}\right)^* G^{(1)}$ and $\left(G^{(2)}\right)^* G^{(2)}$ will have the same eigenvalues, whose square roots are exactly $D_{11}, \cdots, D_{r'r'}$ as displayed in \eqref{sigmat}. The desired uniqueness of the claim is obtained. This completes the proof. \end{proof}

\section{Ranks of Dual Complex Matrices}
In Theorem \ref{SVD.1}, the dual numbers $\mu_1, \cdots, \mu_t$ and possibly $\mu_{t+1} = \cdots = \mu_{\min \{m, n \} }=0$, if $t < \min \{m, n\}$, are called the singular values of $A$, the integer $t$ is called the rank of $A$, and the integer $r$ is called the appreciable rank of $A$.   We denote the rank of $A$ by Rank$(A)$, and the appreciable rank of $A$ by ARank$(A)$.

\begin{Prop} \label{pp4.2}
Suppose that $U \in {\mathbb{DC}}^{m \times m}$ and $V \in {\mathbb{DC}}^{n \times n}$ are unitary, and $A \in {\mathbb {DC}}^{m \times n}$.   Then
\begin{equation} \label{eq10.2}
{\rm Rank}(UAV) = {\rm Rank}(A),
\end{equation}
and
\begin{equation} \label{eq11.2}
{\rm ARank}(UAV) = {\rm ARank}(A).
\end{equation}
\end{Prop}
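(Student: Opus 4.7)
The plan is to reduce the statement directly to the uniqueness clause in the SVD theorem (Theorem \ref{SVD.1}). Since Rank and ARank are defined via the diagonal block $\Sigma_t$ obtained from the SVD, it suffices to show that $UAV$ admits an SVD decomposition with exactly the same $\Sigma_t$ as $A$.

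First I would invoke Theorem \ref{SVD.1} on $A$ to write
$$A = \hat{V}\begin{bmatrix} \Sigma_t & O \\ O & O \end{bmatrix}\hat{U}^*,$$
where $\hat{V} \in \mathbb{DC}^{m \times m}$ and $\hat{U} \in \mathbb{DC}^{n \times n}$ are unitary, $t = {\rm Rank}(A)$, and the number of appreciable diagonal entries of $\Sigma_t$ equals ${\rm ARank}(A)$. Multiplying on the left by $U$ and on the right by $V$ yields
$$UAV = (U\hat{V})\begin{bmatrix} \Sigma_t & O \\ O & O \end{bmatrix}(\hat{U}^*V).$$

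The key substep is to verify that $\tilde V := U\hat{V}$ and $\tilde U := V^*\hat{U}$ are unitary (so that $\tilde U^* = \hat{U}^*V$ sits on the right in the required form). Using the identity $(XY)^* = Y^*X^*$ from \eqref{commu}, we get $\tilde V^*\tilde V = \hat{V}^*(U^*U)\hat{V} = \hat{V}^*\hat{V} = I_m$. For $\tilde U$, because $\hat{U}$ is unitary, Proposition \ref{p2.0} gives $\hat{U}\hat{U}^* = I_n$, hence $\tilde U^*\tilde U = \hat{U}^*(VV^*)\hat{U}$; applying Proposition \ref{p2.0} again to $V$ yields $VV^* = I_n$, so $\tilde U^*\tilde U = \hat{U}^*\hat{U} = I_n$. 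Therefore
$$UAV = \tilde V \begin{bmatrix} \Sigma_t & O \\ O & O \end{bmatrix}\tilde U^*$$
is an SVD of $UAV$ with the very same $\Sigma_t$.

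By the uniqueness part of Theorem \ref{SVD.1} (counting multiplicities), the diagonal block in any SVD of $UAV$ must coincide with $\Sigma_t$. Consequently, $UAV$ has rank $t = {\rm Rank}(A)$ and its number of appreciable singular values equals that of $A$, giving \eqref{eq10.2} and \eqref{eq11.2}. I do not anticipate any real obstacle here; the only point that requires a little care is the verification that $\hat{U}^*V$ is unitary, which needs both Proposition \ref{p2.0} (to flip $\hat{U}^*\hat{U} = I$ into $\hat{U}\hat{U}^* = I$) and the assumed unitarity of $V$.
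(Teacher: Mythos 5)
Your proof is correct and takes essentially the same route as the paper: write the SVD of $A$, conjugate $UAV$ by the obvious composite unitary matrices to recover the same diagonal form, and invoke the uniqueness clause of Theorem \ref{SVD.1}. The one minor slip is that the line ``Proposition \ref{p2.0} gives $\hat{U}\hat{U}^*=I_n$, hence $\tilde U^*\tilde U = \hat U^*(VV^*)\hat U$'' conflates two independent facts — the identity $\tilde U^*\tilde U = \hat U^*(VV^*)\hat U$ is just expansion, and what you actually need is $VV^*=I_n$ (Proposition \ref{p2.0} applied to $V$) together with $\hat U^*\hat U = I_n$ — but the computation and conclusion are right.
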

\begin{proof}
By Theorem \ref{SVD.1}, there are  a dual complex unitary matrix $V_A \in {\mathbb {DC}}^{m \times m}$ and a dual complex unitary matrix $U_A \in {\mathbb {DC}}^{n \times n}$ such that (\ref{e20.1}) holds.  Let $W = UAV$.  Then
$$(V_AU^*)W(V^*U_A) = D,$$
where $D$ is the diagonal matrix in (\ref{e20.1}).  Since $V_AU^*$ and $V^*U_A$ are unitary and the form of $\Sigma$ is unique, we have (\ref{eq10.2}) and (\ref{eq11.2}).
\end{proof}

\begin{Prop} Suppose that $A \in {\mathbb{DC}}^{n \times m}$. Then ${\rm Rank}(A)={\rm Rank}(A^*)={\rm Rank}(A^\top)= {\rm Rank}(\bar{A}) $, and ${\rm ARank}(A)={\rm ARank}(A^*)={\rm ARank}(A^\top)= {\rm ARank}(\bar{A})$.
\end{Prop}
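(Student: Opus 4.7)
The plan is to deduce everything from the SVD theorem (Theorem \ref{SVD.1}), using the fact that the diagonal matrix $\Sigma_t$ produced by the SVD has entries that are \emph{real} dual numbers (both standard part and infinitesimal part are real). First I would write the SVD $\hat V^* A \hat U = \begin{bmatrix} \Sigma_t & O \\ O & O \end{bmatrix} =: \Sigma$, so that $A = \hat V \Sigma \hat U^*$, with $\hat V \in \mathbb{DC}^{n \times n}$ and $\hat U \in \mathbb{DC}^{m \times m}$ unitary. Since the diagonal entries of $\Sigma_t$ have real standard and real infinitesimal parts, $\bar\Sigma = \Sigma$, and since $\Sigma$ is (rectangular) diagonal, $\Sigma^\top = \Sigma^*$ differs from $\Sigma$ only by transposing the shape from $n \times m$ to $m \times n$ but keeps the same positive appreciable and positive infinitesimal entries, in the same order, on the diagonal.

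Next, I would handle the three operations one by one. For $A^*$: from $A = \hat V \Sigma \hat U^*$ take conjugate transpose to obtain $A^* = \hat U \Sigma^* \hat V^*$, which is a valid SVD of $A^*$ with unitary factors $\hat U$ and $\hat V$ and with diagonal matrix $\Sigma^*$ having exactly the same nonzero entries as $\Sigma$. By the uniqueness clause of Theorem \ref{SVD.1}, the number of positive appreciable singular values and the total number of nonzero singular values agree, giving $\mathrm{ARank}(A^*) = \mathrm{ARank}(A)$ and $\mathrm{Rank}(A^*) = \mathrm{Rank}(A)$. For $\bar A$: I would verify that $\bar{\hat V}$ is unitary (this follows by conjugating the identity $\hat V^*\hat V = I_n$, which gives $\hat V^\top \bar{\hat V} = I_n$, hence $(\bar{\hat V})^* \bar{\hat V} = I_n$), and similarly for $\bar{\hat U}$. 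Conjugating $\hat V^* A \hat U = \Sigma$ entry-wise yields $(\bar{\hat V})^* \bar A\, \bar{\hat U} = \bar\Sigma = \Sigma$, which is again an SVD with the same diagonal, so Rank and ARank are preserved. For $A^\top$: simply combine the two previous cases via $A^\top = \overline{A^*}$.

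The potential subtle point, which I expect to be the only thing needing genuine care, is making sure that after applying $*$, $\top$, or conjugation one actually obtains an SVD in the sense of Theorem \ref{SVD.1} (with the correct ordering of diagonal entries and with unitary outer factors), so that the uniqueness of $\Sigma_t$ can be invoked to read off Rank and ARank. The ordering is automatic because the entries of $\Sigma$ are real dual numbers left unchanged by $*, \top$, and conjugation; the unitarity of the outer factors reduces to the observation that $\bar W$ is unitary whenever $W$ is, which is immediate. With those two checks in place, the four quantities ${\rm Rank}(A), {\rm Rank}(A^*), {\rm Rank}(A^\top), {\rm Rank}(\bar A)$ are all read off from the same diagonal $\Sigma_t$, and similarly for ARank, completing the proof.
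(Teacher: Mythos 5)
Your proof is correct and follows essentially the same route as the paper: apply $*$, $\top$, and conjugation to an SVD $A=\hat V\Sigma\hat U^*$, observe that because the diagonal of $\Sigma$ consists of real dual numbers each transformed expression is again an SVD with the identical diagonal, and conclude via the uniqueness clause of Theorem \ref{SVD.1}. The only cosmetic difference is that the paper treats $A^*$ and $A^\top$ directly and recovers $\bar A$ from $\bar A=(A^*)^\top$, whereas you treat $A^*$ and $\bar A$ directly and recover $A^\top$ from $A^\top=\overline{A^*}$.
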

\begin{proof} Assume that the SVD of $A$ is $A = U\Sigma V^*$ with unitary dual complex matrices $U$ and $V$. By virtue of \eqref{commu}, we have $A^\top = (V^*)^\top\Sigma^\top U^\top$ and $A^* = V \Sigma^* U^*$. Note that $U^\top$ and $(V^*)^\top$ are unitary dual complex matrices, and all the diagonal entries of $\Sigma$ are nonnegative dual numbers. Combining with the fact $\bar{A} = (A^*)^\top$, we can easily obtain all the desired assertions.
\end{proof}

The proof of the following proposition is direct.   We omit its proof here.

\begin{Prop} \label{pp4.3}
Suppose that $A \in {\mathbb {DC}}^{m \times n}$ can be written as
$$A = \begin{bmatrix} A_r \\ A_{s-r}\epsilon \\ O \end{bmatrix},$$
where $A_r \in {\mathbb {DC}}^{r \times n}$ and $A_{s-r} \in {\mathbb {C}}^{s-r \times n}$, for $0 \le r \le s \le m$.   Then
$${\rm Rank}(A) \le s, \ \ {\rm ARank}(A) \le r.$$
\end{Prop}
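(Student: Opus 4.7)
The plan is to treat the two inequalities separately, each time reducing the question to the SVD of a smaller matrix and invoking the unitary invariance of rank established in Proposition~\ref{pp4.2}.

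For the bound ${\rm Rank}(A) \le s$, the idea is that the zero bottom block contributes nothing. Write $A = \begin{bmatrix} A' \\ O \end{bmatrix}$ with $A' \in \mathbb{DC}^{s \times n}$ collecting the top $s$ rows. Apply Theorem~\ref{SVD.1} to $A'$ to obtain $A' = V'\Sigma' U'^{*}$ with $V' \in \mathbb{DC}^{s \times s}$ and $U' \in \mathbb{DC}^{n \times n}$ unitary. Then
\[
A \;=\; \begin{bmatrix} V' & O \\ O & I_{m-s} \end{bmatrix} \begin{bmatrix} \Sigma' \\ O \end{bmatrix} U'^{*},
\]
and, since the outer factors are unitary, Proposition~\ref{pp4.2} will give ${\rm Rank}(A) = {\rm Rank}\!\begin{bmatrix} \Sigma' \\ O \end{bmatrix}$. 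The latter matrix is already essentially in SVD form, with at most $\min(s,n) \le s$ nonzero diagonal entries inherited from $\Sigma'$, so its rank is at most $s$.

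For the bound ${\rm ARank}(A) \le r$, the key is to exploit $\epsilon^2 = 0$ to kill the infinitesimal middle block in the Gram matrix:
\[
A^{*}A \;=\; A_r^{*}A_r + A_{s-r}^{*}A_{s-r}\,\epsilon^2 \;=\; A_r^{*}A_r.
\]
Next I would establish a short auxiliary identity: for every $B \in \mathbb{DC}^{m \times n}$, ${\rm Rank}(B^{*}B) = {\rm ARank}(B)$. Indeed, from the SVD $B = \hat V \Sigma \hat U^{*}$ of Theorem~\ref{SVD.1}, the eigenvalues of $B^{*}B$ are the $|\mu_i|^2$; an appreciable $\mu_i$ squares to a positive appreciable dual number, while an infinitesimal $\mu_i = c_i\epsilon$ squares to $0$, so the nonzero eigenvalues of $B^{*}B$ count precisely ${\rm ARank}(B)$. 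Applying this identity to $A$ and to $A_r$ yields
\[
{\rm ARank}(A) \;=\; {\rm Rank}(A^{*}A) \;=\; {\rm Rank}(A_r^{*}A_r) \;=\; {\rm ARank}(A_r) \;\le\; {\rm Rank}(A_r) \;\le\; r,
\]
the last inequality being the size bound $t \le \min(m,n)$ of Theorem~\ref{SVD.1} applied to $A_r \in \mathbb{DC}^{r \times n}$.

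The only mildly non-obvious ingredient is the identity ${\rm Rank}(B^{*}B) = {\rm ARank}(B)$, and even this is really just the algebraic observation that $\epsilon^2 = 0$ annihilates infinitesimal singular values under squaring. Everything else is routine bookkeeping with the size bound from Theorem~\ref{SVD.1} and the unitary invariance of Proposition~\ref{pp4.2}.
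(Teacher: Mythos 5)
The paper labels this proposition ``direct'' and omits its proof, so there is no reference argument to compare with; I will simply assess yours on its merits, which are sound. The first bound, $\mathrm{Rank}(A)\le s$, via
$A=\begin{bmatrix} V' & O\\ O & I_{m-s}\end{bmatrix}\begin{bmatrix}\Sigma'\\ O\end{bmatrix}U'^{*}$
followed by Proposition~\ref{pp4.2}, is correct: $\begin{bmatrix}\Sigma'\\ O\end{bmatrix}$ is already in the canonical rectangular-diagonal form of Theorem~\ref{SVD.1}, so its rank is its number of nonzero diagonal entries, which is at most $\min(s,n)\le s$. Your second bound hinges on the auxiliary identity $\mathrm{Rank}(B^{*}B)=\mathrm{ARank}(B)$. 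That identity is correct: if $B=\hat V\Sigma\hat U^{*}$ with $\Sigma$ having appreciable entries $\mu_{1}\ge\cdots\ge\mu_{r}$ and infinitesimal entries $\mu_{r+1}\ge\cdots\ge\mu_{t}$, then $\mu_{i}^{2}$ stays positive appreciable for $i\le r$ while $\mu_{i}^{2}=c_{i}^{2}\epsilon^{2}=0$ for $r<i\le t$, squaring preserves the order among nonnegative dual numbers, and so $B^{*}B=\hat U\,\mathrm{diag}(\mu_{1}^{2},\ldots,\mu_{r}^{2},0,\ldots,0)\,\hat U^{*}$ is an SVD of $B^{*}B$ with exactly $\mathrm{ARank}(B)$ nonzero singular values. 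Combined with $A^{*}A=A_{r}^{*}A_{r}$ (from $\epsilon^{2}=0$) and $\mathrm{ARank}(A_{r})\le\mathrm{Rank}(A_{r})\le\min(r,n)\le r$, the chain is airtight. One remark: you could avoid introducing the Gram-matrix lemma entirely by passing to standard parts. From the SVD $A=\hat V\Sigma\hat U^{*}$ and Proposition~\ref{pp3.1} one gets $A_{st}=\hat V_{st}\Sigma_{st}\hat U_{st}^{*}$ with $\hat V_{st},\hat U_{st}$ complex unitary, so $\mathrm{ARank}(A)$ (the number of appreciable entries of $\Sigma$, i.e.\ the number of nonzero entries of $\Sigma_{st}$) equals $\mathrm{Rank}(A_{st})$; since $A_{st}$ has only its top $r$ rows nonzero, $\mathrm{Rank}(A_{st})\le r$. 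This is essentially the unnumbered theorem at the end of Section~6 proved early; either route is legitimate, and yours additionally produces a reusable identity for the Gram matrix.
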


We now prove the following theorem.

\begin{Thm} \label{Rank}
Suppose that $A \in {\mathbb {DC}}^{m \times n}$ and $B \in {\mathbb {DC}}^{n \times p}$.   Then,
\begin{equation} \label{eq12}
{\rm Rank}(AB) \le \min \{{\rm Rank}(A), {\rm Rank}(B)\},
\end{equation}
and
\begin{equation} \label{eq13}
{\rm ARank}(AB) \le \min \{{\rm ARank}(A), {\rm ARank}(B)\}.
\end{equation}
\end{Thm}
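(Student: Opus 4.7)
The plan is to use the SVD of Theorem~\ref{SVD.1} to reduce $AB$ to the row-structured form of Proposition~\ref{pp4.3}, then apply the unitary invariance of Proposition~\ref{pp4.2}; the bounds in terms of $B$ then follow by conjugate transposition.

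First I would write the SVD $A = \hat V_A \begin{bmatrix} \Sigma_{t_A} & O \\ O & O \end{bmatrix} \hat U_A^*$, where $t_A = \mathrm{Rank}(A)$, the first $r_A = \mathrm{ARank}(A)$ diagonal entries of $\Sigma_{t_A}$ are positive appreciable dual numbers and the remaining $t_A - r_A$ entries are positive infinitesimal. Setting $C := \hat U_A^* B$ and partitioning $C = \begin{bmatrix} C_1 \\ C_2 \end{bmatrix}$ with $C_1 \in {\mathbb{DC}}^{t_A \times p}$, one obtains
$$AB \ = \ \hat V_A \begin{bmatrix} \Sigma_{t_A} C_1 \\ O \end{bmatrix}.$$
By Proposition~\ref{pp4.2}, it suffices to bound $\mathrm{Rank}$ and $\mathrm{ARank}$ of $\begin{bmatrix} \Sigma_{t_A} C_1 \\ O \end{bmatrix}$.

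The key observation is the block structure of $\Sigma_{t_A} C_1$: each of its top $r_A$ rows is an appreciable dual scalar times a dual complex row, hence a general dual complex row; each of the next $t_A - r_A$ rows has the form $(\mu_{i,\I}\epsilon)\vc_i = \mu_{i,\I}\vc_{i,st}\epsilon$ because $\epsilon^2 = 0$, and is therefore a purely complex row multiplied by $\epsilon$. Appending the $m - t_A$ zero rows below, the matrix fits the template of Proposition~\ref{pp4.3} with $r = r_A$ and $s = t_A$, yielding $\mathrm{Rank}(AB) \le t_A = \mathrm{Rank}(A)$ and $\mathrm{ARank}(AB) \le r_A = \mathrm{ARank}(A)$.

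For the bounds in terms of $B$, I would invoke the preceding proposition, which states that rank and appreciable rank are preserved under conjugate transposition, applied to the identity $(AB)^* = B^*A^*$. Combining this with what has just been proved for the product $B^*A^*$ gives $\mathrm{Rank}(AB) = \mathrm{Rank}(B^*A^*) \le \mathrm{Rank}(B^*) = \mathrm{Rank}(B)$, and analogously for $\mathrm{ARank}$. I do not anticipate any serious obstacle; the one subtle point is verifying that the middle block of rows truly has vanishing standard part so as to match the $A_{s-r}\epsilon$ block of Proposition~\ref{pp4.3}, and this is immediate from $\epsilon^2 = 0$.
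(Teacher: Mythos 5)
Your proof is correct and follows essentially the same route as the paper: replace $A$ by its SVD, reduce (via the unitary invariance of rank in Proposition~\ref{pp4.2}) to bounding the rank and appreciable rank of a product of the block-diagonal factor with $\hat U_A^* B$, and then read off the bounds from the row structure of Proposition~\ref{pp4.3}. You are somewhat more explicit than the paper in verifying that the rows indexed $r_A+1,\dots,t_A$ have the form $(\text{complex row})\cdot\epsilon$ via $\epsilon^2=0$, which is indeed the one point Proposition~\ref{pp4.3} requires; and you handle the bounds in terms of $B$ by passing to $(AB)^*=B^*A^*$ and invoking invariance of rank under conjugate transposition, whereas the paper simply says the other direction holds ``similarly.'' Both are fine; no gap.
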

\begin{proof}
It suffices to prove
\begin{equation} \label{eq14}
{\rm Rank}(AB) \le {\rm Rank}(A),
\end{equation}
and
\begin{equation} \label{eq15}
{\rm ARank}(AB) \le {\rm ARank}(A).
\end{equation}
The other parts hold similarly.   By Theorem \ref{SVD.1}, there are unitary matrices $U_A$ and $V_A$ such that (\ref{e20.1}) holds.    Then (\ref{eq14}) and (\ref{eq15}) are equivalent to
\begin{equation} \label{eq16}
{\rm Rank}(U_ADV_AB) \le {\rm Rank}(U_ADV_A),
\end{equation}
and
\begin{equation} \label{eq17}
{\rm ARank}(U_ADV_AB) \le {\rm ARank}(U_ADV_A).
\end{equation}
By Proposition \ref{pp4.2}, it suffices to prove
\begin{equation} \label{eq18}
{\rm Rank}(DC) \le {\rm Rank}(D),
\end{equation}
and
\begin{equation} \label{eq19}
{\rm ARank}(DC) \le {\rm ARank}(D),
\end{equation}
where $D$ is the diagonal matrix in (\ref{e20.1}), $C = V_AB$.   Now the conclusion follows from Proposition \ref{pp4.3}.
\end{proof}

\begin{Cor}
Suppose that $A \in {\mathbb {DC}}^{m \times n}$.   Then
\begin{equation} \label{eq20}
{\rm Rank}(A) = \max \{ r : A = BC, B \in {\mathbb {DC}}^{m \times r}, C \in {\mathbb {DC}}^{r \times n} \}.
\end{equation}
\end{Cor}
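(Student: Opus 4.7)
The plan is to apply the SVD (Theorem~\ref{SVD.1}) to produce a factorization at $r = {\rm Rank}(A)$, and to apply Theorem~\ref{Rank} to bound $r$ from below for any admissible factorization. Together these pin the feasibility set.

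Setting $t = {\rm Rank}(A)$, Theorem~\ref{SVD.1} gives unitaries $\hat V \in {\mathbb{DC}}^{m \times m}$ and $\hat U \in {\mathbb{DC}}^{n \times n}$ with $\hat V^* A \hat U = \begin{bmatrix}\Sigma_t & O \\ O & O\end{bmatrix}$. I would partition $\hat V = [\hat V_1,\, \hat V_2]$ and $\hat U = [\hat U_1,\, \hat U_2]$ where $\hat V_1 \in {\mathbb{DC}}^{m\times t}$ and $\hat U_1 \in {\mathbb{DC}}^{n\times t}$, so that multiplying the block form out gives $A = (\hat V_1 \Sigma_t)\,\hat U_1^*$. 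Taking $B := \hat V_1 \Sigma_t \in {\mathbb{DC}}^{m\times t}$ and $C := \hat U_1^* \in {\mathbb{DC}}^{t\times n}$ exhibits a witness showing that $t$ lies in the feasibility set $\mathcal{S}_A := \{r : A = BC,\ B\in {\mathbb{DC}}^{m\times r},\ C\in {\mathbb{DC}}^{r\times n}\}$.

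For the reverse bound, given any $r \in \mathcal{S}_A$ with witness $A = BC$, Theorem~\ref{Rank} gives ${\rm Rank}(A) \le \min\{{\rm Rank}(B), {\rm Rank}(C)\}$, and applying Theorem~\ref{SVD.1} to $B$ alone (which has only $r$ columns, hence at most $r$ positive singular values) shows ${\rm Rank}(B) \le r$. So every $r \in \mathcal{S}_A$ satisfies $r \ge {\rm Rank}(A)$.

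The main obstacle I anticipate is interpretive rather than technical: the statement writes \emph{max}, but padding any factorization $A = BC$ by appending a zero column to $B$ and a zero row to $C$ preserves $A = BC$ and raises $r$ by one, so $\mathcal{S}_A$ is unbounded above and the literal maximum is $+\infty$. The identity is therefore meaningful only under the standard rank-factorization reading with $\min$ in place of $\max$; under that reading the two constructions above establish ${\rm Rank}(A) = \min \mathcal{S}_A$ and close the proof without further machinery.
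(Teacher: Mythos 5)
Your proof is correct and follows essentially the same route as the paper's: Theorem~\ref{SVD.1} supplies a factorization witnessing $r={\rm Rank}(A)$, and Theorem~\ref{Rank} forces ${\rm Rank}(A)\le r$ for every admissible factorization $A=BC$. Your observation about padding is also well taken --- the feasibility set is unbounded above, so the $\max$ in \eqref{eq20} should read $\min$, and that is exactly the version the paper's own two-line proof establishes.
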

\begin{proof}
By Theorem \ref{Rank}, the left side of (\ref{eq20}) is less than or equal to the right hand side of (\ref{eq20}).  Combining with the SVD of $A$ as stated in Theorem \ref{SVD.1}, the equality follows.
\end{proof}

\begin{Cor}\label{cor6.6} Suppose that $A \in {\mathbb {DC}}^{m \times n}$ and $B \in {\mathbb {DC}}^{q \times n}$. Let $C = \left[\begin{array}{c}
                                                                                                                            A \\
                                                                                                                            B
                                                                                                                          \end{array}\right]\in {\mathbb {DC}}^{(m+q) \times n}$. Then
\begin{equation}\label{augment}
{\rm Rank}(C) \leq {\rm Rank}(A)+{\rm Rank}(B), ~~{\rm ARank}(C) \leq {\rm ARank}(A)+{\rm ARank}(B).
\end{equation}
\end{Cor}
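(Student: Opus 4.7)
The plan is to factor $A$ and $B$ through their SVDs as products of lower-dimensional matrices, assemble $C$ from these factors, and then combine Theorem \ref{Rank} with Proposition \ref{pp4.3} to extract both rank bounds in one pass.

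Setting $t_A = {\rm Rank}(A)$, $r_A = {\rm ARank}(A)$, and applying Theorem \ref{SVD.1}, I would first discard the trailing zero blocks in the SVD to write $A = P_A Q_A$, where $P_A \in {\mathbb{DC}}^{m \times t_A}$ consists of the first $t_A$ columns of $\hat{V}_A$ and $Q_A = \Sigma_{t_A}^A \hat{U}_{A,1:t_A}^* \in {\mathbb{DC}}^{t_A \times n}$. Since the first $r_A$ diagonal entries of $\Sigma_{t_A}^A$ are appreciable and the remaining $t_A - r_A$ are infinitesimal, the first $r_A$ rows of $Q_A$ are appreciable dual complex rows, while the last $t_A - r_A$ rows are of the form (complex row)$\cdot\epsilon$. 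The matrix $B$ is factored analogously as $B = P_B Q_B$ with $t_B = {\rm Rank}(B)$ and $r_B = {\rm ARank}(B)$.

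I would then write
\begin{equation*}
C = \begin{bmatrix} A \\ B \end{bmatrix} = \begin{bmatrix} P_A & O \\ O & P_B \end{bmatrix} \begin{bmatrix} Q_A \\ Q_B \end{bmatrix} =: \tilde{P}\tilde{Q},
\end{equation*}
so that by Theorem \ref{Rank}, ${\rm Rank}(C) \le {\rm Rank}(\tilde{Q})$ and ${\rm ARank}(C) \le {\rm ARank}(\tilde{Q})$. A real row-permutation matrix is unitary, so by Proposition \ref{pp4.2} left-multiplication by it preserves both Rank and ARank of $\tilde{Q}$; I would use such a permutation to group the $r_A + r_B$ appreciable rows first and the $(t_A - r_A) + (t_B - r_B)$ infinitesimal rows afterwards. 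The permuted matrix matches the template of Proposition \ref{pp4.3} with $r = r_A + r_B$ and $s = t_A + t_B$, yielding ${\rm Rank}(\tilde{Q}) \le t_A + t_B$ and ${\rm ARank}(\tilde{Q}) \le r_A + r_B$, which are exactly the two inequalities required.

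The main subtle point, rather than a genuine obstacle, is verifying the row-wise structure of $Q_A$ coming out of the SVD: for an infinitesimal singular value $\mu_i = m_i\epsilon$ multiplied against the appreciable $i$-th row $\vw_i$ of $\hat{U}_{A,1:t_A}^*$, the product collapses via $\epsilon^2 = 0$ to $m_i\,(\vw_i)_{st}\,\epsilon$, a pure complex row times $\epsilon$, and hence fits the hypothesis of Proposition \ref{pp4.3}. The remaining algebra is routine bookkeeping.
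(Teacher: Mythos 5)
Your proof is correct. It shares the paper's starting point (SVD plus Theorem~\ref{Rank}) but the middle step is genuinely different. The paper takes the block-diagonal matrix $W=\mathrm{diag}(A,B)$, observes that stacking the two SVDs gives an SVD of $W$, hence $\mathrm{Rank}(W)=\mathrm{Rank}(A)+\mathrm{Rank}(B)$ and likewise for ARank, and then writes $C = W\left[\begin{smallmatrix}I_n\\ I_n\end{smallmatrix}\right]$ so that Theorem~\ref{Rank} finishes the job in one step. You instead use \emph{reduced} SVD factorizations $A=P_AQ_A$, $B=P_BQ_B$, assemble $C = \tilde P\tilde Q$ with $\tilde P=\mathrm{diag}(P_A,P_B)$ and $\tilde Q=\left[\begin{smallmatrix}Q_A\\ Q_B\end{smallmatrix}\right]$, and then bound $\mathrm{Rank}(\tilde Q)$ and $\mathrm{ARank}(\tilde Q)$ by inspecting the row structure of $\tilde Q$ (appreciable rows from appreciable singular values, purely infinitesimal rows from infinitesimal ones), permuting the rows (a unitary operation, so Proposition~\ref{pp4.2} applies), and invoking Proposition~\ref{pp4.3}. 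The paper's version is tighter and shorter because it identifies an auxiliary matrix $W$ whose rank and appreciable rank it can compute exactly; yours trades that exact computation for the cruder-but-adequate Proposition~\ref{pp4.3} bound, at the cost of the extra bookkeeping about row structure and a permutation step, but it correctly exercises the chain $\epsilon^2=0 \Rightarrow$ infinitesimal rows stay of the form $(\text{complex})\cdot\epsilon$, which is exactly what Proposition~\ref{pp4.3} needs. Both are valid; the paper's is the more economical argument.
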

\begin{proof} Let $A = U_A \Sigma_A V_A^*$ and $B = U_B \Sigma_B V_B^*$ be the singular value decompositions of $A$ and $B$, respectively. By setting $U = {\rm diag}(U_A, U_B)$, $V= {\rm diag}(V_A, V_B)$, and $W = \left[ \begin{array}{cc}
                                 A & O \\
                                 O & B \\
                               \end{array}
                             \right] $,  one can verify that $U$ and $V$ are unitary dual complex matrices, and
$$ W = U\left[ \begin{array}{cc}
                                 \Sigma_A & O \\
                                 O & \Sigma_B\\
                               \end{array}
                             \right] V^*.$$
Thus, ${\rm Rank}(W) = {\rm Rank}(A)+{\rm Rank}(B)$ and ${\rm ARank}(W) = {\rm ARank}(A)+{\rm ARank}(B)$.
Note that $C = W\left[\begin{array}{c}
                                      I_n \\
                                      I_n
                                    \end{array} \right]$.
The assertions follow readily from Theorem \ref{Rank}.

\end{proof}

\begin{Cor}\label{cor6.7} Suppose that $A$, $B \in {\mathbb {DC}}^{m \times n}$. Then
\begin{equation}\label{sum1}
 {\rm Rank}(A)-{\rm Rank}(B) \leq {\rm Rank}(A+B) \leq {\rm Rank}(A)+{\rm Rank}(B),
\end{equation}
\begin{equation}\label{sum2}
 {\rm ARank}(A)-{\rm ARank}(B) \leq{\rm ARank}(A+B) \leq {\rm ARank}(A)+{\rm ARank}(B).
\end{equation}
\end{Cor}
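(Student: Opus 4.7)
The plan is to derive both inequalities from Corollary \ref{cor6.6} together with Theorem \ref{Rank}, by exploiting the identity
$$A+B = \bigl[\,I_m,\ I_m\,\bigr]\begin{bmatrix} A \\ B \end{bmatrix},$$
which expresses $A+B$ as a product of a dual complex matrix with the stacked matrix $C = \left[\begin{smallmatrix} A \\ B \end{smallmatrix}\right] \in {\mathbb{DC}}^{(2m)\times n}$. Since $[I_m,\,I_m] \in {\mathbb{DC}}^{m \times 2m}$, Theorem \ref{Rank} gives ${\rm Rank}(A+B)\le {\rm Rank}(C)$ and ${\rm ARank}(A+B)\le {\rm ARank}(C)$, and Corollary \ref{cor6.6} bounds the right-hand sides by ${\rm Rank}(A)+{\rm Rank}(B)$ and ${\rm ARank}(A)+{\rm ARank}(B)$, respectively. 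This yields the upper bounds in \eqref{sum1} and \eqref{sum2}.

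For the lower bounds, I would apply the upper bounds just established to the decomposition $A = (A+B) + (-B)$, which gives
$${\rm Rank}(A)\le {\rm Rank}(A+B) + {\rm Rank}(-B),\qquad {\rm ARank}(A)\le {\rm ARank}(A+B) + {\rm ARank}(-B).$$
To close the argument, I need the observation that ${\rm Rank}(-B) = {\rm Rank}(B)$ and ${\rm ARank}(-B) = {\rm ARank}(B)$. This follows from the SVD in Theorem \ref{SVD.1}: if $B = \hat V \Sigma \hat U^*$, then $-B = \hat V(-\Sigma)\hat U^* = (-\hat V)\Sigma \hat U^*$, and since $-\hat V$ is also unitary and $\Sigma$ is unchanged, the singular values (and hence both notions of rank) are preserved. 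Rearranging then gives ${\rm Rank}(A)-{\rm Rank}(B)\le {\rm Rank}(A+B)$ and the analogous ARank inequality.

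I expect no serious obstacle here; the argument is parallel to the classical complex case, and the two ingredients needed (Theorem \ref{Rank} and Corollary \ref{cor6.6}) have already been established in the dual complex setting. The only minor subtlety is verifying that negation preserves Rank and ARank, which I would handle via the SVD observation above rather than by appealing to any scalar identity like $|-1|=1$.
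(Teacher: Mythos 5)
Your proof is correct and follows essentially the same route as the paper: the factorization $A+B = [I_m,\,I_m]\left[\begin{smallmatrix}A\\B\end{smallmatrix}\right]$ combined with Theorem \ref{Rank} and Corollary \ref{cor6.6} for the upper bound, and the decomposition $A=(A+B)+(-B)$ with ${\rm Rank}(-B)={\rm Rank}(B)$ for the lower bound. The only difference is that you spell out the SVD-based justification for ${\rm Rank}(-B)={\rm Rank}(B)$, which the paper takes as a stated fact.
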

\begin{proof} Note that $A+B = \left[I_m, I_m\right]
\left[\begin{array}{c}    A \\
     B \end{array}\right]
$. The assertion ${\rm Rank}(A+B) \leq {\rm Rank}(A)+{\rm Rank}(B)$ follows directly from Theorem \ref{Rank} and Corollary \ref{cor6.6}. By writing $A = (A+B)+(-B)$, together with the fact Rank$(-B)=$ Rank$(B)$, we have Rank$(A) \leq $ Rank$(A+B)+$ Rank$(B)$. The inequalities for the appreciable rank can be proved in the same manner.
\end{proof}

The following theorem indicates that the standard part of the singular values of a dual complex matrix are exactly the singular values of the standard part of that dual complex matrix, and the appreciable rank of a dual complex matrix is exactly the rank of the standard part of that dual complex matrix.

\begin{Thm}
A dual complex matrix $A = A_{st} + A_\I\epsilon \in {\mathbb {DC}}^{m \times n}$ has singular values
$\sigma_{1, st} + \sigma_{1, \I}\epsilon \ge \cdots \ge \sigma_{\min\{m, n\}, st} + \sigma_{\min \{ m, n\}, \I}\epsilon$ if and only if the complex matrix $A_{st}$ has singular values $\sigma_{1, st} \ge \cdots \ge \sigma_{\min \{ m, n\}, st}$.   We also have
\begin{equation} \label{eq21}
{\rm ARank}(A) = {\rm Rank}(A_{st}).
\end{equation}
\end{Thm}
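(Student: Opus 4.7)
The plan is to derive both statements in one shot from Theorem \ref{SVD.1} by carefully taking standard parts. Applying Theorem \ref{SVD.1} to $A$, obtain unitary $\hat V \in {\mathbb{DC}}^{m\times m}$, $\hat U \in {\mathbb{DC}}^{n\times n}$, an integer $r \le t \le \min\{m,n\}$, and dual numbers $\mu_1 \ge \cdots \ge \mu_r$ (appreciable) and $\mu_{r+1} \ge \cdots \ge \mu_t$ (infinitesimal) such that $\hat V^* A \hat U$ is the $m\times n$ matrix whose leading $t\times t$ block is $\Sigma_t = {\rm diag}(\mu_1,\ldots,\mu_t)$ and whose other entries are zero. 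By definition ${\rm ARank}(A)=r$ and the singular values of $A$ are $\mu_1,\ldots,\mu_t$ followed by zeros.

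Next, split $\hat V = \hat V_{st} + \hat V_\I \epsilon$, $\hat U = \hat U_{st} + \hat U_\I \epsilon$ and $\mu_i = \mu_{i,st}+\mu_{i,\I}\epsilon$. Proposition \ref{pp3.1} (applied to unitary matrices, a special case of partially unitary) guarantees that $\hat V_{st}$ and $\hat U_{st}$ are complex unitary. Equating the standard parts of $\hat V^* A \hat U$ and its diagonal form gives
\begin{equation*}
\hat V_{st}^*\, A_{st}\, \hat U_{st} \;=\; {\rm diag}\bigl(\mu_{1,st},\ldots,\mu_{r,st},\,\mu_{r+1,st},\ldots,\mu_{t,st},\,0,\ldots,0\bigr),
\end{equation*}
where the last block is $m\times n$. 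By the definition of appreciable versus infinitesimal, $\mu_{i,st}>0$ for $1 \le i \le r$ and $\mu_{i,st}=0$ for $r<i\le t$. Thus the right-hand side is a real diagonal matrix with nonincreasing nonnegative diagonal entries, exactly $r$ of which are positive, and the whole equation is a genuine complex SVD of $A_{st}$.

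From uniqueness of singular values of a complex matrix, the diagonal entries $\mu_{i,st}$ (together with the trailing zeros) are exactly the singular values of $A_{st}$ in nonincreasing order, which establishes the forward implication, and the reverse implication follows immediately because a complex matrix's singular values are uniquely determined as real numbers: the standard parts of the $\mu_i$'s must coincide with the $\sigma_{i,st}$'s of $A_{st}$ and vice versa. Counting the positive ones gives ${\rm Rank}(A_{st})=r={\rm ARank}(A)$, which is the identity \eqref{eq21}.

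The only step that requires any care is verifying that $\hat V_{st}$ and $\hat U_{st}$ are complex unitary and that the standard-part equation really is the standard complex SVD (not merely a diagonalization); both points are immediate from Proposition \ref{pp3.1} and from the observation that the ordering of singular values by the dual total order $\le$ forces their standard parts to appear in nonincreasing order. I expect no serious obstacle: the substantive work was already done in Theorems \ref{t4.1}, \ref{special-Herm.1}, and \ref{SVD.1}, and this theorem is essentially a bookkeeping corollary of taking standard parts in the SVD identity.
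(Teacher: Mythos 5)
Your proposal is correct and follows the same route as the paper: apply Theorem \ref{SVD.1}, take standard parts of the SVD identity, invoke Proposition \ref{pp3.1} to see that the standard parts of the dual complex unitary factors are complex unitary, observe that the total order on dual numbers forces the standard parts $\mu_{i,st}$ to be nonincreasing, and conclude that this yields the complex SVD of $A_{st}$. The identity ${\rm ARank}(A) = {\rm Rank}(A_{st})$ then follows by counting the appreciable singular values, exactly as in the paper.
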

\begin{proof}
In (\ref{e20.1}), write $U_A = U_{A, st}+ U_{A, \I}\epsilon$, $D = D_{st}+ D_{\I}\epsilon$ and
$V_A = V_{A, st}+ V_{A, \I}\epsilon$.  Then we have
\begin{equation} \label{eq22}
A_{st} = U_{A, st}D_{st}V_{A, st}.
\end{equation}
By Proposition \ref{pp3.1}, $U_{A, st}$ and $V_{A, st}$ are unitary.   Then (\ref{eq22}) is a singular value decomposition of the complex matrix $A_{st}$.   The conclusions follow from this.
\end{proof}

\section{An Eckart-Young Like Theorem for Dual Complex Matrices}\label{Sec.7}
For large scale dual complex matrices, it is of great interest to extract its low rank approximation, for the sake of data reduction. In this section, the low 
rank approximation of a given dual complex matrix will be discussed, by establishing an Eckart-Young like theorem analogous to the case of complex matrices.

\begin{Thm}\label{EY-Thm}
Suppose that $A\in \mathbb{DC}^{m\times n}$ has singular value decomposition $A=\sum_{j=1}^r\mu_j{\bf u}_j{\bf v}_j^*$. If $k\leq r$. Then the matrix $A_k=\sum_{j=1}^k\mu_j{\bf u}_j{\bf v}_j^*$ satisfies
$$
\|A-A_k\|_F\leq \|A-B\|_F
$$
for any $B\in \mathbb{DC}^{m\times n}$ with rank at most $k$.
\end{Thm}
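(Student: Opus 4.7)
The plan is to follow the classical Eckart--Young argument: reduce to the diagonal case via unitary invariance, then use an orthogonal-projection decomposition and a trace inequality. Using Theorem~\ref{SVD.1}, I write $A = \hat V\Sigma\hat U^*$; by Theorem~\ref{tt3.3} and Proposition~\ref{pp4.2}, replacing $B$ with $\hat V^*B\hat U$ preserves both $\|A-B\|_F$ and the rank, so it suffices to prove $\|\Sigma - B\|_F \ge \|\Sigma - \Sigma_k\|_F$ for every $B \in \mathbb{DC}^{m\times n}$ of rank at most $k$, where $\Sigma_k$ is the leading rank-$k$ truncation of $\Sigma$. For such a $B$, applying Theorem~\ref{SVD.1} once more (and Corollary~\ref{c3.4} to pad with extra orthonormal columns if the rank of $B$ is strictly less than $k$) yields a factorization $B = QR$ with $Q \in \mathbb{DC}^{m\times k}$ partially unitary; I then set $P := QQ^*$, which satisfies $P^2 = P = P^*$ and $(I-P)B = 0$.

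These relations give the orthogonality ${\rm tr}\bigl((\Sigma - P\Sigma)^*(P\Sigma - B)\bigr) = 0$, hence the Pythagorean-type identity
\begin{equation*}
\sum_{i,j}|(\Sigma - B)_{ij}|^2 = \sum_{i,j}|((I-P)\Sigma)_{ij}|^2 + \sum_{i,j}|(P\Sigma - B)_{ij}|^2
\end{equation*}
as nonnegative dual numbers, and a second orthogonal split further gives $\sum_{i,j}|((I-P)\Sigma)_{ij}|^2 = \sum_{i,j}|\Sigma_{ij}|^2 - \sum_{i,j}|(P\Sigma)_{ij}|^2$. Since $\sum_{i,j}|(P\Sigma)_{ij}|^2 = {\rm tr}(P\Sigma\Sigma^*) = \sum_i P_{ii}\mu_i^2$ with $P_{ii} = \sum_j|Q_{ij}|^2$, the core step is the trace bound $\sum_i P_{ii}\mu_i^2 \le \sum_{i=1}^k\mu_i^2$. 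For this I would verify that $P_{ii}$ is a nonnegative dual number, that $\sum_i P_{ii} = {\rm tr}(Q^*Q) = k$, and that $P_{ii}(1-P_{ii}) \ge 0$ (from $P = P^2$, which yields $P_{ii} = \sum_l|P_{il}|^2 \ge P_{ii}^2$); together these force $0\le P_{ii}\le 1$ in the dual total order. The classical rearrangement
\begin{equation*}
\sum_{i=1}^k \mu_i^2 - \sum_i P_{ii}\mu_i^2 = \sum_{i=1}^k (1-P_{ii})\mu_i^2 - \sum_{i>k} P_{ii}\mu_i^2 \ge (\mu_k^2 - \mu_{k+1}^2)\sum_{i>k} P_{ii} \ge 0,
\end{equation*}
based on $\sum_{i\le k}(1-P_{ii}) = \sum_{i>k} P_{ii}$ together with the monotonicity $\mu_1^2 \ge \mu_2^2 \ge \cdots \ge 0$ inherited from Theorem~\ref{SVD.1}, then closes the estimate, giving $\sum_{i,j}|(\Sigma-B)_{ij}|^2 \ge \sum_{i>k}\mu_i^2 = \sum_{i,j}|(\Sigma-\Sigma_k)_{ij}|^2$.

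The step I expect to be most delicate is the passage from this squared-magnitude inequality back to the norm inequality in the dual total order, since the dual square root is monotone only on inputs with positive standard part. When $\Sigma - \Sigma_k$ is appreciable (equivalently, $k$ lies below the appreciable rank of $A$), the conversion is immediate. When $\Sigma - \Sigma_k$ is purely infinitesimal, however, both sides of the squared inequality collapse to zero and a separate argument on the infinitesimal parts is required; here one uses that any $B$ with infinitesimal $A-B$ must satisfy $B_{st} = A_{st}$, whereupon the comparison reduces to a classical complex Eckart--Young inequality for $A_\I - B_\I$ versus $A_\I - (A_k)_\I$. Apart from this regime split, every step is a direct transplant of the classical complex argument once one confirms the requisite sign and ordering properties of sums and products of nonnegative dual numbers.
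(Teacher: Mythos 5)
Your approach---reduce to the diagonal case via unitary invariance, factor $B = QR$ with $Q$ partially unitary (padding via Corollary~\ref{c3.4} if necessary), set $P = QQ^*$, and close by the trace estimate $\sum_i P_{ii}\mu_i^2 \le \sum_{i=1}^k\mu_i^2$---is genuinely different from the paper's. The paper works with $C = A-B$ directly, establishes variational (max-type) characterizations of the singular values $\gamma_j$ of $C$, and deduces $\gamma_j\ge \mu_{k+j}$ by exhibiting a unit vector in the kernel of a suitably stacked matrix, relying on Corollary~\ref{cor6.6}; the conclusion then follows by comparing dual vector $2$-norms. Your order-theoretic verifications for the trace bound are all correct (each $P_{ii}$ is a nonnegative dual number, $P_{ii}\ge P_{ii}^2$ gives $0\le P_{ii}\le 1$, $\sum_iP_{ii}=k$, $\mu_k^2\ge\mu_{k+1}^2$, and products of nonnegative dual numbers are nonnegative), and you have correctly spotted the real delicacy: because $\epsilon^2=0$, the dual number $\sum_{i,j}|M_{ij}|^2$ is identically zero whenever $M$ is infinitesimal, so for $k\ge{\rm ARank}(A)$ and $B_{st}=A_{st}$ your squared-magnitude inequality degenerates to $0\ge0$ and says nothing. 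The paper's route does not hit this wall, because the dual-number inequalities $\gamma_j\ge\mu_{k+j}$ carry standard and infinitesimal information together all the way through the dual $2$-norm.

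The repair you sketch for that regime is, however, wrong as stated. You claim the problem ``reduces to a classical complex Eckart--Young inequality for $A_\I - B_\I$ versus $A_\I - (A_k)_\I$,'' but ${\rm Rank}(B)\le k$ imposes \emph{no} bound on ${\rm rank}(B_\I)$. Working in the diagonalized picture with $s={\rm ARank}(A)$ and $B_\I$ partitioned conformally as $\left(\begin{smallmatrix}X & Y\\ Z & W\end{smallmatrix}\right)$, block row and column operations by invertible (not merely unitary) dual complex matrices---which preserve Rank by Theorem~\ref{Rank}---show ${\rm Rank}(B)=s+{\rm rank}(W)$, while $X$, $Y$, $Z$ are entirely free. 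Moreover $(\Sigma_k)_\I$ is not in general a best rank-$k$ complex approximant of $\Sigma_\I$: it retains the diagonal entries $(\mu_i)_\I$ for $i\le k$, but for $i\le s$ these may have arbitrary sign and magnitude and need not coincide with the dominant singular values of $\Sigma_\I$. A correct argument must therefore pass through the blocks: use $\|\Sigma_\I-B_\I\|_F^2\ge\|D_2-W\|_F^2$, where $D_2={\rm diag}\left((\mu_{s+1})_\I,\ldots,(\mu_r)_\I,0,\ldots\right)$ is the lower-right block of $\Sigma_\I$, and then invoke the classical complex Eckart--Young theorem for the pair $(D_2,W)$ with ${\rm rank}(W)\le k-s$. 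Without this extra structure, which your proposal omits, the proof does not go through once $k\ge{\rm ARank}(A)$.
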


\begin{proof}
Recall that $\|UDV \|_F=\|D\|_F$ for any dual complex matrix $D = D_{st}+D_{\I} \epsilon\in \mathbb{DC}^{m\times n}$ and dual complex unitary matrix $U\in \mathbb{DC}^{m\times m}$ and dual complex unitary matrix $V\in \mathbb{DC}^{n\times n}$.  If $D$ has nonzero singular values $\sigma_1(D)\geq\sigma_2(D)\geq\ldots\geq\sigma_r(D)\geq 0$, then
$$\|D\|_F=\left\|\left(
                         \begin{array}{c}
                           \sigma_1(D) \\
                           \sigma_2(D) \\
                           \vdots \\
                           \sigma_r(D) \\
                         \end{array}
                       \right)
\right\|_2.
$$

Take any $B\in \mathbb{DC}^{m\times n}$ with rank at most $k\leq r$. Let $C=A-B$. Denote $l= {\rm Rank}(C)$, and suppose that $C$ has singular value decomposition $C=\sum_{j=1}^l\gamma_j{\bf x}_j{\bf y}_j^*$ with $\gamma_1\geq\gamma_2\geq\ldots\geq\gamma_l\geq0$ and orthonormal sets $\{{\bf x}_1,{\bf x}_2,\ldots,{\bf x}_l\}\subset \mathbb{DC}^m$ and $\{{\bf y}_1,{\bf y}_2,\ldots,{\bf y}_l\}\subset \mathbb{DC}^n$. By
Corollary \ref{c3.4}, the set $\{{\bf y}_1,{\bf y}_2,\ldots,{\bf y}_l\}\subset \mathbb{DC}^n$ can be extended to an orthonormal basis $\{{\bf y}_1,{\bf y}_2,\ldots,{\bf y}_l,\ldots,{\bf y}_n\}$ for $\mathbb{DC}^n$. Then every unit vector ${\bf z}\in \mathbb{DC}^n$ can be written as ${\bf z}=\sum_{i=1}^na_i{\bf y}_i$
for a unit vector $(a_1,a_2\ldots,a_n)^\top\in \mathbb{DC}^n$ so that
$$
\|C{\bf z}\|_2 =\left\|\left(\sum_{j=1}^l\gamma_j{\bf x}_j{\bf y}_j^*\right)\left(\sum_{i=1}^na_i{\bf y}_i\right)\right\|_2
=\left\|\sum_{j=1}^l\gamma_ja_j{\bf x}_j\right\|_2 =\left\|\left(
                         \begin{array}{c}
                           \gamma_1a_1 \\
                           \vdots \\
                           \gamma_la_l \\
                         \end{array}
                       \right)
\right\|_2,
$$
where the last equality follows from Proposition \ref{pp3.2}.
By this, together with dual numbers $\gamma_1\geq \gamma_2\geq \cdots \geq \gamma_l\geq 0$, we have $\gamma_1=|\gamma_1|=\|C{\bf y}_1\|_2$ and for any unit vector ${\bf z}\in \mathbb{DC}^n$,
$$\|C{\bf {\bf z}}\|_2 
\leq\gamma_1\left\|\left(
                         \begin{array}{c}
                            a_1 \\
                           \vdots \\
                            a_l \\
                         \end{array}
                       \right)
\right\|_2 \leq \gamma_1.
$$
Therefore, we have
\begin{equation}\label{Eq-1}
\gamma_1=\max\left\{\|C{\bf z}\|_2~:~{\bf z}\in \mathbb{DC}^n~{\rm with}~\|{\bf z}\|_2=1\right\},
\end{equation}
and
\begin{equation}\label{Eq-2}
\gamma_j=\max\{\|C{\bf z}\|_2~:~{\bf z}\in \mathbb{DC}^n~{\rm with}~\|{\bf z}\|_2=1~{\rm and}~{\bf y}_1^*{\bf z}=\ldots={\bf y}_{j-1}^*{\bf z}=0\}
\end{equation}
for $j=2,\ldots, l$.
Since $B$ has rank at most $k$, the matrix $B[{\bf v}_1,\ldots,{\bf v}_{k+1}]\in \mathbb{DC}^{m\times (k+1)}$ has also rank at most $k$ from Theorem \ref{Rank}. Hence, there is a unit vector $\tilde{{\bf w}}=(w_1,w_2\ldots,w_{k+1})^\top \in \mathbb{DC}^{k+1}$ satisfying $B[{\bf v}_1,\ldots,{\bf v}_{k+1}]\tilde{{\bf w}}=0$. 
Consider the unit vector
$\tilde{\bf z}=[{\bf v}_1,\ldots,{\bf v}_{k+1}]\tilde{{\bf w}}\in \mathbb{DC}^{n}$. By (\ref{Eq-1}), we have
\begin{equation}\label{Eq-3}
\gamma_1\geq\|C\tilde{\bf z}\|_2=\left\|\sum_{i=1}^{k+1}\mu_iw_i{\bf u}_i\right\|_2=\left\|\left(
                         \begin{array}{c}
                           \mu_1w_1 \\
                           \vdots \\
                           \mu_{k+1}w_{k+1} \\
                         \end{array}
                       \right)
\right\|_2 \geq \mu_{k+1}\left\| \tilde{{\bf w}}\right\|_2 = \mu_{k+1}.
\end{equation}

It is known from Corollary \ref{cor6.7} that $l\geq r-k$. If $l> r-k$, then set $\mu_{r+1}=\cdots = \mu_{l+k} =0$. Now we prove $\gamma_j\geq\mu_{k+j}$ for every $j=2,\ldots, l$. Let
$$
B_j=\left[\begin{array}{c}
B\\
{\bf y}_1^*\\
\vdots\\
{\bf y}_{j-1}^*
\end{array}\right].
$$
By applying Corollary \ref{cor6.6}, we know that $B_{j}\in \mathbb{DC}^{(m+j-1)\times n}$ has rank at most $k+j-1$, and so does the dual complex matrix $B_{j}[{\bf v}_1,\ldots,{\bf v}_{k+j}]\in \mathbb{DC}^{(m+j-1)\times (k+j)}$, due to Theorem \ref{Rank}. It is similar to the case $j=1$, that there is a unit vector $\tilde{{\bf w}}_j=(w_1,w_2\ldots,w_{k+j})^\top \in \mathbb{DC}^{k+j}$ satisfying $B_j[{\bf v}_1,\ldots,{\bf v}_{k+j}]\tilde{{\bf w}}_j=0$. Consider the unit vector
$\tilde{\bf z}=[{\bf v}_1,\ldots,{\bf v}_{k+j}]\tilde{{\bf w}}_j\in \mathbb{DC}^{n}$. It is obvious that ${\bf y}_1^*\tilde{{\bf z}}=\ldots={\bf y}_{j-1}^*\tilde{{\bf z}}=0$. Consequently, by (\ref{Eq-2}), we have
\begin{equation}\label{Eq-4}
\gamma_j\geq\|C\tilde{\bf z}\|_2=\left\|\sum_{i=1}^{k+j}\mu_iw_i{\bf u}_i\right\|_2=\left\|\left(
                         \begin{array}{c}
                           \mu_1w_1 \\
                           \vdots \\
                           \mu_{k+j}w_{k+j} \\
                         \end{array}
                       \right)
\right\|_2 \geq \mu_{k+j}\left\| \tilde{{\bf w}}_j\right\|_2 = \mu_{k+j}\geq 0,
\end{equation} for all $j=1,\cdots, l$. By (\ref{Eq-3}) and (\ref{Eq-4}), we have
$$\|A-A_k\|_F=\left\|\sum_{i=k+1}^r\mu_i{\bf u}_i{\bf v}_i^*\right\|_F= \left\|\left(
                         \begin{array}{c}
                           \mu_{k+1} \\
                           \vdots \\
                           \mu_{r}  \\
                         \end{array}
                       \right)\right\|_2 \leq \left\|\left(
                         \begin{array}{c}
                           \gamma_{1} \\
                           \vdots \\
                           \gamma_{r-k}  \\
                         \end{array}
                       \right)\right\|_2 \leq  \left\|\left(
                         \begin{array}{c}
                           \gamma_{1} \\
                           \vdots \\
                           \gamma_{r-k}\\
                           \vdots \\
                           \gamma_{l}  \\
                         \end{array}
                       \right)\right\|_2 = \|A-B\|_F,
$$
which  means that the desired result holds.
\end{proof}

\section{Numerical Experiments}

\subsection{Truncated SVD for Low-Rank Approximation}
This subsection is devoted to the approach of finding the best low-rank approximation of a dual complex matrix $A\in {\mathbb{DC}}^{m\times n}$ based on the truncated SVD. Before proceeding, we first consider the unitary decomposition of a Hermitian dual complex matrix of the form $B = A^* A$ with $A\in {\mathbb {DC}}^{m\times n}$. The algorithmic framework is stated in Algorithm \ref{Alg1}.

\begin{algorithm}
\SetAlgoLined
{\bf Input:} $A = A_{st}+ A_{\I} \epsilon \in  {\mathbb {DC}}^{m\times n}$ \\
{\bf Output:} Unitary $U_B = (U_B)_{st}+ (U_{B})_{\I} \epsilon \in  {\mathbb {DC}}^{n\times n}$, diagonal $\Sigma_B = (\Sigma_B)_{st}+ (\Sigma_{B})_{\I} \epsilon \in  {\mathbb {D}}^{n\times n}$
\begin{itemize}
  \item Compute the eigenvalue decomposition of $B_{st}: = A_{st}^* A_{st}$: 
            $$B_{st} = SDS^*,~~S\in {\mathbb C}^{n\times n}\mbox{~unitary}, ~~D = \mathrm{diag}(\lambda_1 I_{k_1},\cdots, \lambda_r I_{k_r}) \in {\mathbb R}^{n\times n} \mathrm{~with~} \lambda_1 > \cdots > \lambda_r\geq 0$$
  \item Compute $M = S^* B_{\I} S = \left(
                             \begin{array}{cccc}
                               C_{11} & C_{12}& \cdots & C_{1r} \\
                               C_{12}^*& C_{22} & \cdots & C_{2r} \\
                               \vdots & \vdots & \vdots & \vdots\\
                               C_{1r}^* & C_{2r}^* & \cdots & C_{rr} \\
                             \end{array}
                           \right)$
  \item  Compute $P_{\I} = \left(
                             \begin{array}{cccc}
                               O & \frac{C_{12}}{\lambda_1-\lambda_2}  & \cdots & \frac{C_{1r}}{\lambda_1-\lambda_r}  \\
                               -\frac{C_{12}^*}{\lambda_1-\lambda_2} & O & \cdots &   \frac{C_{2r}}{\lambda_2-\lambda_r}  \\
                               \vdots & \vdots & \vdots & \vdots \\
                                -\frac{C_{12}^*}{\lambda_1-\lambda_r}  &  -\frac{C_{2r}^*}{\lambda_2-\lambda_r}   & \cdots & O\\
                             \end{array}
                           \right)$
  \item For $i = 1,\cdots, r$, perform the eigenvalue decomposition of $C_{ii}$:
  $$ C_{ii} = V_i D_i V_i^*,~~ V_i\in {\mathbb C}^{k_i\times k_i}~\mathrm{unitary},~D_i = \mathrm{diag} (\lambda_{i,1},\cdots, \lambda_{i,k_i})$$
  \item Set $\hat{V}_B = \mathrm{diag}(V_1,\cdots, V_r)$, and compute
  $$(U_B)_{st} = S \hat{V}_B,~~(U_B)_{\I} = S P_{\I}^* \hat{V}_B.$$
  \item Set $$(\Sigma_B)_{st} = \mathrm{diag}\left(\underbrace{\lambda_1,\ldots, \lambda_1}_{k_1~\mathrm{times}}, \ldots, \underbrace{\lambda_r,\ldots, \lambda_r}_{k_r~\mathrm{times}}\right)$$ and
  $$(\Sigma_B)_{\I} = \mathrm{diag}\left(\lambda_{1,1},\ldots, \lambda_{1, k_1}, \ldots,  \lambda_{r,1},\ldots, \lambda_{r, k_r}\right)$$
  \end{itemize}
\caption{ Unitary Decomposition of $B = A^*A\in {\mathbb {DC}}^{n\times n}$ \label{Alg1}}
\end{algorithm}

Based on the unitary decomposition as presented in Algorithm \ref{Alg1}, we are in the position to present the procedure of the singular value decomposition of any given dual complex matrix $A\in {\mathbb{DC}}^{m\times n}$, see Algorithm \ref{Alg2}.

According to the Eckart-Young like theorem as stated in Theorem \ref{EY-Thm}, the low-rank approximation of $A\in {\mathbb{DC}}^{m\times n}$ can be obtained by the truncated SVD. Specifically, given $A\in {\mathbb{DC}}^{m\times n}$ and a positive integer $k<\min\{m,n\}$, Algorithm \ref{Alg2} can produce the SVD of $A$ in terms of
$A = V \Sigma U^*$. Then the best low-rank approximation of $A$ with rank no more than $k$, termed as $A_k$, can be obtained by
\begin{equation}\label{trunc}
A_k: = V_{:,1:k}\Sigma_{1:k,1:k} \left(U_{:,1:k}\right)^*.
\end{equation}
Note that such a low-rank approximation may not be unique. For example, when $\Sigma_{kk} = \Sigma_{(k+1)(k+1)}$, then both $A_k$ as defined in \eqref{trunc} and \begin{equation}\label{trunc1} \tilde{A}_k := V_{:,\Omega}\Sigma_{\Omega,\Omega} \left(U_{:,\Omega}\right)^* \end{equation} with $\Omega: =\{1,\ldots, k-1,k+1\}$ are best low-rank approximations of $A$ with rank no more than $k$ in the sense that \begin{equation}\label{trunc2}\|A-A_k\|_F = \|A-\tilde{A}_k\|_F.\end{equation}

\begin{algorithm}
\SetAlgoLined
 {\bf Input:} $A = A_{st}+ A_{\I} \epsilon \in  {\mathbb {DC}}^{m\times n}$ 
 \\
 {\bf Output:} \\
 The unitary matrices: $U = U_{st} + U_{\I}\epsilon \in {\mathbb {DC}}^{n\times n}$, $V = V_{st}+ V_{\I}\epsilon\in {\mathbb {DC}}^{m\times m}$;\\ The rectangular diagonal matrix of all the singular values: $\Sigma = \Sigma_{st}+ \Sigma_{\I}\epsilon \in {\mathbb {D}}^{m\times n}$. \vspace{0.3cm}\\
  \underline{Step 1:}  Decompose $B := A^* A\in {\mathbb {DC}}^{n\times n}$ by {\bf Algorithm \ref{Alg1}} to get $U_B = (U_B)_{st} + (U_B)_{\I} \epsilon \in {\mathbb {DC}}^{n\times n}$ and $\Sigma_B = (\Sigma_B)_{st}+ (\Sigma_{B})_{\I} \epsilon \in  {\mathbb {D}}^{n\times n}$ such that  $B = U_B \Sigma_B U_B^*$ \\ 
  \underline{Step 2:} Set $\lambda = \left(\lambda_1,\ldots, \lambda_r\right)$ including all distinct diagonal entries of $(\Sigma_B)_{st}$ and compute \begin{equation*}
      s = \|\lambda\|_0 =\sharp \{i: \lambda_i \neq 0, i\in \{1,\ldots, r\}\}, \mathrm{~and~} r_a=\sum\limits_{i=1}^s k_i.
  \end{equation*}
  Set \begin{eqnarray*}
      U^1_{st} = (U_B)_{st}(:, 1:r_a),& ~~U^2_{st} = (U_B)_{st}(:, r_a+1:n) \\
      U^1_{\I} = (U_B)_{\I}(:, 1:r_a),& ~~U^2_{\I} = (U_B)_{\I}(:, r_a+1:n)
  \end{eqnarray*}
  Compute
  $$
  \Sigma^1_{st} = \mathrm{diag}\left(\underbrace{\sqrt{\lambda_1},\ldots, \sqrt{\lambda_1}}_{k_1~\mathrm{times}}, \ldots, \underbrace{\sqrt{\lambda_s},\ldots, \sqrt{\lambda_s}}_{k_s~\mathrm{times}}\right)$$
  $$\Sigma^1_{\I} = \mathrm{diag}\left(\frac{\lambda_{1,1}}{2\sqrt{\lambda_1}},\ldots, \frac{\lambda_{1,k_1}}{2\sqrt{\lambda_1}}, \ldots, \frac{\lambda_{s,1}}{2\sqrt{\lambda_s}},\ldots, \frac{\lambda_{s,k_s}}{2\sqrt{\lambda_s}}\right)$$
    $$\Sigma_{\I}^{-} = \mathrm{diag}\left(\frac{\lambda_{1,1}}{-2\lambda_1\sqrt{\lambda_1}},\ldots, \frac{\lambda_{1,k_1}}{-2\lambda_1\sqrt{\lambda_1}}, \ldots, \frac{\lambda_{s,1}}{-2\lambda_s\sqrt{\lambda_s}},\ldots, \frac{\lambda_{s,k_s}}{-2\lambda_s\sqrt{\lambda_s}}\right)$$
    Compute $$V^1_{st} = A_{st}U^1_{st}(\Sigma^1_{st})^{-1},~~V^1_{\I} = A_{st}U^1_{st}\Sigma_{\I}^{-}+ A_{st}U^1_{\I}(\Sigma^1_{st})^{-1}+ A_{\I}U^1_{st}(\Sigma^1_{st})^{-1}$$

  \underline{Step 3:} For $V^1 = V^1_{st} + V^1_{\I}\epsilon \in {\mathbb {DC}}^{m\times r_a}$, find $V^2 = V^2_{st} + V^2_{\I}\epsilon \in {\mathbb {DC}}^{m\times (m-r_a)}$ such that $(V^1, V^2)$ is unitary according to Corollary \ref{c3.4}.

  \underline{Step 4:} Set
  $$ G = (V^2_{st})^* A_{st} U^2_{\I} + (V^2_{st})^* A_{\I} U^2_{st}  $$
  Decompose $G \in {\mathbb {C}}^{(m-r_a)\times (n-r_a)}$ by classical singular value decomposition to get unitary matrices $W_1 \in {\mathbb {C}}^{(m-r_a)\times (m-r_a)}$, $W_2 \in {\mathbb {C}}^{(n-r_a)\times (n-r_a)}$ and rectangular diagonal matrix $\Sigma_G \in {\mathbb{R}}^{(m-r_a)\times (n-r_a)}$ with diagonals in nonincreasing order such that  $ W_1^* G W_2= \Sigma_G$. Compute
  $$ U_{st} = (U^1_{st}, U^2_{st} W_2),\quad  U_{\I} = (U^1_{\I}, U^2_{\I} W_2),\quad  V_{st} = (V^1_{st}, V^2_{st} W_1), \quad  V_{\I} = (V^1_{\I}, V^2_{\I} W_1), $$
  $$ \Sigma_{st} = \mathrm{diag}\left( \Sigma^1_{st}, O_{(m-r_a)\times (n-r_a)} \right),  \quad \Sigma_{\I} = \mathrm{diag}\left( \Sigma^1_{\I}, \Sigma_G \right)
  $$


\caption{ Singular Value Decomposition of $A\in {\mathbb {DC}}^{m\times n}$ in terms of  $A = V\Sigma U^*$ \label{Alg2} }
\end{algorithm}


\subsection{Numerical Results}
In this subsection, we present some numerical results to show the efficiency of proposed algorithms. All the codes are written in Python 3.9.5. The numerical experiments were done on a Macbook notebook with an Intel m3 dual-core processor running at 1.2GHz and 8GB of RAM.

\begin{Exa}\label{Ex8.1} The dual complex matrix $A=A_{st} + A_\I\epsilon \in {\mathbb {DC}}^{8 \times 4}$ is given by
$$ A_{st} = \left[
\begin{array}{rrrr}
 0.0942+0.5476\ii & -0.0558-0.8419\ii & -0.0110+0.3480\ii & -0.5487-0.5106\ii \\
 0.8862+0.4817\ii &  0.9710-0.9956\ii &  0.8044-0.2831\ii &  0.4500+0.2713\ii \\
-0.7160+0.1136\ii &  0.1822-0.1160\ii &  0.1444-0.7116\ii &  0.4144+0.9641\ii \\
 0.4003-0.9425\ii & -0.5649+0.1447\ii & -0.8047+0.5493\ii &  0.7744-0.1893\ii \\
-0.8300-0.4158\ii &  0.0739-0.8446\ii &  0.8513-0.3221\ii & -0.2451-0.4130\ii \\
-0.1672+0.7633\ii & -0.1062+0.0883\ii & -0.9567+0.5460\ii &  0.1183-0.2970\ii \\
 0.9717-0.6890\ii &  0.9410-0.8403\ii &  0.5039-0.7443\ii &  0.4999+0.8501\ii \\
-0.0867-0.5021\ii &  0.2065-0.2641\ii & -0.8358-0.5494\ii &  0.7137+0.8727\ii
\end{array} \right]
$$
and
$$ A_{\I} = \left[
\begin{array}{rrrr}
-0.5010-0.8953\ii & 0.6144-0.1153\ii & -0.2793+0.2216\ii & -0.9906-0.8764\ii \\
 0.9656-0.5856\ii & 0.0786-0.3271\ii & -0.6626-0.9692\ii & -0.6339+0.2159\ii \\
 0.3707-0.9521\ii &-0.6612+0.7596\ii &  0.3158+0.8447\ii &  0.2254-0.8259\ii \\
 0.8779+0.8864\ii &-0.1376+0.7962\ii &  0.0601-0.8183\ii &  0.8345+0.7047\ii \\
-0.2593-0.7807\ii & 0.5011-0.3864\ii &  0.7560-0.1174\ii &  0.1853+0.1771\ii \\
-0.5777+0.2567\ii & 0.8656+0.8806\ii & -0.8558+0.0361\ii & -0.0326-0.7419\ii \\
 0.5477+0.7905\ii & 0.9992-0.0610\ii & -0.1922+0.5463\ii &  0.7753-0.0609\ii \\
 0.4876+0.6894\ii &-0.5506-0.5568\ii & -0.0593-0.6293\ii & -0.6266+0.4538\ii
\end{array} \right].
$$
By using Algorithm \ref{Alg2}, all the nonzero singular values of dual complex matrix $A$ are positive appreciable dual numbers given by
$$
\Sigma_A = {\rm diag}\left(3.4147+0.5451\epsilon, 2.4280+0.6444\epsilon, 2.1287-0.5667\epsilon, 0.8744+0.4006\epsilon \right).
$$
The corresponding partially unitary matrix $V=V_{st} + V_\I \epsilon \in {\mathbb {DC}}^{8 \times 4}$ is give by
$$ V_{st} = \left[
\begin{array}{rrrr}
 0.0279+0.0919\ii & -0.0116-0.1099\ii & -0.1357-0.5137\ii & -0.4708+0.3136\ii \\
 0.3506-0.3066\ii &  0.2628-0.0759\ii &  0.1530-0.4269\ii &  0.2549+0.0570\ii \\
 0.0351-0.2229\ii & -0.2700+0.2571\ii &  0.3419+0.1245\ii &  0.2949+0.3974\ii \\
-0.3474-0.0418\ii &  0.3878-0.1294\ii & -0.1672+0.3037\ii &  0.0640+0.1271\ii \\
 0.0142-0.0910\ii & -0.4078-0.4301\ii &  0.2274-0.2196\ii & -0.0112+0.1497\ii \\
-0.1792+0.2150\ii &  0.1925+0.2859\ii &  0.0422-0.2238\ii & -0.3141+0.1643\ii \\
 0.2704-0.5472\ii &  0.2167-0.1328\ii &  0.0826+0.0717\ii & -0.1222-0.0123\ii \\
-0.1599-0.3398\ii &  0.0926+0.2520\ii &  0.1853+0.2477\ii & -0.3413+0.2587\ii
\end{array} \right],
$$
$$ V_{\I} = \left[
\begin{array}{rrrr}
 0.0154+0.0519\ii & -0.8883-0.9381\ii & -0.2791-0.0629\ii &  0.2414-0.7158\ii \\
 0.0906-0.2965\ii & -0.5898-0.3918\ii & -0.8754-0.0513\ii &  0.1141-0.2964\ii \\
-0.2370+0.3518\ii &  0.7696-0.6322\ii &  0.5323+0.5947\ii & -0.3348+0.1037\ii \\
 0.2615-0.1980\ii &  0.4132+1.1643\ii & -0.9351-0.2609\ii &  0.5067-0.9837\ii \\
 0.0265-0.2820\ii &  0.0204-0.7098\ii &  0.2755+1.3629\ii &  0.5371+0.3169\ii \\
 0.0266+0.2995\ii & -0.0576-0.3514\ii &  0.4129-0.9124\ii & -0.1488-0.4585\ii \\
 0.1512+0.0934\ii &  0.5073+0.4991\ii & -0.0747-0.2496\ii &  0.0608-0.2841\ii \\
 0.1614+0.0240\ii &  0.3599+0.4377\ii & -0.1807-0.8833\ii & -1.0300+0.4473\ii
\end{array} \right]
$$
and the unitary matrix $U=U_{st} + U_\I \epsilon \in {\mathbb {DC}}^{4 \times 4}$ is given by
$$ U_{st} = \left[
\begin{array}{rrrr}
 0.3251-0.2489\ii &  0.6201           & -0.6262-0.0516\ii &  0.0401-0.2268\ii \\
 0.4908-0.0864\ii &  0.2937-0.0301\ii &  0.6427           & -0.4347-0.2497\ii \\
 0.5901           & -0.2697-0.4888\ii &  0.0600-0.0323\ii &  0.5769+0.0519\ii \\
-0.3634-0.3220\ii &  0.4604+0.0681\ii &  0.4329+0.0089\ii &  0.6001
\end{array} \right],
$$
$$ U_{\I} = \left[
\begin{array}{rrrr}
 0.1761-0.0836\ii & -0.9991+0.8785\ii & -0.5686-0.7263\ii &  0.3247-0.5953\ii \\
 0.2281+0.1830\ii &  0.7788-0.8423\ii & -0.2960-0.7558\ii & -0.0148+0.6665\ii \\
-0.1969-0.2544\ii & -0.3037-0.0170\ii & -0.2460+1.7207\ii &  0.1777-0.1192\ii \\
 0.1222+0.0423\ii &  0.6408-0.2896\ii & -0.3052-0.0871\ii & -0.1405+0.5359\ii
\end{array} \right].
$$
\end{Exa}

\begin{Exa}\label{Ex8.2}
We test dual complex matrices with multiple standard singular values.  The matrix $A=A_{st} + A_\I\epsilon \in {\mathbb {DC}}^{6 \times 4}$ is given by
$$ A_{st} = \left[
\begin{array}{rrrr}
 0.1041+0.3547\ii &  0.0592-0.1799\ii &  0.4438-0.4804\ii &  0.0835+0.4100\ii \\
-0.4672+0.3171\ii & -0.1917-0.3081\ii & -0.0317-0.0438\ii & -0.2192-0.4308\ii \\
-0.4863-0.0343\ii & -0.1600+0.5508\ii & -0.2348-0.0403\ii &  0.6214-0.1158\ii \\
-0.3769+0.4002\ii &  0.2875-0.4984\ii &  0.6286+0.2987\ii & -0.7597-0.1947\ii \\
 0.2583-0.4642\ii & -0.0509-0.3241\ii & -0.2919-0.0071\ii & -0.0091-0.1178\ii \\
 0.4598+0.4482\ii & -0.6406-0.0624\ii & -0.1258-0.2293\ii & -0.5014+0.3828\ii
\end{array} \right]
$$
and
$$ A_{\I} = \left[
\begin{array}{rrrr}
-0.8515+0.6585\ii &  0.1589-0.2629\ii &  0.9587-0.9404\ii &  0.1717-0.5454\ii \\
 0.8687+0.2795\ii &  0.0565+0.5294\ii &  0.2467+0.6030\ii &  0.0594+0.6913\ii \\
-0.7183+0.8306\ii &  0.1741-0.0283\ii & -0.8307-0.8884\ii &  0.7921+0.2311\ii \\
-0.4544-0.3646\ii & -0.3832+0.7387\ii & -0.0662+0.0687\ii &  0.6902-0.0236\ii \\
 0.6920-0.5542\ii & -0.7215-0.3887\ii & -0.0265+0.0162\ii & -0.3195-0.5390\ii \\
 0.8588+0.6675\ii &  0.5337-0.0519\ii &  0.7599-0.1386\ii &  0.1496-0.8450\ii
\end{array} \right].
$$
Here the complex matrix $A_{st}$ is constructed with singular values $2, 1, 1, 0$. By using Algorithm \ref{Alg2}, all the nonzero singular values of dual complex matrix $A$ are positive dual numbers given by
$$
\Sigma_A = {\rm diag}\left( 2-0.4551\epsilon, 1-0.4524\epsilon, 1+1.9418\epsilon, 0+ 0.9203\epsilon \right).
$$
The corresponding partially unitary matrix $V=V_{st} + V_\I \epsilon \in {\mathbb {DC}}^{6 \times 4}$ is give by
$$ V_{st} = \left[
\begin{array}{rrrr}
-0.2106-0.2339\ii & -0.3265+0.2413\ii & -0.3539+0.2987\ii & -0.4623+0.5243\ii \\
 0.2230-0.1973\ii &  0.5322+0.0760\ii & -0.0513+0.2022\ii & -0.2601+0.1058\ii \\
 0.0612+0.3576\ii &  0.2741-0.2340\ii & -0.4780+0.3717\ii & -0.0490-0.0799\ii \\
 0.3606-0.5315\ii & -0.1493-0.0002\ii & -0.0812-0.2319\ii &  0.3017+0.2590\ii \\
-0.0463+0.1119\ii &  0.1608+0.3877\ii &  0.4053-0.3000\ii & -0.4113+0.0855\ii \\
-0.3980-0.3085\ii &  0.3325-0.3275\ii &  0.2463+0.0019\ii & -0.2901-0.0679\ii
\end{array} \right],
$$
$$ V_{\I} = \left[
\begin{array}{rrrr}
 0.1371-0.6584\ii &  0.2540+0.4767\ii & -0.1966+0.0528\ii &  0.2648-0.0997\ii \\
-0.2880+0.0937\ii & -0.3750-0.4115\ii &  0.6110-0.5756\ii & -0.1252-0.2210\ii \\
 0.0223+0.1212\ii &  0.0878+0.1483\ii &  0.1702+0.9506\ii & -0.3231-0.0264\ii \\
 0.1348+0.3201\ii &  0.8156+0.2973\ii &  0.2300+0.4261\ii &  0.2807+0.1123\ii \\
-0.2812+0.2160\ii &  0.4647+0.4034\ii & -0.1816+0.1141\ii &  0.0689-0.1729\ii \\
 0.1392-0.1706\ii & -0.1497-0.4576\ii &  0.0619+0.2437\ii & -0.2355-0.1253\ii
\end{array} \right]
$$
and the unitary matrix $U=U_{st} + U_\I \epsilon \in {\mathbb {DC}}^{4 \times 4}$ is given by
$$ U_{st} = \left[
\begin{array}{rrrr}
-0.5237           & -0.3744-0.0051\ii &  0.6728+0.0023\ii & -0.3646           \\
 0.4219+0.0715\ii & -0.7301+0.1798\ii &  0.1650+0.1569\ii &  0.4468-0.0091\ii \\
 0.0966-0.3988\ii & -0.4463+0.0870\ii & -0.4784-0.1274\ii & -0.5654+0.2451\ii \\
-0.0912+0.6050\ii & -0.1060+0.2764\ii & -0.3073+0.3948\ii & -0.3288-0.4238
\end{array} \right],
$$
$$ U_{\I} = \left[
\begin{array}{rrrr}
-0.3057-0.1480\ii &  0.0539-0.3954\ii &  0.1731-0.3048\ii &  0.7069-0.1562\ii \\
-0.3237-0.2247\ii & -0.0843+0.3301\ii & -0.3227+0.3169\ii &  0.0851+0.3018\ii \\
-0.0165-0.7156\ii &  0.1991+0.0450\ii &  0.7210-0.4880\ii & -0.2549-0.2455\ii \\
-0.0418-0.4878\ii & -0.3411-0.1952\ii &  0.1419+0.5425\ii & -0.7384+0.2460\ii
\end{array} \right].
$$
\end{Exa}


\begin{Exa}\label{Ex8.3}
We test proposed algorithms on some randomly generated data to illustrate the possible application to brain science. The dual complex matrix $A=A_{st} + A_\I\epsilon \in {\mathbb {DC}}^{10 \times 4}$ is given by
$$ A_{st} = \left[
\begin{array}{rrrr}
 0.6516-0.7586\ii &  0.7909-0.6119\ii & -0.9687-0.2484\ii & -0.8647+0.5022\ii \\
 0.2923+0.9563\ii & -0.9912-0.1326\ii &  0.0899+0.9960\ii & -0.8026+0.5965\ii \\
-0.5078-0.8615\ii & -0.5161+0.8565\ii &  0.7457+0.6663\ii & -0.7211-0.6928\ii \\
 0.7627+0.6467\ii & -0.2377-0.9713\ii &  0.8851-0.4655\ii & -0.9973+0.0734\ii \\
 0.6893+0.7245\ii & -0.5061-0.8625\ii & -0.9634+0.2681\ii & -0.5420-0.8404\ii \\
-0.9960-0.0896\ii &  0.5090+0.8608\ii &  0.7250-0.6887\ii & -0.5013-0.8653\ii \\
-0.9767+0.2148\ii &  0.6086-0.7935\ii & -0.9909+0.1345\ii &  0.7155+0.6986\ii \\
-0.7858+0.6185\ii & -0.0043-1.0000\ii & -0.2393+0.9710\ii & -0.7417-0.6708\ii \\
-0.7479-0.6638\ii & -0.9462-0.3235\ii & -0.8282-0.5605\ii &  0.6520-0.7582\ii \\
-0.9988+0.0497\ii &  0.7748+0.6322\ii & -0.9035+0.4286\ii & -0.9510-0.3092\ii
\end{array} \right]
$$
and
$$ A_{\I} = \left[
\begin{array}{rrrr}
 0.7493-0.4794\ii &  0.8886-0.3328\ii & -0.8709+0.0308\ii & -0.7670+0.7814\ii \\
 0.6452+0.3523\ii & -0.6383-0.7366\ii &  0.4428+0.3919\ii & -0.4497-0.0075\ii \\
-0.2580-0.8536\ii & -0.2663+0.8644\ii &  0.9955+0.6742\ii & -0.4713-0.6850\ii \\
 0.6595+0.8259\ii & -0.3409-0.7922\ii &  0.7819-0.2863\ii & -1.1005+0.2526\ii \\
 1.0198+0.9021\ii & -0.1756-0.6849\ii & -0.6328+0.4457\ii & -0.2114-0.6628\ii \\
-0.9302+0.1061\ii &  0.5748+1.0565\ii &  0.7908-0.4930\ii & -0.4355-0.6696\ii \\
-0.8158+0.1512\ii &  0.7694-0.8571\ii & -0.8300+0.0709\ii &  0.8764+0.6350\ii \\
-0.3431+0.6388\ii &  0.4385-0.9797\ii &  0.2035+0.9913\ii & -0.2989-0.6504\ii \\
-0.2804-0.0873\ii & -0.4786+0.2530\ii & -0.3606+0.0160\ii &  1.1196-0.1817\ii \\
-0.4792-0.1506\ii &  1.2944+0.4319\ii & -0.3839+0.2283\ii & -0.4314-0.5095\ii
\end{array} \right].
$$
Here the phase matrix $A_{st} \in {\mathbb {C}}^{10 \times 4}$ is randomly generated such that the modulus of its elements are equal to 1, and the relative phase matrix $A_\I \in {\mathbb {C}}^{10 \times 4}$ is calculated by subtracting from each element of $A_{st}$ the mean of its row. See \cite{ATWBG06}.  By using Algorithm \ref{Alg1}, all the eigenvalues of dual complex matrix $B=A^*A \in {\mathbb {DC}}^{4 \times 4}$ are positive dual numbers given by
$$
\Sigma_B = {\rm diag}\left( 16.3352+27.3465\epsilon, 12.836+22.9941\epsilon, 7.3681+9.9258\epsilon, 3.4607+4.1092\epsilon \right).
$$
The corresponding unitary matrix $U=U_{st} + U_\I \epsilon \in {\mathbb {DC}}^{4 \times 4}$ is give by
$$ U_{st} = \left[
\begin{array}{rrrr}
 0.6072           & -0.3081           &  0.4805           & -0.5527           \\
-0.4992-0.0776\ii & -0.2581-0.5157\ii & -0.1719+0.0728\ii & -0.5541+0.2654\ii \\
 0.4858-0.1602\ii &  0.2758-0.3732\ii & -0.2524+0.4806\ii &  0.1606+0.4500\ii \\
-0.0270-0.3371\ii & -0.2539+0.5411\ii & -0.3819+0.5419\ii & -0.2202-0.2008\ii
\end{array} \right]
$$
and
$$ U_{\I} = \left[
\begin{array}{rrrr}
-0.0144+0.5457\ii &  0.0625-0.7958\ii &  0.0924-0.0491\ii &  0.0297+0.3729\ii \\
-0.8851+0.3259\ii &  0.2346+0.7871\ii &  0.1595-0.4347\ii & -0.2008-0.0086\ii \\
-0.4833-0.0491\ii &  0.0896-0.3614\ii & -0.9630-0.0557\ii & -0.7932-0.0479\ii \\
 0.0467+0.5327\ii & -0.0852+0.5627\ii &  0.8119+0.2002\ii & -0.5946+0.3713\ii
\end{array} \right].
$$
In the principal component analysis, one can choose the first few columns of $U$ to generate principal components of the data. Besides, we also test the phase matrix $A_{st}$ with scale $20000 \times 500$, the unitary matrix $U$ can be computed within about 4.5 seconds.
\end{Exa}

\begin{Exa}\label{Ex8.4}
The truncated SVD is used to approximate the sample images: ``Peppers" and ``Lena". Both of them are gray-scale images of size $512\times 512$. For these images, we first use the 2-dimensional discrete Fourier transform to generate the complex matrices $A_{st}$ and $A_{\I}$ which make up the dual complex matrix $A=A_{st} + A_{\I}\epsilon \in {\mathbb{DC}^{512\times 512}}$. The low-rank approximation $A_k  \in {\mathbb{DC}^{512\times 512}}$ can be obtained by the truncated SVD with the first $k$ largest singular values. The original images are approximated by applying the 2-dimensional inverse discrete Fourier transform to the standard part and the infinitesimal part of $A_k$, respectively. Given the value of $k$, we have $\| A_k-A \|_F = \sqrt{\sum_{i=k+1}^{512} |\lambda_i|^2}$ and $ \| A\|_F =  \sqrt{\sum_{i=1}^{512} |\lambda_i|^2}$ where $\lambda_1 \geq \lambda_2 \geq  \ldots \geq \lambda_{512}$ are all the singualr values of $A$. Note that $\| A_k-A \|_F$, $\| A\|_F $ and $\| A_k-A \|_F/\|A \|_F$ are dual numbers since $\lambda_i$'s are dual numbers. The relative errors of approximation are reported in Table \ref{Tab1}. The approximated images with $k$-truncated SVD are also presented in Figure \ref{Fig1}.

\begin{table}[H]
\centering
\caption{Relative error of approxiamtion with $k$-truncated SVD} \label{Tab1} \vspace{3mm}
\begin{tabular}{|c|c|c|c|c|c|}\hline
$k$ & 5 & 15 & 25 & 35 & 45   \\\hline
$\frac{\|A_k - A\|_F}{\|A\|_F}$ & 0.2546-0.2304$\epsilon$ & 0.1446-0.1345$\epsilon$ & 0.1052-0.0932$\epsilon$ & 0.0838-0.0752$\epsilon$ & 0.0696-0.0619$\epsilon$ \\\hline
\end{tabular}
\end{table}

\begin{figure}[H]
\centering
\includegraphics[width=0.9\textwidth]{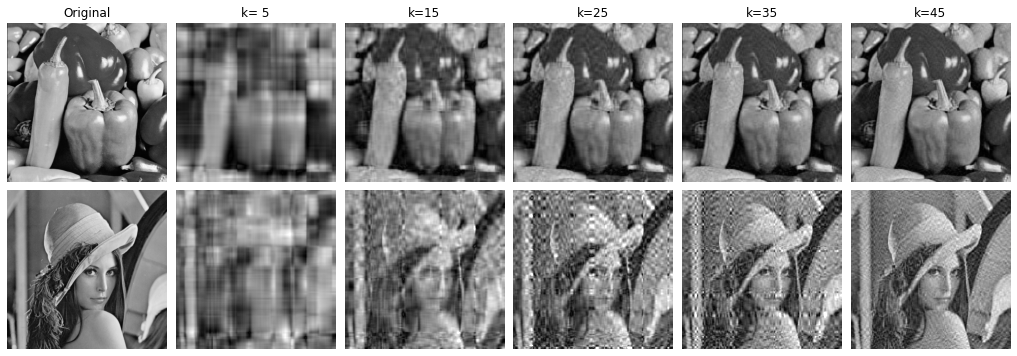}
\caption{Image approximation with $k$-truncated SVD of dual complex matrices}
\label{Fig1}
\end{figure}

\end{Exa}

\bigskip




\end{document}